% !TeX spellcheck = en_US
\documentclass[english]{amsart}

\usepackage{amssymb,amsmath,amsthm,esint,braket,mathtools,enumitem,hyperref,comment,xcolor}

\usepackage{doi}

\usepackage[margin=30mm]{geometry}

%better cross-references
\usepackage[capitalise]{cleveref}

%Nicer font
\usepackage[T1]{fontenc}

%improves spacing
\usepackage{microtype}

% % % % % % % % % % % % % % % % % % % % % % % % % % % % % % % % % % % %
% % SHORCUTS
% % % % % % % % % % % % % % % % % % % % % % % % % % % % % % % % % % % %

%absolute value and norm
\providecommand{\abs}[1]{\left\vert #1 \right\vert}
\providecommand{\norm}[1]{\left\Vert #1 \right\Vert}

%parentheses
\providecommand{\pt}[1]{\left( #1 \right)}
\providecommand{\spt}[1]{\left[ #1 \right]}

%some math symbols
\newcommand{\domega}{{\partial\Omega}}

\newcommand{\RR}{\mathbb R}
\newcommand{\NN}{\mathbb N}

\providecommand{\A}{\mathcal A}
\providecommand{\B}{\mathcal B}
\newcommand{\D}{\,\mathrm d}

\newcommand{\dx}{{\D x}}
\newcommand{\di}{\mathrm{div}}
\newcommand{\ve}{\varepsilon}
\newcommand{\supp}{\mathrm{supp}\,}

\newcommand{\sublim}{\operatornamewithlimits{\longrightarrow}}

%Theorem and related

\newtheorem{theorem}{Theorem}[section]
\newtheorem{proposition}{Proposition}[section]
\newtheorem{lemma}{Lemma}[section]

\newtheorem{remark}{Remark}[section]

%Cross references names
\Crefname{corollary}{Corollary}{Corollaries}
\Crefname{lemma}{Lemma}{Lemmas}
\Crefname{theorem}{Theorem}{Theorems}
\Crefname{proposition}{Proposition}{Propositions}

\begin{document}

\title{Interior regularity of some weighted quasi-linear equations}
\author{Hern{\'a}n Castro}
\email{hcastro@utalca.cl}
\address{Instituto de Matem{\'a}ticas, Universidad de Talca, Casilla 747, Talca, Chile}
\date{January 23, 2025}

\subjclass[2020]{35B45, 35B65, 35J62}

\begin{abstract}
In this article we study the quasi-linear equation
\[
\left\{
\begin{aligned}
\di \A(x,u,\nabla u)&=\B(x,u,\nabla u)&&\text{in }\Omega,\\
u\in H^{1,p}_{loc}&(\Omega;w\dx)
\end{aligned}
\right.
\]
where \(\A\) and \(\B\) are functions satisfying \(\A(x,u,\nabla u)\sim \B(x,u,\nabla u)\sim w(\abs{\nabla u}^{p-2}\nabla u+\abs{u}^{p-2}u)\) for \(p>1\) and a \(p\)-admissible weight function \(w\). We establish interior regularity results of weak solutions and use those results to obtain point-wise asymptotic estimates for solutions to
\[
\left\{
\begin{aligned}
-\di(w\abs{\nabla u}^{p-2}\nabla u)&=w\abs{u}^{q-2}u&&\text{in }\Omega,\\
u\in D^{1,p,w}&(\Omega)
\end{aligned}
\right.
\]
for a critical exponent \(q>p\) in the sense of Sobolev.
\end{abstract}

\maketitle

\section{Introduction}

We are interested in obtaining some qualitative and quantitative properties of weak solutions to the following equation
\begin{equation}\label{bb-eq}
\left\{
\begin{aligned}
-\di \pt{w\abs{\nabla u}^{p-2}\nabla u}&=w\abs{u}^{q-2}u&&\text{in }\Omega\\
u&\in D^{1,p,w}(\Omega),
\end{aligned}
\right.
\end{equation}
for \(q>p>1\) critical for a weighted Sobolev embedding and \(D^{1,p,w}(\Omega)\) a weighted Sobolev space that will be made precise later. The main motivation behind studying this problem comes from the results in \cite{Cas2021} where the existence and non-existence to extremals to a Sobolev inequality with monomial weights was analyzed (see \cite{CR-O2013-2,Cas2016-2}). It is known that extremals to a Sobolev inequality can be viewed as positive solutions to \eqref{bb-eq} for an appropriate weight function \(w\), and our goal is to obtain as much information as possible regarding said extremals and, in general, of solutions to \eqref{bb-eq}.

As we mentioned above, \(w\) will be a weight function throughout this work, meaning a locally Lebesgue integrable non-negative function over an open set \(\Omega\subseteq\RR^N\) satisfying at least the following two conditions: if we abuse the notation and write \(w(B)=\int_B w\dx\) we require that \(w\) satisfies the \emph{doubling property} in \(\Omega\), meaning that there exists a \emph{doubling constant} \(\gamma>0\) such that
\begin{equation}\label{doubling-w}
w(2B)\leq \gamma w(B)
\end{equation}
holds for every (open) ball such that \(2B\subset \Omega\), where \(\rho B\) denotes the ball with the same center as \(B\) but with its radius multiplied by \(\rho>0\). The smallest possible \(\gamma>0\) for which \eqref{doubling-w} holds for every ball will be denoted by \(\gamma_w>0\) from now on. Additionally we will suppose that
\begin{equation}\label{inv-loc-int2}
0<w<\infty\qquad\lambda-\text{almost everywhere}
\end{equation}
where \(\lambda\) denotes the \(N\)-dimensional Lebesgue measure. These two conditions ensure that the measure \(w\dx\) and the Lebesgue measure \(\lambda\) are absolutely continuous with respect to each other.

Equations like \eqref{bb-eq} have been studied in the past. The most studied case is without a doubt the unweighted linear/semi-linear case, meaning \(w=1\) and \(p=2\), but also significant progress has been done for the case \(w=1\) and \(p\neq 2\). The literature is vast in both cases and we do not intend to cover everything that has been done (the interested reader could check \cite{Lions82,dFLN82,GS1981,Serrin1964,CPY2012,DaSc2006,Lin2006,Sci2016,Vet2016,GT01}, a list which is nowhere near exhaustive). Among the previously mentioned results we would like to single out a few that are relevant for this work. Firstly, Serrin \cite{Serrin1964} studied, among other things, the local regularity of solutions to the unweighted quasi-linear equation
\[
\di\A(x,u,\nabla u)=\B(x,u,\nabla u)\qquad\text{in }\Omega\subseteq\RR^N,
\]
where \(\A:\Omega\times\RR\times\RR^N\to\RR^N\) and \(\B:\Omega\times\RR\times\RR^N\to\RR\) are functions satisfying
\begin{gather*}
\label{hyp-2-serrin}\abs{\A(x,u,z)}\leq a\abs{z}^{p-1}+b\abs{u}^{p-1}+e,\\
\label{hyp-3-serrin}\abs{\B(x,u,z)}\leq c\abs{z}^{p-1}+d\abs{u}^{p-1}+f,\\
\label{hyp-1-serrin}\A(x,u,z)\cdot z\geq a^{-1}\abs{z}^p-d\abs{u}^p-g,
\end{gather*}
for a constant \(a>0\) and measurable functions \(b,c,d,e,f,g:\Omega\to [0,\infty)\) satisfying suitable integrability conditions.

Secondly, Cao, Peng and Yan \cite{CPY2012} studied for \(1<p<N\) the equation
\[
\left\{
\begin{aligned}
-\Delta_p u&=\abs{u}^{p^*-2}u+\mu\abs{u}^{p-2}u&&\text{in }\Omega\\
u&=0&&\text{on }\domega,
\end{aligned}
\right.
\]
where \(\Delta_p\) is the \(p\)-Laplace operator and \(p^*=\frac{Np}{N-p}\) is the critical Sobolev exponent. The main result of that work deals with the existence of infinitely many solutions to said equation, however we would like to point out a result from the Appendix of that work. There it is shown that weak solutions to 
\begin{equation}\label{CPY-eq}
\left\{
\begin{aligned}
-\Delta_p u&=\abs{u}^{p^*-2}u&&\text{in }\RR^N\\
u&\in W^{1,p}(\RR^N)
\end{aligned}
\right.
\end{equation}
satisfy the following decay estimate
\[
\abs{u(x)}\leq C\pt{\frac{1}{1+\abs{x}}}^{\frac{N-p}{p-1}-\theta}
\]
for any \(\theta>0\). This result almost captures the behavior satisfied by positive solutions (e.g. extremals \(U(x)\) to the Sobolev inequality): recently it has been shown by \cite{Vet2016,Sci2016} that any positive solution to \eqref{CPY-eq} must be of the form
\[
C_{N,p}\pt{\frac{a^{\frac{1}{p-1}}}{a^\frac{p}{p-1}+\abs{x-x_0}^{\frac{p}{p-1}}}}^{\frac{N-p}{p}}\sim  C\pt{\frac{1}{1+\abs{x}}}^{\frac{N-p}{p-1}}\qquad\text{for large }\abs{x}, 
\]
for some \(a>0\) and \(x_0\in\RR^N\).

The weighted case \(w\neq 1\) has also been studied and there has been an important progress in this situation. As before, we do not plan to give an exhaustive list of such works but the interested reader might want to look at the following list of papers \cite{Muck1972,MuckWhee1974,MS68,Tru1971,FaKeSe1982,Fra1991,Zam2002,Gut1989,Fer2006,ChaWhe1986,MRW2012,MRW2015,Bon2018,Garain2020}. As with the unweighted case we would like to single out a few of the works dealing with weights. One line of progress that we would like to follow regarding weighted quasilinear equations perhaps begins with the works of Muckenhoupt and Wheeden \cite{Muck1972,MuckWhee1974} who characterized the weights \(w\) for which one has the boundedness in the \(L^p(\RR^N,w\dx)\) space (\(1<p<\infty\)) of two important operators: the Hardy's maximal function defined as 
\[
Mf(x)=\sup_{B}\lambda(B)^{-1}\int_{B} f\dx,
\]
where the supremum is taken over all balls containing \(x\), and the singular integral operator
\[
T(f)(x)=\int_{\RR^N}f(y)\abs{x-y}^{1-N}\dx
\]
In \cite{Muck1972} it was shown that Hardy's maximal operator \(M\) is bounded in the weighted \(L^p(\Omega,w\dx)\) if and only if the weight \(w\) belongs to the class \(A_p\), that is if there exists a constant \(C>0\) such that
\begin{equation}\label{Ap-weight}
\pt{\int_{B} w\dx}\pt{\int_{B} w^{-\frac{1}{p-1}}\dx}^{p-1}\leq C\lambda(B)^p
\end{equation}
holds for every ball \(B\). And in \cite{MuckWhee1974} it was proved that the singular integral operator \(T\) is bounded in the weighted \(L^p\) space if \(w\) is in \(A_q\) for some \(q>1\).

From the results of \cite{Muck1972,MuckWhee1974} we can then look at the work of Fabes, Kenig and Serapioni \cite{FaKeSe1982} who obtained local regularity results (for instance a Harnack inequality and the interior Hölder regularity of weak solutions) for the operator \(\di (A(x)\nabla u)\) where \(A(x)\) is a matrix valued function with eigenvalues behaving like \(w(x)\) for some \(w\in A_2\). The main tool used in \cite{FaKeSe1982} is De Giorgi variable truncation method \cite{DeGio1957}, and a rather important insight from \cite{FaKeSe1982} is that for De Giorgi's method to work the key ingredient is having a local Sobolev inequality: there must be constants \(k>1\) and \(C>0\) such that
\begin{equation*}
\pt{\fint_B \abs{u}^{2k}w\dx}^{\frac1{2k}}\leq Cr\pt{\fint_B\abs{\nabla u}^2w\dx}^{\frac12}\qquad \forall u\in C^\infty_c(B),
\end{equation*}
where \(B\) is a ball of radius \(r>0\) and \(\fint_Bfw=w(B)^{-1}\int_Bfw\), inequality that holds true for \(w\in A_2\) and suitable \(k>1\) depending on \(w\).

In order to present the main results of this work we need to make a brief detour and say a few words regarding \emph{weighted Sobolev spaces}. The spaces we will use are defined as subspaces of the following weighted Lebesgue space
\[
L^{p,w}(\Omega)=\set{u:\Omega\to\RR \text{ measurable}: \int_{\Omega}\abs{u}^pw\dx<\infty}
\]
equipped with the norm
\[
\norm{u}_{p,w}^p=\int_{\Omega}\abs{u}^pw\dx.
\]

In order to work with weighted Sobolev spaces one needs to establish some conditions over the weights so that the corresponding spaces have sufficient structural properties. In addition to the basic conditions \eqref{doubling-w} and \eqref{inv-loc-int2}, we will suppose that the weight \(w\) satisfies a Poincaré inequality: it is known (see for instance \cite[Chapter 20]{HeKiMa2006}) that if \(w\) is a doubling weight satisfying 
\begin{enumerate}[label={(P{\scriptsize\Roman*})}]
\item\label{weighted-poincare}  \emph{Local weighted \((1,p)\)-Poincaré inequality}: There exists \(\rho\geq 1\) such that if \(u\in C^1(\Omega)\) then for all balls \(B\subset\Omega\) of radius \(l(B)\) one has
\[
\frac{1}{w(B)}\int_B\abs{u-u_{B,w}}w\dx\leq C_1l(B)\pt{\frac{1}{w(\rho B)}\int_{\rho B}\abs{\nabla u}^pw\dx}^{\frac1p}
\]
where \(w(B)=\int_B w\dx\) and 
\begin{equation*}
u_{B,w}=\frac{1}{w(B)}\int_B uw\D x
\end{equation*}
is the weighted average of \(u\) over \(B\),
\end{enumerate}
then \(w\) is automatically \emph{\(p\)-admissible}, that is, it also satisfies the following properties
\begin{enumerate}[resume*]
\item\label{uniq-grad} \emph{Uniqueness of the gradient}: If \((u_n)_{n\in\NN}\subseteq C^1(\Omega)\) satisfy 
\[
\int_\Omega\abs{u_n}^pw\dx\sublim\limits_{n\to\infty} 0\quad\text{and}\quad\int_{\Omega}\abs{\nabla u_n-v}^pw\dx\sublim\limits_{n\to\infty} 0
\]
for some \(v:\Omega\to \RR^N\), then \(v=0\),

\item \emph{Local Poincaré-Sobolev inequality}: There exist constants \(C_3>0\) and \(\chi>1\) such that for all balls \(B\subset\Omega\) one has
\[
\pt{\frac{1}{w(B)}\int_B\abs{u-u_{B,w}}^{\chi p}w\dx}^{\frac1{\chi p}}\leq C_2l(B)\pt{\frac1{w(B)}\int_B \abs{\nabla u}^pw\dx}^{\frac 1p}
\]
for bounded \(u\in C^1(B)\),

\item \label{sob-ineq}
\emph{Local Sobolev inequality}: There exist constants \(C_2>0\) and \(\chi>1\) (same as above) such that for all balls \(B\subset\Omega\) one has
\[
\pt{\frac{1}{w(B)}\int_B\abs{u}^{\chi p}w\dx}^{\frac1{\chi p}}\leq C_2l(B)\pt{\frac1{w(B)}\int_B \abs{\nabla u}^pw\dx}^{\frac 1p}
\]
for \(u\in C^1_c(B)\).

\end{enumerate}

\begin{remark}\label{local-sobolev-remark}
Suppose for a moment that the weight \(w\) verifies the following: there exist constants \(C,D>0\) such that 
\begin{equation}\label{w-ball-estimate}
\frac{w(B_R(y))}{w(B_r(x))}\leq C\pt{\frac{R}r}^{D},\qquad \text{for all }0<r\leq R<\infty \text{ with }B_r(x)\subseteq B_R(y)\subseteq\Omega.
\end{equation}
It follows from \cite[Theorem 5.1 and Corollary 9.8]{HaKo2000} and \cite[Theorem 4]{AlGoHa2020} that
\begin{equation}\label{local-Poincare}
\pt{\frac{1}{w(B)}\int_B\abs{u-u_{B,w}}^{\frac{Dp}{D-p}}w\dx}^{\frac{D-p}{Dp}}\leq C_2l(B)\pt{\frac1{w(B)}\int_B \abs{\nabla u}^pw\dx}^{\frac 1p}\quad\forall\, u \in C^\infty(B)
\end{equation}
and as a consequence we also have
\begin{equation}\label{local-Sobolev}
\pt{\frac{1}{w(B)}\int_B\abs{u}^{\frac{Dp}{D-p}}w\dx}^{\frac{D-p}{Dp}}\leq C_2l(B)\pt{\frac1{w(B)}\int_B \abs{\nabla u}^pw\dx}^{\frac 1p}\quad\forall\, u \in C^\infty_c(B).
\end{equation}

On the one hand, observe that this is precisely what happens for the \(N\)-dimensional Lebesgue measure \(\lambda\) as one can see that it satisfies
\[
\frac{\lambda(B_R)}{\lambda(B_r)}=\pt{\frac{R}{r}}^N,
\]
so in the unweighted case one has \(D=N\) and the classical local Poincaré-Sobolev and local Sobolev inequalities are readily recovered.

On the other hand, \eqref{w-ball-estimate} is automatically satisfied for any doubling weight \(w\) because one can iterate \eqref{doubling-w} to obtain
\begin{equation*}
\frac{w(B_R)}{w(B_r)}\leq C\pt{\frac{R}r}^{\log_2 \gamma_w}.
\end{equation*}

With the above observation in mind it is appropriate to denote by \(D_w=\log_2\gamma_w\) and to call \(D_w\) the \emph{dimension of the weight} \(w\) (this is related to the Assouad dimension of the measure \(w\dx\), see \cite{Assouad1979,FraHow2020,Hei2001}). Also we will use the notation \(\chi_w=\frac{D_w}{D_w-p}\).
\end{remark}

As mentioned at the beginning, the above properties are useful in the definition of weighted Sobolev spaces: for an open set \(\Omega\subseteq\RR^N\) we define the weighted Sobolev space \(H^{1,p,w}(\Omega)\)
\begin{equation}\label{def-H}
H^{1,p,w}(\Omega)=\text{the completion of }\set{u\in C^1(\Omega): u,\frac{\partial u}{\partial x_i}\in L^{p,w}(\Omega) \text{ for all }i}
\end{equation}
equipped with the norm
\begin{equation}\label{sobolev-norm}
\norm{u}_{1,p,w}^p=\norm{u}_{p,w}^p+\sum_{i=1}^N\norm{\frac{\partial u}{\partial x_i}}_{p,w}^p.
\end{equation}
Observe that property \ref{uniq-grad} guarantees that functions in this space have a unique gradient (see \cite[Section 1.9]{HeKiMa2006}).

Having established the ambient space we can now give the main results of this article. As we mentioned before the goal of this work is to obtain qualitative and quantitative properties of weak solutions to \eqref{bb-eq}. To do so we first study the local regularity of weak solutions the following quasi-linear problem
\begin{equation}\label{model-eq}
\left\{
\begin{aligned}
\di \A(x,u,\nabla u)&=\B(x,u,\nabla u),&&\text{in }\Omega\subseteq\RR^N\\
u&\in H^{1,p,w}_{loc}(\Omega),
\end{aligned}
\right.
\end{equation}
where \(\A:\Omega\times\RR\times\RR^N\to\RR^N\) and \(\B:\Omega\times\RR\times\RR^N\to \RR\) are functions verifying the Serrin-like conditions
\begin{gather}
\label{hyp-A1}\tag{H1}
\A(x,u,z)\cdot z\geq w(x)\pt{\abs{z}^p-d\abs{u}^p-g},\\
\label{hyp-A2}\tag{H2}
\abs{\A(x,u,z)}\leq w(x)\pt{\abs{z}^{p-1}+b\abs{u}^{p-1}+e},\\
\label{hyp-B}\tag{H3}
\abs{\B(x,u,z)}\leq w(x)\pt{c\abs{z}^{p-1}+d\abs{u}^{p-1}+f},
\end{gather}
for measurable functions \(b,c,d,e,f,g:\Omega\to\RR^+\) satisfying suitable integrability conditions that will vary from theorem to theorem. With the above into consideration, throughout the rest of this article the function \(w\) will be a non-negative locally integrable weight functions satisfying \eqref{doubling-w}, \eqref{inv-loc-int2} and the local weighted \((1,p)\)-Poincaré inequality \ref{weighted-poincare}.

The first result deals with the local boundedness of weak solutions, namely we have

\begin{theorem}\label{thm-local-bdd}
Suppose that \(1<p<D_w\) where \(D_w\) is the dimension of the weight \(w\) defined at \cref{local-sobolev-remark}, additionally suppose that for \(0<\ve<1\) one has 
\begin{equation}\label{struc-hyp}
b,e\in L^{\frac{D_w}{p-1},w}(\Omega),\quad
c\in L^{\frac{D_w}{1-\ve},w}(\Omega),\quad\text{and}\quad
d,f,g\in L^{\frac{D_w}{p-\ve},w}(\Omega).
\end{equation}
For fixed \(x_0\in \Omega\) and \(R>0\) such that \(B_{2R}(x_0)\subset\Omega\) suppose \(u\in H^{1,p,w}_{loc}\) is a local weak solution to
\[
\di\A(x,u,\nabla u)=\B(x,u,\nabla u) \qquad\text{in }B_{2R}(x_0)
\]
then
\[
\norm{u}_{L^{\infty}(B_R(x_0))}\leq C_R\spt{\pt{\fint_{B_{2R}(x_0)}\abs{u}^pw\dx}^{\frac1p}+k_R},
\]
where \(C_R>0\) depends on \(\ve,\ D_w,\ N, a, p\) and the quantities 
\begin{gather*}
b_R:=R^{p-1}\pt{\fint_{B_{2R(x_0)}}\abs{b}^{\frac{D_w}{p-1}}w}^{\frac{p-1}{D_w}},\quad c_R:=R\pt{\fint_{B_{2R(x_0)}}\abs{c}^{\frac{D_w}{1-\ve}}w}^{\frac{1-\ve}{D_w}}, \\ d_R:=R^{p}\pt{\fint_{B_{2R(x_0)}}\abs{d}^{\frac{D_w}{p-\ve}}w}^{\frac{p-\ve}{D_w}},\quad 
e_R:=R^{p-1}\pt{\fint_{B_{2R(x_0)}}\abs{e}^{\frac{D_w}{p-1}}w}^{\frac{p-1}{D_w}},\\
f_R:=R^p\pt{\fint_{B_{2R(x_0)}}\abs{f}^{\frac{D_w}{p-\ve}}w}^{\frac{p-\ve}{D_w}},\quad
g_R:=R^p\pt{\fint_{B_{2R(x_0)}}\abs{g}^{\frac{D_w}{p-\ve}}w}^{\frac{p-\ve}{D_w}}.
\end{gather*}
The constant \(k_R\) is defined by
\[
k_R=\pt{e_R+f_R}^{\frac{1}{p-1}}+g_R^{\frac1p}.
\]
\end{theorem}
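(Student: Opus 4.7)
The proof follows the classical Moser iteration scheme adapted to the weighted setting, in the spirit of Serrin's treatment of the unweighted case. The decisive tool is the local Sobolev inequality \ref{sob-ineq} with exponent $\chi_w=D_w/(D_w-p)>1$, available thanks to the hypothesis $p<D_w$ and the $p$-admissibility of $w$ recorded in \cref{local-sobolev-remark}. It suffices to prove the estimate separately for $u_+=\max(u,0)$ and $u_-=\max(-u,0)$, so I assume $u\geq 0$ is a weak subsolution and write $v=u+k_R$.

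\textbf{Caccioppoli estimate.} For a cutoff $\eta\in C^\infty_c(B_{2R})$ and a parameter $\beta\geq 0$, I would test against $\varphi=\eta^p v^{\beta(p-1)+1}$. Expanding $\int \A(x,u,\nabla u)\cdot\nabla\varphi\,\dx=-\int \B(x,u,\nabla u)\varphi\,\dx$ and invoking \eqref{hyp-A1}--\eqref{hyp-B}, with Young's inequality used to absorb the good gradient contribution coming from \eqref{hyp-A1} back onto the left-hand side, yields an estimate of the form
\[
\int \eta^p v^{\beta(p-1)}\abs{\nabla v}^p w\dx\leq C(\beta)\int \abs{\nabla \eta}^p v^{\beta(p-1)+p}w\dx+(\text{perturbative terms}).
\]
The shift by $k_R$ is essential: because $k_R^{p-1}\gtrsim e_R+f_R$ and $k_R^p\gtrsim g_R$, the inhomogeneous coefficients $e,f,g$ are absorbed into powers of $v$ and do not generate independent additive contributions at each iteration step.

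\textbf{Reverse Hölder and Moser iteration.} Setting $\gamma=\beta(p-1)+p$, the function $\eta v^{\gamma/p}$ is admissible in \ref{sob-ineq}. Combining with the Caccioppoli estimate yields, on a slightly smaller concentric ball,
\[
\pt{\fint v^{\gamma\chi_w} w\dx}^{1/(\gamma\chi_w)}\leq C(\gamma)^{1/\gamma}\pt{\fint v^{\gamma}w\dx}^{1/\gamma},
\]
with $C(\gamma)$ depending polynomially on $\gamma$ and on the bounded quantities $b_R,c_R,d_R$. Iterating with $\gamma_n=p\chi_w^n$ on a geometric sequence of radii between $R$ and $2R$, the product of constants $\prod_n C(\gamma_n)^{1/\gamma_n}$ converges because $\gamma_n$ grows geometrically while $C(\gamma_n)$ only polynomially; passing $n\to\infty$ one arrives at
\[
\norm{v}_{L^\infty(B_R)}\leq C\pt{\fint_{B_{2R}}v^p w\dx}^{1/p},
\]
which, after unwinding $v=u+k_R$, is the claimed bound.

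\textbf{Main obstacle.} The technical heart of the argument is the treatment of the perturbative terms involving $c,d,f,g$. After applying Young's inequality to the mixed terms (e.g.\ $c\abs{\nabla v}^{p-1}v^{\gamma-p+1}$ splits into an absorbable gradient piece and a term of the form $c^p v^\gamma$) and then Hölder, one finds that the critical integrability exponents required are precisely $L^{D_w,w}$ for $c$ and $L^{D_w/p,w}$ for $d,f,g$; the hypothesis \eqref{struc-hyp} strengthens these by the margin $\ve>0$, which after Hölder produces a factor $w(B_{2R})^{\ve/D_w}$ (going to $0$ as $R$ shrinks) or equivalently an exponent on $v$ strictly below $\gamma\chi_w$, precisely what is needed to close the absorption and keep $C(\gamma)$ growing only polynomially. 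Tracking this bookkeeping across the iteration, while standard in spirit, is where the argument becomes delicate.
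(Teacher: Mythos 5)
Your scheme is the same as the paper's (Serrin-style Moser iteration driven by the local Sobolev inequality \eqref{local-Sobolev}, with the coefficients $e,f,g$ absorbed by the shift $\bar u=\abs{u}+k$ and the $\ve$-margin in \eqref{struc-hyp} keeping the reverse-H\"older constant polynomial in $\gamma$ so the product over the iteration converges). However, there is one genuine gap: the test function $\varphi=\eta^p v^{\beta(p-1)+1}$ is not admissible a priori. The conclusion of the theorem is precisely that $u$ is locally bounded, so at the start of the argument you only know $u\in H^{1,p,w}_{loc}$, and $v^{\beta(p-1)+1}$ (a superlinear power of $u$) need not belong to the weighted Sobolev space; the pairing $\int\B\varphi$ need not even be finite. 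The paper circumvents this with the truncations $F_{\alpha,k,l}$ and $G_{\alpha,k,l}$ of \eqref{func-F}--\eqref{func-G}, which are \emph{linear} for arguments larger than $l$, so that $\varphi=\eta^pG(u)$ is a legitimate test function for every finite $l$; the power-function estimates are recovered only after letting $l\to\infty$ at the end of each iteration step, using that all bounds are uniform in $l$. Your proof needs this (or an equivalent truncation/approximation) to be rigorous.

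A second, more minor point: reducing to $u\geq 0$ by treating $u_+$ and $u_-$ separately requires checking that each is a subsolution of an equation with the same structure \eqref{hyp-A1}--\eqref{hyp-B}, which is not immediate for a general quasilinear pair $(\A,\B)$. The paper avoids this entirely by building $\mathrm{sign}(u)$ into $G$, so the signed solution is handled in one pass. Everything else --- the Caccioppoli step, the role of $k_R$, the identification of the critical exponents $L^{D_w,w}$ for $c$ and $L^{D_w/p,w}$ for $d,f,g$, and the convergence of the iteration --- matches the paper's argument.
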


The above result requires that \(\ve>0\) in \eqref{struc-hyp}. If we consider the case  \(\ve=0\) we no longer have local boundedness, but we do obtain that \(u\) is \(L^{s,w}\) integrable for all \(s<\infty\) as the following results shows

\begin{theorem}\label{thm-local-inte}
Let \(1<p<D_w\) where \(D_w\) is the dimension of the weight \(w\) defined at \cref{local-sobolev-remark}. Suppose that one has 
\begin{equation}\label{struc-hyp2}
b,e\in L^{\frac{D_w}{p-1},w}(\Omega),\quad
c\in L^{D_w,w}(\Omega),\quad\text{and}\quad
d,f,g\in L^{\frac{D_w}{p},w}(\Omega).
\end{equation}
For fixed \(x_0\in \Omega\) and \(R>0\) such that \(B_{2R}(x_0)\subset\Omega\) suppose \(u\in H^{1,p,w}_{loc}\) is a local weak solution to
\[
\di\A(x,u,\nabla u)=\B(x,u,\nabla u) \qquad\text{in }B_{2R}(x_0)
\]
then for all \(1\leq s<\infty\) we have
\[
\pt{\fint_{B_{R}(x_0)}\abs{u}^{s}w\dx}^{\frac1s}\leq C_{R,s}\spt{\pt{\fint_{B_{2R}(x_0)}\abs{u}^pw\dx}^{\frac1p}+k_{R}},
\]
for some constant \(C_{R,s}>0\) depending on \(s,\ D_w,\ N\) and the structure of \(\A\) and \(\B\), namely \(p,\ a>0\) and the quantities 
\begin{gather*}
b_R:=R^{p-1}\pt{\fint_{B_{2R(x_0)}}\abs{b}^{\frac{D_w}{p-1}}w}^{\frac{p-1}{D_w}},\quad c_R:=R\pt{\fint_{B_{2R(x_0)}}\abs{c}^{D_w}w}^{\frac{1}{D_w}},\\
d_R:=R^{p}\pt{\fint_{B_{2R(x_0)}}\abs{d}^{\frac{D_w}{p}}w}^{\frac{p}{D_w}},\quad
e_R:=R^{p-1}\pt{\fint_{B_{2R(x_0)}}\abs{e}^{\frac{D_w}{p-1}}w}^{\frac{p-1}{D_w}},\\
f_R:=R^p\pt{\fint_{B_{2R(x_0)}}\abs{f}^{\frac{D_w}{p}}w}^{\frac{p}{D_w}},\quad
g_R:=R^p\pt{\fint_{B_{2R(x_0)}}\abs{g}^{\frac{D_w}{p}}w}^{\frac{p}{D_w}}.
\end{gather*}
The value of \(k_R\) is 
\[
k_R=\pt{e_R+f_R}^{\frac{1}{p-1}}+g_R^{\frac1p}.
\]
\end{theorem}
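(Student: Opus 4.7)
The plan is to carry out a Moser-type iteration that mirrors the proof of \cref{thm-local-bdd}, paying close attention to the $\alpha$-dependence of every constant. The passage from $L^\infty$ down to merely $L^s$ for every finite $s$ reflects exactly the loss of the subcritical gap $\varepsilon>0$: under \eqref{struc-hyp} one earns at each Moser step a geometric factor that makes the infinite product of iteration constants converge, while under \eqref{struc-hyp2} each step is still finite but the infinite product may diverge, so the iteration must be stopped at a finite stage.

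For $\alpha\geq p$ and a cutoff $\eta\in C_c^\infty(B_{r_2}(x_0))$ with $\eta\equiv 1$ on $B_{r_1}(x_0)$ and $\abs{\nabla\eta}\leq 2/(r_2-r_1)$, the first step would be to test the weak formulation against $\varphi=\eta^p u(T_M\abs{u})^{\alpha-p}$, where the truncation at level $M$ ensures admissibility and is removed at the end by monotone convergence. Using \eqref{hyp-A1} to extract the coercive term $\abs{\nabla u}^p$, using \eqref{hyp-A2} and \eqref{hyp-B} together with Young's inequality to absorb the remaining gradient contributions, and finally applying Hölder's inequality with the exponents dictated by \eqref{struc-hyp2} on the lower-order terms, I would obtain a Caccioppoli-type estimate
\[
\int_{B_{r_2}}w\,\abs{\nabla(\abs{u}^{\alpha/p}\eta)}^p\dx\leq\frac{C(\alpha)}{(r_2-r_1)^p}\int_{B_{r_2}}w\pt{\abs{u}^\alpha+k_R^\alpha}\dx,
\]
with $C(\alpha)$ polynomial in $\alpha$ and depending on $b_R,\ldots,g_R$. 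Coupling this to the weighted Sobolev inequality \eqref{local-Sobolev} from \cref{local-sobolev-remark} and raising to the $1/\alpha$ power produces the reverse-Hölder step
\[
\pt{\fint_{B_{r_1}}\abs{u}^{\chi_w\alpha}w\dx}^{\frac{1}{\chi_w\alpha}}\leq\spt{\frac{\widetilde C(\alpha)\,R^p}{(r_2-r_1)^p}}^{\frac{1}{\alpha}}\pt{\fint_{B_{r_2}}\pt{\abs{u}^\alpha+k_R^\alpha}w\dx}^{\frac{1}{\alpha}}.
\]
Iterating along the geometric sequence $\alpha_j=\chi_w^j p$ with radii $r_j=R(1+2^{1-j})$ for $j=0,1,\ldots,n$, yields a bound of the form
\[
\pt{\fint_{B_R}\abs{u}^{\chi_w^{n+1}p}w\dx}^{\frac{1}{\chi_w^{n+1}p}}\leq K_n\spt{\pt{\fint_{B_{2R}}\abs{u}^pw\dx}^{\frac{1}{p}}+k_R},
\]
with $K_n$ finite but possibly growing in $n$. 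Given $s\geq 1$, it suffices to choose $n$ so that $\chi_w^{n+1}p\geq s$ and invoke Jensen's inequality.

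The main obstacle, and the reason for the weaker conclusion compared to \cref{thm-local-bdd}, is precisely the behavior of $K_n$ as $n\to\infty$: under \eqref{struc-hyp} each Moser step produces a factor of the form $\theta^{1/\alpha_j}$ with $\theta<1$ arising from the $\varepsilon$-gap between the integrability exponents of $c,d,f,g$ and the critical ones, so that the infinite product remains bounded and the limit is $L^\infty$; under \eqref{struc-hyp2} this geometric gain vanishes and one can only control $K_n$ for each finite $n$. A further delicate point is checking that Young's and Hölder's inequalities close at the exact exponents prescribed by \eqref{struc-hyp2}: these are designed so that the scale-invariant quantities $b_R,\ldots,g_R$ are precisely those dual to the energy spaces $L^{\chi_w p,w}$ and $L^{p,w}(\cdot,w\dx)$ for $\nabla u$.
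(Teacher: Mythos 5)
Your overall architecture (test with a truncated power of $u$, derive a Caccioppoli/reverse-H\"older step, couple with \eqref{local-Sobolev}, iterate along $\alpha_j=\chi_w^jp$ finitely many times and finish by interpolation) matches the paper's. But there is a genuine gap at the single point where this theorem actually differs from \cref{thm-local-bdd}: you never explain how the terms involving $c$ and $d$ close at the \emph{critical} exponents $c\in L^{D_w,w}$, $d\in L^{D_w/p,w}$. If you apply H\"older with these exponents to $\fint c\,v\eta\,\abs{\eta\nabla v}^{p-1}w$ and $\fint \bar d\,\abs{v\eta}^pw$ and then use the Sobolev inequality on the resulting $\pt{\fint\abs{v\eta}^{\chi_wp}w}^{1/(\chi_wp)}$ factor, you produce terms of the \emph{same order} as the left-hand side, namely $\norm{c}_{L^{D_w,w}(B_{2R})}\pt{\fint\abs{\nabla(\eta v)}^pw}$-type contributions whose coefficients are the full norms of $c$ and $d$ on the ball --- not small, hence not absorbable. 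Your claimed Caccioppoli estimate, with only $\abs{u}^\alpha+k_R^\alpha$ on the right, does not follow from ``H\"older with the exponents dictated by \eqref{struc-hyp2}.'' The paper's fix is to split $c=c\mathbf{1}_{\{c\le M\}}+c\mathbf{1}_{\{c>M\}}$ (and likewise $\bar d$) and invoke absolute continuity of the integral: the tail $\pt{w(B_2)^{-1}\int_{\{c>M\}}c^{D_w}w}^{1/D_w}$ can be made $\le\delta$ for $M$ large, so the top-order contribution is absorbed, while the bounded part contributes only a lower-order term with constant $M$.

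This omission also produces an internal inconsistency in your write-up. You assert $C(\alpha)$ is \emph{polynomial} in $\alpha$; if that were true, $\prod_j C(\alpha_j)^{1/\alpha_j}$ would converge (since $\sum_j\chi_w^{-j}\log\chi_w^{j}<\infty$) and the iteration would deliver $L^\infty$, contradicting the very distinction you draw between \eqref{struc-hyp} and \eqref{struc-hyp2}. The correct mechanism is that $\delta$ must be taken small depending on $\alpha$ (to beat the $\alpha^{p-1}$ factors), which forces $M=M(\alpha)$ and makes $C_\alpha$ non-explicit and uncontrolled as $\alpha\to\infty$ --- exactly why the iteration must stop at a finite stage and one only obtains $L^{s,w}$ for each finite $s$. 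You correctly diagnose \emph{that} the infinite product may diverge, but without the truncation argument you have neither a valid single step nor the right reason for the divergence.
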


If we keep relaxing the integrability conditions on the structural parameters \(b,c,\ldots, g\) we still are able to obtain some integrability of \(u\) as the following result shows

\begin{theorem}\label{thm-local-lower}
Let \(1<p<D_w\) where \(D_w\) is the dimension of the weight \(w\) defined at \cref{local-sobolev-remark}. Suppose that one has 
\begin{equation}\label{struc-hyp1'}
\begin{aligned}
b\in L^{\frac{D_w}{p-1},w}(\Omega),\quad
c\in L^{D_w,w}(\Omega),\quad
d\in L^{\frac{D_w}{p},w}(\Omega),\\
e\in L^{\frac{D_wr}{D_w-r},w}(\Omega),\quad
f\in L^{r,w}(\Omega),\quad\text{and}\quad
g\in L^{t,w}(\Omega),
\end{aligned}
\end{equation}
where \(r,t\) verify
\[
\frac{1}{p-1}\pt{\frac{1}{r}-\frac{p}{D_w}}=\frac{1}{p}\pt{\frac{1}{t}-\frac{p}{D_w}}=\frac1{s}
\]
for some \(s\geq \frac{D_wp}{D_w-p}\). For fixed \(x_0\in \Omega\) and \(R>0\) such that \(B_{2R}(x_0)\subset\Omega\) suppose \(u\in H^{1,p,w}_{loc}\) is a local weak solution to
\[
\di\A(x,u,\nabla u)=\B(x,u,\nabla u) \qquad\text{in }B_{2R}(x_0)
\]
then we have
\[
\pt{\fint_{B_{R}(x_0)}\abs{u}^{s}w\dx}^{\frac1s}\leq C_{R,s}\spt{\pt{\fint_{B_{2R}(x_0)}\abs{u}^pw\dx}^{\frac1p}+k_{R}},
\]
for some constant \(C_{R,s}>0\) depending on \(s,\ D_w,\ N\) and the structure of \(\A\) and \(\B\), namely \(p,\ a\) and the quantities 
\begin{gather*}
b_R:=R^{p-1}\pt{\fint_{B_{2R(x_0)}}\abs{b}^{\frac{D_w}{p-1}}w}^{\frac{p-1}{D_w}},\quad c_R:=R\pt{\fint_{B_{2R(x_0)}}\abs{c}^{D_w}w}^{\frac{1}{D_w}}, \\ d_R:=R^{p}\pt{\fint_{B_{2R(x_0)}}\abs{d}^{\frac{D_w}{p}}w}^{\frac{p+\ve}{D_w}}. 
\end{gather*}
The value of \(k_R\) is 
\[
k_R=\pt{e_R+f_R}^{\frac{1}{p-1}}+g_R^{\frac1p},
\]
where
\begin{gather*}
e_R:=R^{p-1}\pt{\fint_{B_{2R(x_0)}}\abs{e}^{\frac{D_wr}{D_w-r}}w}^{\frac{1}{\chi_w r}},\quad
f_R:=R^{p}\pt{\fint_{B_{2R(x_0)}}\abs{f}^{r}w}^{\frac{1}{r}},\\
g_R:=R^{p}\pt{\fint_{B_{2R(x_0)}}\abs{g}^{t}w}^{\frac{1}{t}}.
\end{gather*}
\end{theorem}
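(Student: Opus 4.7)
The plan is to extend the Moser iteration scheme behind \cref{thm-local-inte} to accommodate the weaker integrability of $e$, $f$, $g$ in \eqref{struc-hyp1'}. Since $b$, $c$, $d$ retain the same integrabilities as in \cref{thm-local-inte}, the structural Caccioppoli--Sobolev portion of that proof carries over unchanged; only the Hölder applications to the inhomogeneous terms need recalibration, and the exponent hypotheses on $r$, $t$, $s$ are arranged exactly to make the estimate close at level $L^{s,w}$.

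Concretely, I would test the weak formulation with $\eta=\phi^p\,u\,u_M^{q-p}$, where $u_M=\min(\abs u,M)$, $\phi$ is a smooth cutoff with $\phi\equiv 1$ on $B_R(x_0)$ and $\supp\phi\subset B_{2R}(x_0)$, and $q\geq p$ is the running iteration exponent. Expanding $\nabla\eta$, invoking \eqref{hyp-A1}--\eqref{hyp-B}, absorbing the gradient crossed with $b$ and $c$ by Young's inequality, and sending $M\to\infty$ yields the standard Caccioppoli estimate. Rewriting in terms of $v=\abs u^{q/p}$ and applying the local Sobolev inequality \eqref{local-Sobolev} to $\phi v$ converts the left side to $\pt{\int w(\phi\abs u)^{q\chi_w}}^{1/\chi_w}$. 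The $b$, $c$, $d$ pieces on the right contribute $b_R$, $c_R$, $d_R$ by the same Hölder bounds as in \cref{thm-local-inte}.

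The new ingredient is the Hölder treatment of the inhomogeneous terms. For $\int w\,f\abs u^{q-1}\phi^p$, Hölder in $L^{r,w}$ against its conjugate produces a factor $\norm{f}_{r,w}$ times a power of a weighted $L^{(q-1)r',w}$ norm of $\phi\abs u$; analogous splittings handle $e$ through $L^{D_wr/(D_w-r),w}$ and $g$ through $L^{t,w}$ paired with $\abs u^{q-p}$. The assumed relations
\[
\frac{1}{r}-\frac{p}{D_w}=\frac{p-1}{s},\qquad \frac{1}{t}-\frac{p}{D_w}=\frac{p}{s}
\]
are precisely the arithmetic that, at the critical level $q=s$, aligns the Hölder output exponent with the Sobolev output $s\chi_w$. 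A further Young's inequality absorbs the resulting $\norm{\phi\abs u}_{s\chi_w,w}$ factor back into the left side, leaving on the right a power of $\norm{u}_{s,w}$ on $B_{2R}(x_0)$, which is controlled by the preceding iteration step.

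Starting from the a~priori $L^{p,w}_{loc}$ integrability of $u$ and iterating $q_0=p$, $q_{k+1}=q_k\chi_w$, finitely many cycles suffice to reach $q=s$ because $\chi_w>1$ and $s\geq p\chi_w$; the accumulated constants form a convergent geometric product as in standard Moser iteration. The chief technical obstacle is the bookkeeping at the final level: the balance relations on $r$ and $t$ must be verified to give exactly the Hölder identities that close the estimate at $L^{s,w}$, and the dependence of the final constant on $b_R,\ldots,g_R$ must be tracked carefully through the iteration so that only the quantities displayed in the statement appear.
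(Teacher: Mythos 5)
Your strategy matches the paper's: replace the absorption of $e,f,g$ into $\bar b,\bar d$ (which is what Theorems \ref{thm-local-bdd} and \ref{thm-local-inte} do via the shift $\bar u=|u|+k$) by treating the inhomogeneous terms directly with H\"older estimates whose conjugate exponents depend on the iteration level, reusing the $b,c,d$ treatment of \cref{thm-local-inte} verbatim, and running finitely many Moser steps until the exponent reaches $s$. The paper uses the truncated pair $F,G$ of \eqref{func-F}--\eqref{func-G} with $\bar u=|u|$; your test function $\phi^p u\, u_M^{q-p}$ with $u_M=\min(|u|,M)$ is the same device in different notation, and your observation that the arithmetic $\frac{1}{p-1}(\frac1r-\frac{p}{D_w})=\frac{1}{p}(\frac1t-\frac{p}{D_w})=\frac1s$ is exactly what lets the iteration run up to level $s$ is the right one.

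There is, however, a computational slip in your H\"older step that you should repair before carrying it through: with $\varphi=\phi^p u\,u_M^{q-p}$ the $f$-term in $\B\varphi$ produces $\int_{B_{2R}} f\,\phi^p\,|u|^{q-p+1}\,w\,\dx$, not $\int f\,\phi^p\,|u|^{q-1}\,w\,\dx$. In the paper's normalization with $\alpha=q/p$ and $\beta=1+p(\alpha-1)$, the power on $|u|$ is $\beta=q-p+1$, and this is what makes the balance close: the conjugate H\"older exponent must satisfy $(q-p+1)r'\le q\chi_w$, and this is equivalent to $q\chi_w\le s$ precisely under the displayed relation for $r$. With $|u|^{q-1}$ instead, the condition $(q-1)r'\le q\chi_w$ fails for $p>2$, so the argument as written does not close in general. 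A related imprecision: Young's inequality does not leave a power of $\norm{u}_{s,w}$ on the right-hand side; it leaves a power of $\norm{f}_{r,w}$ (respectively $\norm{e}$, $\norm{g}$), which feed into $k_R$; the $\norm{u}_{L^q}$ contribution comes instead from the $v\nabla\phi$ term and is what the previous iteration step controls. Finally, since the iteration terminates after finitely many steps, there is no infinite product of constants to worry about (and indeed, as noted in the paper, the $C_\alpha$ here are not uniformly bounded in $\alpha$, which is exactly why boundedness requires $\ve>0$ in \cref{thm-local-bdd}); you just need a finite product. With the exponent $q-p+1$ restored throughout, your outline reproduces the paper's proof.
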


Next we show the validity of the Harnack inequality, that is we have

\begin{theorem}[Harnack inequality]\label{harnack-thm}
Under the same hypotheses of \cref{thm-local-bdd} with the additional assumption that \(u\) is a non-negative weak solution of \(\di\A=\B\) in \(B_{3R}\) then
\begin{equation}
\max_{B_{R}}u\leq C_R\pt{\min_{B_R}u+k_R} 
\end{equation}
where \(C_R\) and \(k_R\) are as in \cref{thm-local-bdd}.
\end{theorem}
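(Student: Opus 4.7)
The plan is to follow Moser's classical two-step scheme, combining the local sup estimate already established in \cref{thm-local-bdd} with a matching weak Harnack inequality for $\inf_{B_R} u$, throughout tracking constants in terms of the doubling and Poincaré data of $w$. Set $v = u + k_R$, which is strictly positive; with this choice the lower-order and inhomogeneous terms $b,c,\ldots,g$ appearing in \eqref{hyp-A1}--\eqref{hyp-B} can be absorbed into $v$-dependent expressions when $v$ is used as test-function factor, so that $v$ effectively satisfies a homogeneous-type inequality to which Moser iteration applies. The target reduces to
\[
\max_{B_R} v \leq C_R \min_{B_R} v,
\]
since $u \le v$ on the left and $v \le u + k_R$ on the right.

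The argument then has three ingredients. First, Moser iteration with positive test-function factors $v^{\alpha}$, combined with the local Sobolev inequality \ref{sob-ineq} along a geometric sequence of radii between $R$ and $3R/2$, yields for every $q > 0$
\[
\max_{B_R} v \leq C \left(\fint_{B_{3R/2}} v^{q} w\,dx\right)^{1/q};
\]
this is essentially \cref{thm-local-bdd} applied to $v$, started from an arbitrarily small initial exponent. Second, a mirror-image iteration with negative test-function factors $v^{-\alpha}$, justified by truncating $v$ away from $0$ and using \ref{uniq-grad} to pass to the limit, produces
\[
\left(\fint_{B_{3R/2}} v^{-q} w\,dx\right)^{-1/q} \leq C \min_{B_R} v.
\]
Third, to bridge these two inequalities I would test the equation with $\eta^p v^{1-p}$ and derive the logarithmic Caccioppoli estimate
\[
\int_{B_r} |\nabla \log v|^p w\,dx \leq C \frac{w(B_r)}{r^p},
\]
again using $k_R$ to absorb inhomogeneities. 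Coupled with \ref{weighted-poincare}, this places $\log v$ in a weighted BMO class with respect to $w\,dx$; a weighted John--Nirenberg lemma (available because $w\,dx$ is doubling and supports a Poincaré inequality) then provides an exponent $q_0 > 0$ for which
\[
\left(\fint_{B_{3R/2}} v^{q_0} w\,dx\right)^{1/q_0} \leq C \left(\fint_{B_{3R/2}} v^{-q_0} w\,dx\right)^{-1/q_0}.
\]
Chaining the three estimates with $q = q_0$ yields the Harnack inequality.

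The main obstacle is this John--Nirenberg step: one must transplant the Calderón--Zygmund decomposition to the metric-measure space $(\RR^N, |\cdot|, w\,dx)$ and keep the resulting $q_0$ and constants dependent only on $\gamma_w$, the Poincaré constant from \ref{weighted-poincare}, and the structural quantities already listed in \cref{thm-local-bdd}. A secondary but nontrivial technicality is the admissibility of the test-function factors $v^{\pm\alpha}$ and $v^{1-p}$, which requires a prior truncation $\delta \le v \le M$, the standard chain-rule estimates in $H^{1,p,w}_{loc}$, and a limit argument as $\delta \downarrow 0$ and $M \uparrow \infty$ relying on monotone convergence together with \ref{uniq-grad} to identify the weighted gradients in the limit.
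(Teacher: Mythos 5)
Your proposal follows the same route as the paper's proof: Moser iteration with test functions $\eta^p \bar u^\beta$ (with $\bar u = u + k + \delta$) over positive and negative ranges of the exponent, a logarithmic Caccioppoli estimate from the borderline exponent $\beta = 1-p$, the local weighted Poincaré inequality \ref{weighted-poincare} to place $\log \bar u$ in BMO with respect to $w\,\dx$, and the John--Nirenberg lemma for doubling measures (the paper invokes \cite[Appendix II]{HeKiMa2006} rather than re-deriving Calderón--Zygmund in the weighted setting) to bridge the positive- and negative-power $L^{q_0}$ averages. This is precisely the Moser/Serrin scheme adapted to the weighted setting that the author carries out, so the two arguments coincide in structure and in all key lemmas.
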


And as it is usual in this context, once one is able to obtain the Harnack inequality, then a local Hölder regularity result readily follows

\begin{theorem}\label{holder-thm}
Suppose that \(1<p<D_w\) and that the hypotheses of \cref{thm-local-bdd} are satisfied, but in addition we suppose that
\[
b\in L^{\frac{D_w}{p-1-\ve},w}(\Omega)
\]
for some \(\ve>0\). If \(u\in H^{1,p,w}\) is a (local) weak solution of
\[
\di\A(x,u,\nabla u)=\B(x,u,\nabla u) \text{ in }\Omega,
\]
then \(u\) is locally Hölder continuous.
\end{theorem}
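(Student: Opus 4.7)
The plan is to derive local Hölder continuity from the Harnack inequality of \cref{harnack-thm} via the classical Moser oscillation-decay argument. First, fix $x_0 \in \Omega$ and $R_0 > 0$ with $B_{4R_0}(x_0) \subset \Omega$. By \cref{thm-local-bdd}, $u$ is essentially bounded on $B_{2R_0}(x_0)$; set $M_0 = \norm{u}_{L^\infty(B_{2R_0}(x_0))}$. For $0 < r \leq R_0$ introduce $M(r) = \sup_{B_r(x_0)} u$, $m(r) = \inf_{B_r(x_0)} u$, and
\[
v_1 := M(3r) - u, \qquad v_2 := u - m(3r),
\]
both non-negative on $B_{3r}(x_0)$. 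Direct substitution shows that $v_1$ solves weakly $\di\tilde\A(x,v_1,\nabla v_1) = \tilde\B(x,v_1,\nabla v_1)$ with $\tilde\A(x,v,z) := -\A(x,M(3r)-v,-z)$ and $\tilde\B(x,v,z) := -\B(x,M(3r)-v,-z)$, and analogously for $v_2$. Using the elementary bounds $\abs{M-v}^{p-1} \leq 2^{p-2}(M^{p-1}+\abs{v}^{p-1})$ and $\abs{M-v}^p \leq 2^{p-1}(M^p+\abs{v}^p)$, the pair $(\tilde\A,\tilde\B)$ verifies \eqref{hyp-A1}--\eqref{hyp-B} with the modified structural functions
\[
\tilde b = 2^{p-2}b,\ \tilde d = 2^{p-1}d,\ \tilde e = e + C_p\, b\, M_0^{p-1},\ \tilde g = g + C_p\, d\, M_0^p,
\]
and the remaining parameters the same up to constants.

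Next, apply \cref{harnack-thm} to each of $v_1, v_2$ on $B_{3r}(x_0)$ to obtain
\begin{gather*}
M(3r) - m(r) \leq C\bigl(M(3r) - M(r) + \tilde k_r\bigr),\\
M(r) - m(3r) \leq C\bigl(m(r) - m(3r) + \tilde k_r\bigr),
\end{gather*}
where $\tilde k_r$ is the analogue of $k_r$ built from the tilded structural parameters. Writing $\omega(r) := M(r)-m(r)$ and adding, the two cross terms cancel after rearrangement to give the oscillation-decay estimate
\[
\omega(r) \leq \theta\, \omega(3r) + C'\, \tilde k_r, \qquad \theta := \tfrac{C-1}{C+1} \in (0,1).
\]

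Finally, iterate at the geometric radii $r_j = 3^{-j} R_0$. The strict excess $\ve > 0$ already present in \eqref{struc-hyp} together with the doubling comparison $w(B_r) \geq c (r/R_0)^{D_w} w(B_{R_0})$ imply, via Jensen/Hölder applied to the scaled integrals defining $b_r, c_r, d_r, f_r, g_r$, that each of these quantities decays at least like $r^\ve$; the added hypothesis $b \in L^{D_w/(p-1-\ve),w}$ gives the same kind of decay for the newly generated $b\, M_0^{p-1}$ contribution inside $\tilde e$, hence $\tilde k_r \leq C\, r^{\alpha_0}$ for some $\alpha_0 > 0$. Iterating the oscillation bound and summing the resulting geometric series,
\[
\omega(r_j) \leq \theta^{j}\, \omega(R_0) + C'\sum_{k=0}^{j-1} \theta^{j-1-k}\,\tilde k_{r_k} \leq C\, r_j^{\beta}, \qquad \beta := \min\{\alpha_0, \log_3(1/\theta)\},
\]
which is exactly the pointwise Hölder modulus of continuity on $B_{R_0/2}(x_0)$.

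The main obstacle lies in this last step: one must verify that the Harnack constant $C$ entering the oscillation inequality is \emph{uniform} in $r$ (which reduces to uniform bounds on $b_r, c_r, \ldots$ as $r \to 0$, obtained through doubling), and that $\tilde k_r$ decays at a definite polynomial rate. The strengthened assumption $b \in L^{D_w/(p-1-\ve),w}$ is precisely what forces the borderline $\tilde e$-contribution $b\, M_0^{p-1}$ to produce a strict excess $\tilde e_r \leq C r^{\ve}$ rather than merely a bounded $\tilde e_r$; without this, the iteration would yield continuity at a point but not a positive Hölder exponent.
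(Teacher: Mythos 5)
Your proof takes essentially the same route as the paper: apply the Harnack inequality of \cref{harnack-thm} to the non-negative functions $M(3r)-u$ and $u-m(3r)$ (which solve equations of the same structural type with bounded, $M_0$-dependent shifts in $\tilde e,\tilde f,\tilde g$), add the two resulting inequalities to obtain the oscillation-decay estimate $\omega(r)\leq\theta\,\omega(3r)+C\tilde k_r$ with $\theta=(C-1)/(C+1)<1$, verify via doubling (as in \cref{k_R-estim}) that the residual term $\tilde k_r$ decays polynomially, and iterate along geometric radii. This is the argument the paper gives, reduced there to the paradigm of \cite[Theorem 8]{Serrin1964}.

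One small inaccuracy in your accounting of the residual term is worth flagging, although it is in fact shared with the paper's own \cref{k_R-estim}. Under \eqref{struc-hyp} the parameters $c,d,f,g$ carry the $\ve$-excess that makes $c_r,d_r,f_r,g_r=O(r^\ve)$, and the added hypothesis $b\in L^{D_w/(p-1-\ve),w}$ gives $b_r=O(r^\ve)$ (it does \emph{not} come from \eqref{struc-hyp} alone, as your first sentence in that step suggests). But $e$ is only assumed in $L^{D_w/(p-1),w}$, so $e_r$ is bounded but has no guaranteed rate of decay; since $\tilde e=e+C_p\,b\,M_0^{p-1}$, the term $\tilde e_r$ inherits this lack of decay through its $e$-part, not its $b$-part. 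To conclude $\tilde k_r\leq C r^{\alpha_0}$ with $\alpha_0>0$ one therefore also needs an $\ve$-improved integrability for $e$ (the paper's \cref{k_R-estim} implicitly writes $e\in L^{D_w/(p-1-\ve),w}$ in its expression for $k_0$ even though the statement of \cref{holder-thm} only strengthens the hypothesis on $b$). Modulo this shared imprecision, your oscillation argument is correct and identical in structure to the paper's.
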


With the above local regularity results we are now able to study \eqref{bb-eq}, that is we now turn to the equation
\begin{equation}\label{crit-weighted-sob-eq}
\left\{
\begin{aligned}
-\di\pt{w\abs{\nabla u}^{p-2}\nabla u}&=w\abs{u}^{q-2}u&&\text{in }\Omega,\\
u&\in D^{1,p,w}(\Omega),
\end{aligned}
\right.
\end{equation}
where \(q=\frac{D_wp}{D_w-p}\) is the \emph{local} weighted Sobolev exponent associated to the weight \(w\) and \(D^{1,p,w}(\Omega)\) is the closure of \(C^\infty_c(\Omega)\) under the (semi) norm \(\norm{\nabla u}_{p,w}\). If we further suppose that the weight \(w\) is such that the \emph{global} weighted Sobolev inequality 
\begin{equation}\label{sob-ineq-w}
S_{p,w}\pt{\int_{\Omega}\abs{u}^{q}w\dx}^{\frac{1}{q}}\leq \pt{\int_\Omega\abs{\nabla u}^pw\dx}^{\frac1p}
\end{equation}
is satisfied for all \(u\in C^\infty_c(\Omega)\) then one has \(D^{1,p,w}(\Omega)\hookrightarrow L^{q,w}(\Omega)\) and we are able to prove the following decay estimate for weak solutions to the critical equation \eqref{crit-weighted-sob-eq}.

\begin{theorem}[Decay estimate of weak solutions]\label{decay-thm}
Suppose \(w\) satisfies \eqref{sob-ineq-w} in addition to \eqref{doubling-w}, \eqref{inv-loc-int2} and \ref{weighted-poincare}. If \(u\in D^{1,p,w}(\Omega)\) is a weak solution of
\[
-\di(w\abs{\nabla u}^{p-2}\nabla u)=w\abs{u}^{q-2}u\qquad\text{in }\Omega,
\]
there exist \(R>0\), \(C>0\) and \(\lambda>0\) such that if \(x\in\Omega\) satisfies \(\abs{x}\geq R\) then
\[
\abs{u(x)}\leq \frac{C}{\abs{x}^{\frac{D_w-p}{p}+\lambda}}.
\]
\end{theorem}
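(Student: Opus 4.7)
The plan combines the local regularity results of this article (\cref{thm-local-bdd,thm-local-inte,harnack-thm}) with the global integrability $u\in L^{q,w}(\Omega)$ provided by \eqref{sob-ineq-w}, and runs in three stages.

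\textbf{Local boundedness.} Equation \eqref{crit-weighted-sob-eq} fits \eqref{model-eq} with $b\equiv c\equiv e\equiv f\equiv g\equiv 0$ and $d(x)=|u(x)|^{q-p}$. The critical identity $(q-p)D_w/p=q$ coming from $q=D_wp/(D_w-p)$ makes the hypothesis $d\in L^{D_w/p,w}_{loc}$ of \cref{thm-local-inte} equivalent to $u\in L^{q,w}_{loc}$, which is given. Thus \cref{thm-local-inte} yields $u\in L^{s,w}_{loc}(\Omega)$ for every finite $s$, which promotes $d$ to $L^{D_w/(p-\varepsilon),w}_{loc}$ for some $\varepsilon>0$; \cref{thm-local-bdd} then gives $u\in L^\infty_{loc}(\Omega)$.

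\textbf{Qualitative decay.} Testing \eqref{sob-ineq-w} against a standard cutoff of $B_R$ gives the lower bound $w(B_R(x))\gtrsim R^{D_w}$ for large balls. For $r=|x_0|$ large and $R=r/4$, Hölder between the exponents $q/p$ and $(q/p)'$ combined with the inclusion $B_{2R}(x_0)\subset\{|x|>r/2\}$ and the weight lower bound yields
\[
\pt{\fint_{B_{2R}(x_0)}|u|^pw\dx}^{1/p}\leq C\,r^{-\frac{D_w-p}{p}}\tau(r/2)^{1/q},\qquad\tau(\rho):=\int_{|x|>\rho}|u|^qw\dx.
\]
The Moser constant in \cref{thm-local-bdd} is controlled through $d_R\leq C r^{p}\sup_{|x|\geq r/2}|u|^{q-p}$; this is kept bounded because the critical-threshold structural quantity $d_R^{(\mathrm{int})}=R^p(\fint_{B_{2R}}|u|^qw)^{p/D_w}$ entering \cref{thm-local-inte} is automatically bounded via the weight lower bound, allowing one to propagate the crude a priori estimate $\sup_{|x|\geq r}|u|\lesssim r^{-(D_w-p)/p}$. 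Since $\tau(r/2)\to 0$, we obtain $|u(x_0)|=o\bigl(r^{-(D_w-p)/p}\bigr)$.

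\textbf{Polynomial gain.} Set $m(\rho):=\rho^{(D_w-p)/p}\sup_{|x|\geq\rho}|u|$ and introduce the rescaling $v_{x_0}(y):=r^{(D_w-p)/p}u(x_0+(r/4)y)$. The critical relation $\beta(q-p)=p$ with $\beta=(D_w-p)/p$ forces $v_{x_0}$ to satisfy the same critical equation on $B_1$ with respect to the shifted weight $\tilde w(y):=w(x_0+(r/4)y)$, which inherits doubling and the Sobolev inequality uniformly in $r$. The qualitative step gives $\|v_{x_0}\|_{L^\infty(B_1)}\to 0$, so the nonlinear coefficient $|v_{x_0}|^{q-p}$ in the rescaled equation becomes uniformly small at infinity. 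Applying \cref{thm-local-bdd} (or \cref{harnack-thm}) to $v_{x_0}$ and carefully tracking the dependence of the Moser constants on this small parameter yields a contraction of the form
\[
m(r)\leq\theta\,m(3r/4),\qquad\theta<1,
\]
for $r$ beyond a threshold. Iterating produces polynomial decay $m(r)\leq Cr^{-q\lambda}$ with $\lambda=\log_{4/3}(1/\theta)/q>0$, and re-inserting into the Stage~2 estimate gives $|u(x)|\leq C|x|^{-(D_w-p)/p-\lambda}$.

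The main obstacle is producing the strict contraction $\theta<1$ in Stage~3. Because the equation is critical, naive Caccioppoli--Sobolev iteration on $v_{x_0}$ only yields $m(r)\leq Cm(3r/4)$ with $C\geq 1$, and one must exploit the vanishing of $\|v_{x_0}\|_{L^\infty(B_1)}$ quantitatively. Two natural routes are (i) a compactness/blow-up argument based on $\int|\nabla u|^pw<\infty$, in which any rescaled sequence failing to decay would produce a nontrivial finite-energy solution of the critical equation on a limit domain, contradicting an appropriate rigidity; and (ii) a comparison argument against a radial weighted $p$-superharmonic barrier decaying at the natural rate $|x|^{-(D_w-p)/(p-1)}$, which would yield the explicit gain $\lambda=(D_w-p)/(p(p-1))$.
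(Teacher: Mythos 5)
Your Stages 1 and 2 are sound and essentially reproduce the paper's \cref{ext-bd-local} together with a H\"older/weight-lower-bound computation, but they only deliver \(\abs{u(x)}=o\bigl(\abs{x}^{-\frac{D_w-p}{p}}\bigr)\). The entire content of the theorem is the polynomial gain \(\lambda>0\), and that is exactly the step your Stage 3 does not actually prove: you posit a contraction \(m(r)\leq\theta\,m(3r/4)\) and then concede that critical scaling only gives \(\theta\geq 1\), offering two unexecuted escape routes. Neither route works here. A compactness/rigidity argument requires a Liouville-type classification of finite-energy solutions of the critical equation for a general \(p\)-admissible weight, which is not available (the paper only has such comparison/rigidity tools for the specific weights of Section 4). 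The barrier route is circular at this stage: to compare \(u_+\) with \(\abs{x}^{-s}\) one needs the starting decay \(\abs{u}\leq C\abs{x}^{-t}\) with \(t(q-1)\) strictly above the barrier exponent, i.e.\ precisely the conclusion of this theorem; moreover constructing a weighted \(p\)-superharmonic power barrier ``depends heavily on the type of weight'' and is deferred to the examples section for that reason.

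The missing idea is to extract the gain not from a sup-norm contraction but from a polynomial decay of the \emph{energy tail}. The paper's \cref{lem-tau} tests the equation with \(\varphi=\eta^pu\), where \(\eta\) vanishes on \(B_R\) and equals \(1\) off \(B_{2R}\); the global Sobolev inequality \eqref{sob-ineq-w} lets one absorb the critical right-hand side once \(R\geq R_0\) is large enough that \(\bigl(\int_{\Omega\setminus B_{R_0}}\abs{u}^qw\bigr)^{1-p/q}\) is small, and the scaling \(q=\chi_wp\) makes the remaining cutoff term dimensionless. This yields \(f(2R)\leq\theta f(R)\) with \(\theta<1\) for \(f(R)=\int_{\Omega\setminus B_R}\abs{u}^qw\), hence \(f(R)\leq(R_0/R)^{\tau}f(R_0)\) with \(\tau=-q\log_2\theta>0\). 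Note the contraction here is in the tail functional \(f\), where the ``hole-filling'' trick (\(\int_{B_{2R}\setminus B_R}=\int_{\Omega\setminus B_R}-\int_{\Omega\setminus B_{2R}}\)) manufactures \(\theta=C/(C+1)<1\) from a constant \(C\geq 1\); this is what your sup-norm formulation cannot do. Once you have this, your own Stage 2 inequality \(\bigl(\fint_{B_{2R}(x_0)}\abs{u}^pw\bigr)^{1/p}\leq Cr^{-\frac{D_w-p}{p}}\tau(r/2)^{1/q}\) finishes the proof with \(\lambda=\tau/q\) (the paper instead routes through the annular Moser iteration of \cref{lema-tau2}, which also settles the uniformity in \(x_0\) of the local-boundedness constant that your Stage 2 treats only heuristically).
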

\begin{remark}
If we know specifics about the weight \(w\), then \cref{decay-thm} can be improved to obtain that weak solutions satisfy
\[
\abs{u(x)}\leq \frac{C}{\abs{x}^{\frac{D_w-p}{p-1}-\ve}}
\]
for large \(\abs{x}\) and any \(\ve>0\). To obtain such result one uses \cref{decay-thm} together with a comparison principle. We will show this later for the case of monomial weights \(w(x)=x^A\) and power type weights \(w(x)=\abs{x}^a\) in \cref{examples-sect}.
\end{remark}

The rest of this article is dedicated to the proofs of the above results. In \cref{sec-apriori} we study \eqref{model-eq} and obtain the proofs of \cref{thm-local-bdd,thm-local-inte,thm-local-lower,harnack-thm,holder-thm} whereas in \cref{sec-infty} we turn to the proof of \cref{decay-thm}. Finally in \cref{examples-sect} we exhibit specific examples of weights for which our results apply and how the specificity of each weight allows us to improve on \cref{decay-thm} to obtain sharper results.

\section{Local estimates}\label{sec-apriori}

We follow the proof of analogous results given in \cite{Serrin1964}. The main ingredient will be Moser's iteration technique which relies on using a suitable version of \(u^\beta\) as a test function for \eqref{model-eq}. To do so we consider \(\alpha\geq 1\) and \(0\leq k\leq l\) and we define  \(F:[k,\infty)\to\RR\) as
\begin{equation}\label{func-F}
F(x)=F_{\alpha,k,l}(x)=\begin{dcases}
x^{\alpha}&\text{if }k\leq x\leq l,\\
l^{\alpha-1}\pt{\alpha x-(\alpha -1)l}&\text{if }x>l.
\end{dcases}
\end{equation}
Observe that \(F\in C^1([k,\infty))\) with  \(\abs{F'(x)}\leq \alpha l^{\alpha-1}\). We also consider \(\bar x=\abs{x}+k\) and \(G:\RR\to\RR\) defined as
\begin{equation}\label{func-G}
G(x)=G_{\alpha,k,l}(x)=\mathrm{sign}(x)\pt{F(\bar x)\abs{F'(\bar x)}^{p-1}-\alpha^{p-1}k^\beta}
\end{equation}
where \(\beta=1+p(\alpha-1)\). Then clearly \(G\) is a piecewise smooth function which is linear if \(\abs{x}>l-k\) and that both \(F\) and \(G\) satisfy
\begin{gather*}
\abs{G(x)}\leq F(\bar x)\abs{F'(\bar x)}^{p-1}\\
\bar xF'(\bar x)\leq \alpha F(\bar x)\\
F'(\bar x)\leq \alpha F(\bar x)^{1-\frac{1}{\alpha}}
\end{gather*}
and
\[
G'(x)=\begin{dcases}
\frac{\beta}{\alpha}\abs{F'(\bar x)}^p&\text{if }\abs{x}<l-k,\\
\abs{F'(\bar x)}^p&\text{if }\abs{x}>l-k.
\end{dcases}
\]

Observe that as \(l\to \infty\) we have \(F(\bar x)\sim \bar x^\alpha\) and \(G(x)\sim \frac{\beta}{\alpha}\bar x^{\beta}\) so the function \(G(x)\) will play the role of \(x^\beta\). Finally, observe that if \(\eta\in C^\infty_c(\Omega)\) and if \(u\in H^{1,p,w}_{loc}(\Omega)\) then \(\varphi=\eta^pG(u)\) is a valid test function in
\[
\int_\Omega \A(x,u,\nabla u)\nabla\varphi+\B(x,u,\nabla u)\varphi=0
\]
thanks to the results in \cite[Chapter 1]{HeKiMa2006} regarding weighted Sobolev spaces for \(p\)-admissible weights we can now prove the local boundedness of weak solutions.

Since no confusion is present, throughout the proofs in this chapter we will write \(D=D_w\) and \(\chi=\chi_w=\frac{D}{D-p}>1\).

\begin{proof}[Proof of \cref{thm-local-bdd}]
For the case \(R=1\) and \(x_0\in\Omega\) such that \(B_2(x_0)\subset\Omega\) we follow the proof of \cite[Theorem 1]{Serrin1964} with a few modifications: recall that for such \(\chi\) then the local Sobolev inequality \eqref{local-Sobolev} is valid, and by using \eqref{hyp-A1}-\eqref{hyp-B} we can write
\begin{equation}\label{hyp-mod}
\begin{aligned}
\abs{\A(x,u,z)}\leq w\pt{\abs{z}^{p-1}+\bar b\bar u^{p-1}},\\
\abs{\B(x,u,z)}\leq w\pt{c\abs{z}^{p-1}+\bar d\bar u^{p-1}},\\
\A(x,u,z)\cdot z\geq w\pt{\abs{z}^{p}-\bar d\bar u^{p}},
\end{aligned}
\end{equation}
where
\begin{gather*}
\bar b=b+k^{1-p}e,\\
\bar d=d+k^{1-p}f+k^{-p}g,
\end{gather*}
and \(\bar u=\abs{u}+k\) for \(k\geq 0\) defined as\footnote{If \(e=f=g=0\) we can take any \(k>0\) and at the very end we can pass to the limit \(k\to 0^+\).}  
\begin{equation*}
k=\pt{\pt{\fint_{B_2}\abs{e}^{\frac{D}{p-1}}w}^{\frac{p-1}{D}}+\pt{\fint_{B_2}\abs{f}^{\frac{D}{p-\ve}}w}^{\frac{p-\ve}{D}}}^{\frac{1}{p-1}}+\pt{\pt{\fint_{B_2}\abs{g}^{\frac{D}{p-\ve}}w}^{\frac{p-\ve}{D}}}^{\frac1p},
\end{equation*}
where to simplify the notation from now on we use \(B_2\) to denote \(B_2(x_0)\). Observe that this choice of \(k\) together with the assumptions over the functions \(b\) through \(g\) imply that
\begin{equation}\label{k-esti}
\fint_{B_2}\abs{\bar b}^{\frac{D}{p-1}}w\leq C,\qquad
\fint_{B_2}\abs{\bar d}^{\frac{D}{p-\ve}}w\leq C,
\end{equation}
for some constant \(C>0\) depending on \(b,d,e,f,g\) and \(w(B_2)\).

As we mentioned at the beginning of this section for the local weak solution \(u\) and arbitrary non-negative \(\eta\in C^\infty_c(B_2)\) we can consider \(\varphi=\eta^pG(u)\) as a valid test function and with the aid of \eqref{hyp-mod} one can obtain the a.e. estimate
\begin{align*}
\A\cdot\nabla \varphi+\B\varphi&=\eta^p G'(u)\A\cdot\nabla u+p\eta^{p-1}G(u)\A\cdot\nabla \eta+\eta^pG(u)\B\\
&\geq \eta^p G'(u)w\pt{\abs{\nabla u}^{p}-\bar d\bar u^{p}}
-p\eta^{p-1}\abs{\nabla \eta G(u)}w\pt{\abs{\nabla u}^{p-1}+\bar b\bar u^{p-1}}\\
&\quad-\eta^p\abs{G(u)}w\pt{c\abs{\nabla u}^{p-1}+\bar d\bar u^{p-1}}\\
&\geq \abs{\eta F'(\bar u)\nabla u}^pw-p\abs{F(\bar u)\nabla \eta}\abs{\eta F'(\bar u)\nabla u}^{p-1}w-p\bar b\abs{F(\bar u)\nabla \eta}\abs{\eta \bar u F'(\bar u)}^{p-1}w\\
&\quad -c\eta F(\bar u)\abs{\eta F'(\bar u)\nabla u}^{p-1}w-\bar d\pt{\alpha^{-1}\beta\abs{\eta \bar u F'(\bar u)}^p+\eta F(\bar u)\abs{\eta \bar u F'(\bar u)}^{p-1}}w
\end{align*}
so that if \(v=F(\bar u)\) one reaches
\begin{multline}\label{bas-esti0}
\A\cdot\nabla \varphi+\B\varphi\geq
\abs{\eta \nabla v}^pw-p\abs{v\nabla \eta}\abs{\eta \nabla v}^{p-1}w
-p\alpha^{p-1}\bar b\abs{v\nabla \eta}\abs{\eta v}^{p-1}w\\
-c\eta v\abs{\eta \nabla v}^{p-1}w
-\pt{1+\beta}\alpha^{p-1}\bar d\abs{\eta v}^{p}w.
\end{multline}
After averaging the above inequality over \(B_2\) we obtain
\begin{multline}\label{bas-esti}
\fint_{B_2}\abs{\eta\nabla v}^pw\leq p\fint_{B_2}\abs{v\nabla \eta}\abs{\eta\nabla v}^{p-1}w+p\alpha^{p-1}\fint_{B_2}\bar b\abs{v\nabla \eta}\abs{v\eta}^{p-1}w\\+\fint_{B_2} cv\eta\abs{\eta\nabla v}^{p-1}w+(1+\beta)\alpha^{p-1}\fint_{B_2}\bar d\abs{v\eta}^pw,
\end{multline}
and each term on the right hand side can be estimated using \eqref{local-Sobolev}, \eqref{struc-hyp}, and \eqref{k-esti} as follows: 
\begin{equation}\label{gath1}
\fint_{B_2}\abs{v\nabla \eta}\abs{\eta\nabla v}^{p-1}w\leq \pt{\fint_{B_2}\abs{v\nabla \eta}^pw}^{\frac1p}\pt{\fint_{B_2}\abs{\eta\nabla v}^{p}w}^{1-\frac1p}
\end{equation}

\begin{equation}\label{gath2}
\fint_{B_2}\bar b\abs{v\nabla \eta}\abs{v\eta}^{p-1}w
\leq C\pt{\pt{\fint_{B_2}\abs{v\nabla\eta}^{p}w}+\pt{\fint_{B_2}\abs{v\nabla \eta}^pw}^{\frac1p}\pt{\fint_{B_2}\abs{\eta \nabla v}^{p}w}^{1-\frac{1}{p}}}
\end{equation}

\begin{multline}\label{gath3}
\fint_{B_2} cv\eta\abs{\eta\nabla v}^{p-1}w
\leq C\pt{\fint_{B_2} \abs{v\eta}^p w}^{\frac{\ve}p}\left(\pt{\fint_{B_2} \abs{v\nabla \eta}^{p} w}^{\frac{1-\ve}p}\pt{\fint_{B_2} \abs{\eta\nabla v}^{p}w}^{1-\frac1p}\right.\\
\left.+\pt{\fint_{B_2} \abs{\eta\nabla v}^{p}w}^{1-\frac{\ve}p}\right)
\end{multline}

\begin{equation}\label{gath4}
\fint_{B_2}\bar d\abs{v\eta}^pw
\leq C\pt{\fint_{B_2}\abs{v\eta}^{p}w}^{\frac{\ve}{p}}\pt{\pt{\fint_{B_2}\abs{v\nabla\eta}^{p}w}^{\frac{p-\ve}{p}}+\pt{\fint_{B_2}\abs{\eta\nabla v}^{p}w}^{\frac{p-\ve}{p}}}
\end{equation}

therefore if one considers  
\[
z=\frac{\pt{\fint_{B_2}\abs{\eta\nabla v}^pw}^{\frac1p}}{\pt{\fint_{B_2}\abs{v\nabla \eta}^pw}^{\frac1p}}
\]
and
\[
\zeta=\frac{\pt{\fint_{B_2}\abs{\eta v}^pw}^{\frac1p}}{\pt{\fint_{B_2}\abs{v\nabla \eta}^pw}^{\frac1p}}
\]
then, because \(\alpha\geq 1\), \eqref{bas-esti} becomes
\[
z^p\leq C\pt{z^{p-1}+\alpha^{p-1}(1+z^{p-1})+\zeta^\ve(z^{p-1}+z^{p-\ve})+(1+\beta)\alpha^{p-1}\zeta^\ve(1+z^{p-\ve})}
\]
which with the aid of \cite[Lemma 2]{Serrin1964} gives
\[
z\leq C\alpha^{\frac{p}{\ve}}(1+\zeta).
\]
The above translates to
\begin{equation}\label{fund-esti}
\pt{\fint_{B_2}\abs{\eta \nabla v}^pw}^{\frac1p}\leq C\alpha^{\frac{p}{\ve}}\pt{\pt{\fint_{B_2}\abs{\eta v}^pw}^{\frac1p}+\pt{\fint_{B_2}\abs{v\nabla \eta}^pw}^{\frac1p}}
\end{equation}
and because of \eqref{local-Sobolev} we obtain
\begin{equation}\label{fund-esti2}
\pt{\fint_{B_2}\abs{\eta v}^{\chi p}w}^{\frac1{\chi p}}\leq C\alpha^{\frac{p}{\ve}}\pt{\pt{\fint_{B_2}\abs{\eta v}^pw}^{\frac1p}+\pt{\fint_{B_2}\abs{v\nabla \eta}^pw}^{\frac1p}}.
\end{equation}

To continue one considers a sequence of cut-off functions as follows: we take \(\eta_n\in C^\infty_c(B_{h_n})\) such that \(\eta_n\equiv 1\) in \(B_{h_{n+1}}\) and \(\abs{\nabla \eta_n}\leq C2^n\) where \(h_n=1+2^{-n}\). If one recalls that the weight is doubling so that \(w(B_{h_n})\leq \gamma_ww(B_{h_{n+1}})\) we deduce from \eqref{fund-esti2} that (after passing to the limit \(l\to \infty\))
\begin{equation}\label{moser-step0}
\pt{\fint_{B_{h_{n+1}}}\abs{\bar u^\alpha}^{\chi p}w}^{\frac1{\chi p}}\leq C2^n\alpha^{\frac{p}{\ve}}\pt{\fint_{B_{h_n}}\abs{\bar u^\alpha}^pw}^{\frac1p},
\end{equation}
which is valid for all \(\alpha\geq 1\). We observe that using \eqref{moser-step0} for \(\alpha_n=\chi^n\geq 1\) gives
\[
\pt{\fint_{B_{h_{n+1}}}\abs{\bar u}^{s_{n+1}}w}^{\frac1{s_{n+1}}}\leq C^{\chi^{-n}}2^{n\chi^{-n}}\chi^{\frac{p}{\ve}n\chi^{-n}}\pt{\fint_{B_{h_n}}\abs{\bar u}^{s_n}w}^{\frac1{s_n}},
\]
where \(s_n=p\chi^n\). Because \(\chi>1\) then \(\sum_{k=0}^\infty k\chi^{-k}\) and \(\sum_{k=0}^\infty\chi^{-k}\) are convergent series so we can iterate the above inequality to obtain
\[
\pt{\fint_{B_{h_{n+1}}}\abs{\bar u}^{s_{n+1}}w}^{\frac1{s_{n+1}}}\leq C\pt{\fint_{B_{2}}\abs{\bar u}^pw}^{\frac1p},
\]
which after passing to the limit \(n\to\infty\) yields\footnote{Note that \(L^{\infty,w}(B_1)=L^{\infty}(B_1)\) because \(0<w<\infty\) a.e. in \(\Omega\).}
\[
\norm{u}_{L^{\infty}(B_1)}\leq C\spt{\pt{\fint_{B_{2}}\abs{u}^pw}^{\frac1p}+k},
\]
and the result follows in the case \(R=1\).

If \(R\neq 1\) a standard scaling argument allows us to reduce the situation to the case \(R=1\). We include this argument as it will be used a few more times in the following proofs. If we consider \(\tilde u(y)=u(Ry)\) where \(u\) is a weak solution of \(\di\A=\B\) in \(B_{2R}(x_0)\), then \(\tilde u\) is a weak solution of \(\di\tilde \A=\tilde \B\) in \(B_2(y_0)\) where \(y_0=R^{-1}x_0\) and
\[
\tilde\A(y,\tilde u,\tilde z)=R^{p-1}\A(Ry,\tilde u,R^{-1}\tilde z),\quad \tilde\B(y,\tilde u,\tilde z)=R^{p}\B(Ry,\tilde u,R^{-1}\tilde z).
\]
It is clear that if \(\A\) and \(\B\) satisfy \eqref{hyp-A1},\eqref{hyp-A2},\eqref{hyp-B} then \(\tilde \A\) and \(\tilde \B\) satisfy
\begin{gather}
\tag{\(\widetilde{\text{H1}}\)}
\tilde\A(y,\tilde u,\tilde z)\cdot \tilde z\geq \tilde w(y)\pt{\abs{\tilde z}^p-\tilde d\abs{\tilde u}^p-\tilde g},\\
\tag{\(\widetilde{\text{H2}}\)}
\abs{\A(y,\tilde u,\tilde z)}\leq \tilde w(y)\pt{\abs{\tilde z}^{p-1}+\tilde b\abs{\tilde u}^{p-1}+\tilde e},\\
\tag{\(\widetilde{\text{H3}}\)}
\abs{\B(y,\tilde u,\tilde z)}\leq \tilde w(y)\pt{\tilde c\abs{\tilde z}^{p-1}+\tilde d\abs{\tilde u}^{p-1}+\tilde f},
\end{gather}
where
\begin{gather*}
\tilde w(y)=w(Ry),\quad \tilde c(y)=Rc(Ry)\\
\tilde b(y)=R^{p-1}b(Ry),\quad \tilde e(y)=R^{p-1}e(Ry),\\
\tilde d(y)=R^pd(Ry),\quad \tilde f(y)=R^pf(Ry),\quad \tilde g(y)=R^pg(Ry),
\end{gather*}
hence we can apply the case \(R=1\) to \(\tilde u\) to obtain
\[
\norm{u}_{L^{\infty}(B_R(x_0))}=\norm{\tilde u}_{L^{\infty}(B_1(y_0))}\leq C\spt{\pt{\fint_{B_{2}(y_0)}\abs{\tilde u}^p\tilde w}^{\frac1p}+\tilde k},
\]
where
\[
\tilde k=\pt{\pt{\fint_{B_2(y_0)}\abs{\tilde e}^{\frac{D}{p-1}}\tilde w}^{\frac{p-1}{D}}+\pt{\fint_{B_2(y_0)}\abs{\tilde f}^{\frac{D}{p-\ve}}\tilde w}^{\frac{p-\ve}{D}}}^{\frac{1}{p-1}}+\pt{\pt{\fint_{B_2(y_0)}\abs{\tilde g}^{\frac{D}{p-\ve}}\tilde w}^{\frac{p-\ve}{D}}}^{\frac1p},
\]
and the result follows if we observe that
\begin{gather*}
\pt{\fint_{B_2(y_0)}\abs{\tilde b}^{\frac{D}{p-1}}\tilde w}^{\frac{p-1}{D}}=R^{p-1}\pt{\fint_{B_{2R}(x_0)}\abs{b}^{\frac{D}{p-1}}w}^{\frac{p-1}{D}},\\
\pt{\fint_{B_2(y_0)}\abs{\tilde c}^{\frac{D}{1-\ve}}\tilde w}^{\frac{1-\ve}{D}}=R\pt{\fint_{B_{2R}(x_0)}\abs{c}^{\frac{D}{1-\ve}}w}^{\frac{1-\ve}{D}},\\
\pt{\fint_{B_2(y_0)}\abs{\tilde d}^{\frac{D}{p-\ve}}\tilde w}^{\frac{p-\ve}{D}}=R^{p}\pt{\fint_{B_{2R}(x_0)}\abs{d}^{\frac{D}{p-\ve}}w}^{\frac{p-\ve}{D}},\\
\pt{\fint_{B_2(y_0)}\abs{\tilde e}^{\frac{D}{p-1}}\tilde w}^{\frac{p-1}{D}}=R^{p-1}\pt{\fint_{B_{2R}(x_0)}\abs{e}^{\frac{D}{p-1}}w}^{\frac{p-1}{D}},\\
\pt{\fint_{B_2(y_0)}\abs{\tilde f}^{\frac{D}{p-\ve}}\tilde w}^{\frac{p-\ve}{D}}=R^{p}\pt{\fint_{B_{2R}(x_0)}\abs{f}^{\frac{D}{p-\ve}}w}^{\frac{p-\ve}{D},}\\
\pt{\fint_{B_2(y_0)}\abs{\tilde g}^{\frac{D}{p-\ve}}\tilde w}^{\frac{p-\ve}{D}}=R^{p}\pt{\fint_{B_{2R}(x_0)}\abs{g}^{\frac{D}{p-\ve}}w}^{\frac{p-\ve}{D}}.
\end{gather*}
\end{proof}

\begin{remark}\label{loc-glob-rem1}
If the global Sobolev inequality \eqref{sob-ineq-w} is satisfied then there is no need to average the integrals in the above proof. For instance, to obtain \eqref{gath4} we used Hölder's inequality to write
\[
\fint_{B_2}\bar d\abs{v\eta}^pw\leq
\pt{\fint_{B_2}\bar d^{\frac{D}{p-\ve}}w}^{\frac{p-\ve}{D}}
\pt{\fint_{B_2}\abs{v\eta}^pw}^{\frac{\ve}{p}}
\pt{\fint_{B_2}\abs{v\eta}^{\chi p}w}^{\frac{p-\ve}{\chi p}}
\]
and then we used the local Sobolev inequality \eqref{local-Sobolev} to estimate the last term as
\[
\pt{\fint_{B_2}\abs{v\eta}^{\chi p}w}^{\frac{p-\ve}{\chi p}}\leq C\pt{\fint_{B_2}\abs{\nabla(v\eta)}^{p}w}^{\frac{p-\ve}{p}}\leq C\spt{
	\pt{\fint_{B_2}\abs{\eta\nabla v}^{p}w}^{\frac{p-\ve}{p}}
	+
	\pt{\fint_{B_2}\abs{v\nabla\eta}^{p}w}^{\frac{p-\ve}{p}}
	}.
\]
However, if the global Sobolev inequality holds then the same follows without averaging, that is we would have
\[
\int_{B_2}\bar d\abs{v\eta}^pw
\leq C\pt{\int_{B_2}\abs{v\eta}^{p}w}^{\frac{\ve}{p}}\pt{\pt{\int_{B_2}\abs{v\nabla\eta}^{p}w}^{\frac{p-\ve}{p}}+\pt{\int_{B_2}\abs{\eta\nabla v}^{p}w}^{\frac{p-\ve}{p}}}
\]
and similarly for \eqref{gath1}-\eqref{gath3} and \eqref{fund-esti2}. In particular when the global Sobolev inequality holds we have a version of \cref{thm-local-bdd} where the following estimate holds
\[
\norm{u}_{L^{\infty}(B_R(x_0))}\leq C_R\spt{\norm{u}_{L^{p,w}(B_{2R}(x_0))}+k_R},
\]
and the integrals defining \(k_R\) are also not averaged.
\end{remark}

\begin{proof}[Proof of \cref{thm-local-inte}]
It is enough to consider the case \(R=1\) because the scaling argument remains the same. Additionally, thanks to the interpolation inequality in \(L^{s,w}\), it is enough to find a sequence \(s_n\sublim\limits_{n\to\infty}+\infty\) for which one has
\[
\pt{\fint_{B_1}\abs{\bar u}^{s_n}w}^{\frac1{s_n}}\leq C_n\pt{\fint_{B_{2}}\abs{\bar u}^pw}^{\frac1p},
\]
where \(\bar u=\abs{u}+k\). As in the proof of \cref{thm-local-bdd}, by using the test function \(\varphi=\eta^pG(u)\) we reach to the inequality
\begin{multline*}
\fint_{B_2}\abs{\eta\nabla v}^pw\leq p\fint_{B_2}\abs{v\nabla \eta}\abs{\eta\nabla v}^{p-1}w+p\alpha^{p-1}\fint_{B_2}\bar b\abs{v\nabla \eta}\abs{v\eta}^{p-1}w\\+\fint_{B_2} cv\eta\abs{\eta\nabla v}^{p-1}w+(1+\beta)\alpha^{p-1}\fint_{B_2}\bar d\abs{v\eta}^pw,
\end{multline*}
but because \(\ve=0\) we cannot repeat \eqref{gath3}-\eqref{gath4}. For the terms involving \(c\) and \(\bar d\) we can write for \(M>0\) the following
\begin{align*}
\fint_{B_2} cv\eta\abs{\eta\nabla v}^{p-1}w&=\frac{1}{w(B_2)}\pt{\int_{B_2\cap\set{c\leq M}} cv\eta\abs{\eta\nabla v}^{p-1}w+\int_{B_2\cap \set{c>M}} cv\eta\abs{\eta\nabla v}^{p-1}w}\\
&\leq M\pt{\fint_{B_2} \abs{v\eta}^pw}^{\frac1p}\pt{\fint_{B_2} \abs{\eta\nabla v}^pw}^{1-\frac1p}\\
&\quad+\pt{\frac{1}{w(B_2)}\int_{B_2\cap \set{c>M}}c^{D}w}^{\frac1D}\pt{\fint_{B_2} \abs{v\eta}^{\chi p}w}^{\frac1{\chi p}}\pt{\fint_{B_2} \abs{\eta\nabla v}^pw}^{1-\frac1p}\\
&\leq M\pt{\fint_{B_2} \abs{v\eta}^pw}^{\frac1p}\pt{\fint_{B_2} \abs{\eta\nabla v}^pw}^{1-\frac1p}\\
&\quad+C\pt{\fint_{B_2}\abs{v\nabla\eta}^pw}^{\frac1p}\pt{\fint_{B_2} \abs{\eta\nabla v}^pw}^{1-\frac1p}\\
&\quad+C\pt{\frac{1}{w(B_2)}\int_{B_2\cap \set{c>M}}c^{D}w}^{\frac1D}\pt{\fint_{B_2}\abs{\eta \nabla v}^pw}
\end{align*}
and
\begin{align*}
\fint_{B_2}\bar d\abs{v\eta}^pw&=\frac{1}{w(B_2)}\pt{\int_{B_2\cap\set{\bar d\leq M}}\bar d\abs{v\eta}^pw+\int_{B_2\cap\set{\bar d>M}}\bar d\abs{v\eta}^pw}\\
&\leq M\fint_{B_2}\abs{v\eta}^pw+\pt{\frac{1}{w(B_2)}\int_{B_2\cap\set{\bar d>M}}\bar d^{\frac{D}{p}}w}^{\frac{p}D}\pt{\fint_{B_2}\abs{v\eta}^{\chi p}w}^{\frac1\chi}\\
&\leq M\fint_{B_2}\abs{v\eta}^pw+C\pt{\fint_{B_2}\abs{v \nabla\eta}^{p}w}\\
&\quad+\pt{\frac{1}{w(B_2)}\int_{B_2\cap\set{\bar d>M}}\bar d^{\frac{D}{p}}w}^{\frac{p}D}\pt{\fint_{B_2}\abs{\eta \nabla v}^{p}w}.
\end{align*}

Because \(c\in L^{D,w}\) and \(\bar d\in L^{\frac{D}{p},w}\) then for any \(\delta>0\) we can find \(M>0\) such that
\[
C\pt{\frac{1}{w(B_2)}\int_{B_2\cap \set{c>M}}c^{D}w}^{\frac1D}+\pt{\frac{1}{w(B_2)}\int_{B_2\cap\set{\bar d>M}}\bar d^{\frac{D}{p}}w}^{\frac{p}D}\leq \delta,
\]
therefore for any \(\alpha\geq 1\) we can find \(\delta>0\) sufficiently small and a constant \(C_\alpha>0\) such that
\begin{multline*}
\fint_{B_2}\abs{\eta\nabla v}^pw\leq
C_\alpha
\spt{\pt{\fint_{B_2}\abs{v\nabla \eta}^pw}^{\frac1p}+\pt{\fint_{B_2}\abs{v\eta}^pw}^{\frac1p}}\pt{\fint_{B_2}\abs{\eta\nabla v}^{p}w}^{1-\frac1p}\\
+C_\alpha\pt{\fint_{B_2}\abs{v\nabla \eta}^pw}+C_\alpha\pt{\fint_{B_2}\abs{v\eta}^pw}.
\end{multline*}
The above inequality allows us to use \cite[Lemma 2]{Serrin1964} once again and obtain an inequality analogous to \eqref{fund-esti}, namely
\begin{equation}\label{fund-esti4}
\pt{\fint_{B_2}\abs{\eta \nabla v}^pw}^{\frac1p}\leq C_\alpha\pt{\pt{\fint_{B_2}\abs{\eta v}^pw}^{\frac1p}+\pt{\fint_{B_2}\abs{v\nabla \eta}^pw}^{\frac1p}}
\end{equation}
the main difference being that the constant \(C_\alpha\) is no longer explicit. Nonetheless we can continue the argument from the proof of \cref{thm-local-bdd} by choosing appropriate cut-off functions \(\eta\) to reach
\[
\pt{\fint_{B_{h_{n}}}\abs{\bar u}^{s_{n}}w}^{\frac1{s_{n}}}\leq C_n\pt{\fint_{B_{2}}\abs{\bar u}^pw}^{\frac1p},
\]
where \(s_n=p\chi^n\) and \(h_n=1+2^{-n}\), where now we do not obtain a uniform estimate for \(C_n\). Finally, because \(1\leq h\leq 2\) we know that
\[
\frac{w(B_h)}{w(B_1)}\leq\gamma_w
\]
so that
\[
\pt{\fint_{B_{1}}\abs{\bar u}^{s_{n}}w}^{\frac1{s_{n}}}\leq C_n\pt{\fint_{B_{2}}\abs{\bar u}^pw}^{\frac1p}
\]
and the result is proved.
\end{proof}

\begin{remark}\label{loc-glob-rem2}
Similar to \cref{loc-glob-rem1}, if the global Sobolev inequality holds then there is no need to average the integrals in the above proof and as a consequence we obtain the following estimate in \cref{thm-local-inte}
\[
\norm{u}_{L^{s,w}(B_R(x_0))}\leq C_{R,s}\spt{\norm{u}_{L^{p,w}(B_{2R}(x_0))}+k_{R}},
\]
where once again \(k_R\) needs to be changed to the non-averaged version.
\end{remark}

\begin{proof}[Proof of \cref{thm-local-lower}]
This is similar to the proof of \cref{thm-local-bdd,thm-local-inte} and \cite[Theorem 1']{Serrin1964}. As before, we only do the case \(R=1\). For \(F\) and \(G\) as in \eqref{func-F} and \eqref{func-G} respectively we take \(\varphi=\eta^p G(u)\) as test function for some \(\eta\in C^\infty_c(B_2)\) with \(0\leq \eta\leq 1\). The main difference here is that we take \(\bar u=\abs{u}\) as we need to keep track of the terms involving \(e,f,g\) instead of including them in \(\bar b\) and \(\bar d\) through the use of \(k>0\). With that in mind and using that \(F'(\bar u)\leq \alpha F(\bar u)^{1-\frac{1}{\alpha}}\) we have
\begin{align*}
\A\cdot\nabla \varphi+\B\varphi
&\geq \eta^p G'(u)w\pt{\abs{\nabla u}^{p}-du^{p}-g}
-p\eta^{p-1}\abs{\nabla \eta G(u)}w\pt{\abs{\nabla u}^{p-1}+bu^{p-1}+e}\\
&\quad -\eta^p\abs{G(u)}w\pt{c\abs{\nabla u}^{p-1}+du^{p-1}+f}\\
&\geq \abs{\eta F'(\bar u)\nabla u}^{p}w
-p\abs{\nabla \eta F(\bar u)}\abs{\eta F'(\bar u)\nabla u}^{p-1}w\\
&\quad -p\alpha^{p-1}b\abs{F(\bar u)\nabla \eta }\abs{\eta F(\bar u)}^{p-1}w-c\abs{\eta F(\bar u)}\abs{\eta F'(\bar u)\nabla u}^{p-1}w\\
&\quad -(1+\beta)\alpha^{p-1}d\abs{\eta F(\bar u)}^{p}w-p\alpha^{p-1}e\abs{F(\bar u)\nabla \eta}\abs{\eta F(\bar u)}^{\frac{\beta}{\alpha}-1}w\\
&\quad -\alpha^{p-1}f\abs{\eta F(\bar u)}\abs{\eta F(\bar u)}^{\frac{\beta}{\alpha}}w
-\beta\alpha^{p-1}g\abs{\eta F(\bar u)}^{p\frac{\alpha-1}{\alpha}}w
\end{align*}
which after averaging over \(B_2\) gives
\begin{multline}\label{bas-esti5}
\fint_{B_2}\abs{\eta\nabla v}^pw\leq
p\fint_{B_2}\abs{v\nabla \eta}\abs{\eta\nabla v}^{p-1}w
+p\alpha^{p-1}\fint_{B_2}b\abs{v\nabla \eta}\abs{v\eta}^{p-1}w\\
+\fint_{B_2} cv\eta\abs{\eta\nabla v}^{p-1}w
+(1+\beta)\alpha^{p-1}\fint_{B_2}d\abs{v\eta}^pw
+p\alpha^{p-1}\fint_{B_2}e\abs{v\nabla \eta}\abs{\eta v}^{\frac{\beta}{\alpha}-1}w\\
+\alpha^{p-1}\fint_{B_2}f\abs{\eta v}^{\frac{\beta}{\alpha}}w
+\beta\alpha^{p-1}\fint_{B_2}g\abs{\eta v}^{\frac{p(\alpha-1)}{\alpha}}w,
\end{multline}
where \(v=F(\bar u)\).

With an inequality similar to \eqref{fund-esti} or \eqref{fund-esti4} as our goal we estimate each term in the above inequality. The terms involving \(b,c,d\) are bounded exactly as in \cref{thm-local-inte}, that is we obtain
\begin{multline*}
p\fint_{B_2}\abs{v\nabla \eta}\abs{\eta\nabla v}^{p-1}w
+p\alpha^{p-1}\fint_{B_2}b\abs{v\nabla \eta}\abs{v\eta}^{p-1}w
+\fint_{B_2} cv\eta\abs{\eta\nabla v}^{p-1}w\\
+(1+\beta)\alpha^{p-1}\fint_{B_2}d\abs{v\eta}^pw
\leq 
C_\alpha
\spt{\pt{\fint_{B_2}\abs{v\nabla \eta}^pw}^{\frac1p}+\pt{\fint_{B_2}\abs{v\eta}^pw}^{\frac1p}}\pt{\fint_{B_2}\abs{\eta\nabla v}^{p}w}^{1-\frac1p}\\
+C_\alpha\pt{\fint_{B_2}\abs{v\nabla \eta}^pw}+C_\alpha\pt{\fint_{B_2}\abs{v\eta}^pw},
\end{multline*}
for some constant \(C_\alpha>0\). 

For the other terms let \(s_1,s_2,s_3\geq 1\), and consider \(\alpha\geq1\) such that \(\frac{1}{s_1}+\frac1p+\frac{\beta-\alpha}{\alpha\chi p}=\frac1{s_2}+\frac{\beta}{\alpha\chi p}=1\) then for any \(\delta>0\) small we can use Hölder's inequality and \eqref{local-Sobolev} to obtain a constant \(C_\delta>0\) such that
\begin{align*}
\fint_{B_2}e\abs{v\nabla \eta}\abs{\eta v}^{\frac{\beta}{\alpha}-1}w&\leq \pt{\fint_{B_2}\abs{e}^{s_1}w}^{\frac1{s_1}}\pt{\fint_{B_2}\abs{v\nabla \eta}^pw}^{\frac1p}\pt{\fint_{B_2}\abs{\eta v}^{\chi p}w}^{\frac{\beta-\alpha}{\alpha\chi p}}\\
&\leq C\pt{\fint_{B_2}\abs{e}^{s_1}w}^{\frac1{s_1}}\pt{\fint_{B_2}\abs{v\nabla \eta}^pw}^{\frac1p}\pt{\fint_{B_2}\abs{\nabla(\eta v)}^{p}w}^{\frac{\beta-\alpha}{\alpha p}}\\
&\leq C_\delta\pt{\pt{\fint_{B_2}\abs{e}^{s_1}w}^{\frac{p\alpha}{s_1(p-1)}}+\pt{\fint_{B_2}\abs{v\nabla \eta}^pw}}\\
&\quad+\delta\pt{\fint_{B_2}\abs{\nabla(\eta v)}^{p}w}.
\end{align*}
Similarly
\begin{align*}
\fint_{B_2}f\abs{\eta v}^{\frac{\beta}{\alpha}}w&\leq\pt{\fint_{B_2}\abs{f}^{s_2}w}^{\frac1{s_2}}\pt{\fint_{B_2}\abs{\eta v}^{\chi p}w}^{\frac{\beta}{\alpha \chi p}}\\
&\leq C\pt{\fint_{B_2}\abs{f}^{s_2}w}^{\frac1{s_2}}\pt{\fint_{B_2}\abs{\nabla(\eta v)}^{p}w}^{\frac{\beta}{\alpha p}}\\
&\leq C_\delta\pt{\pt{\fint_{B_2}\abs{f}^{s_2}w}^{\frac{p\alpha}{s_2(p-1)}}+\pt{\fint_{B_2}\abs{v\nabla\eta}^{p}w}}+\delta\pt{\fint_{B_2}\abs{\eta \nabla v}^{p}w},
\end{align*}
and if \(\frac1{s_3}+\frac{p(\alpha-1)}{\alpha \chi p}=1\)
\begin{align*}
\fint_{B_2}g\abs{\eta v}^{\frac{p(\alpha-1)}{\alpha}}w&\leq \pt{\fint_{B_2}\abs{g}^{s_3}w}^{\frac1{s_3}}\pt{\fint_{B_2}\abs{\eta v}^{\chi p}w}^{\frac{p(\alpha-1)}{\alpha \chi p}}\\
&\leq C\pt{\fint_{B_2}\abs{g}^{s_3}w}^{\frac1{s_3}}\pt{\fint_{B_2}\abs{\nabla(\eta v)}^{p}w}^{\frac{p(\alpha-1)}{\alpha p}}\\
&\leq C_\delta\pt{\pt{\fint_{B_2}\abs{g}^{s_3}w}^{\frac{\alpha}{s_3}}+\pt{\fint_{B_2}\abs{v\nabla\eta }^{p}w}}+\delta\pt{\fint_{B_2}\abs{\eta\nabla v}^{p}w}.
\end{align*}
If we put the above estimates in \eqref{bas-esti5} for \(\delta>0\) small enough we obtain a constant \(C_{\alpha}>0\) such that
\begin{align*}
\fint_{B_2}\abs{\eta\nabla v}^pw
&\leq
C_\alpha
\spt{\pt{\fint_{B_2}\abs{v\nabla \eta}^pw}^{\frac1p}+\pt{\fint_{B_2}\abs{v\eta}^pw}^{\frac1p}}\pt{\fint_{B_2}\abs{\eta\nabla v}^{p}w}^{1-\frac1p}\\
&\quad+C_\alpha\spt{\pt{\fint_{B_2}\abs{v\nabla \eta}^pw}+\pt{\fint_{B_2}\abs{v\eta}^pw}}\\
&\quad+C_\alpha\spt{\pt{\fint_{B_2}\abs{e}^{s_1}w}^{\frac{p\alpha}{s_1(p-1)}}+\pt{\fint_{B_2}\abs{f}^{s_2}w}^{\frac{p\alpha}{s_2(p-1)}}+\pt{\fint_{B_2}\abs{g}^{s_3}w}^{\frac{\alpha}{s_3}}}
\end{align*}
which with the aid of \cite[Lemma 2]{Serrin1964} one more time gives
\[
\pt{\fint_{B_2}\abs{\eta\nabla v}^pw}^{\frac1p}
\leq
C_\alpha \spt{\pt{\fint_{B_2}\abs{\eta\nabla v}^pw}^{\frac1p}+\pt{\fint_{B_2}\abs{\eta v}^pw}^{\frac1p}+M^\alpha}
\]
where
\[
M=\pt{\fint_{B_2}\abs{e}^{s_1}w}^{\frac{p}{s_1(p-1)}}+\pt{\fint_{B_2}\abs{f}^{s_2}w}^{\frac{p}{s_2(p-1)}}+\pt{\fint_{B_2}\abs{g}^{s_3}w}^{\frac{1}{s_3}}.
\]

We can proceed as in the previous theorems by using \eqref{local-Sobolev}, selecting \(\eta\) and passing to the limit \(l\to\infty\) to obtain
\begin{equation}\label{bas-ineq38}
\pt{\fint_{B_{h_{n+1}}}\abs{u}^{p\alpha\chi}w}^{\frac1{p\alpha\chi}}\leq C_{\alpha,n}\spt{\pt{\fint_{B_{h_n}}\abs{u}^{p\alpha}w}^{\frac1{p\alpha}}+M},
\end{equation}
where \(h_n=1+2^{-n}\) and \(\alpha\geq1\) is chosen so that
\begin{align}
\label{cond1}\frac{1}{s_1}+\frac1p+\frac{\beta-\alpha}{\alpha\chi p}=\frac1{s_2}+\frac{\beta}{\alpha\chi p}=1\\
\label{cond2}\frac1{s_3}+\frac{p(\alpha-1)}{\alpha \chi p}=1.
\end{align}
Observe that \eqref{cond1} implies that \(s_1=\chi s_2\) and as a consequence \(M\) is finite if \(s_2\leq r\) and \(s_3\leq t\). Therefore we can iterate \eqref{bas-ineq38} by selecting \(\alpha=\alpha_n\geq 1\) satisfying \eqref{cond1}-\eqref{cond2}. Recalling that \(\chi=\frac{D}{D-p}\) we note that the iteration can be done provided
\begin{align*}
s_2\leq r \text{ and } \eqref{cond1}&:\ \frac{1}{p\alpha\chi}\geq \frac{1}{p-1}\pt{\frac1r-\frac{p}{D}}=\frac1{s}\\
s_3\leq t \text{ and } \eqref{cond2}&:\ \frac{1}{p\alpha\chi}\geq \frac{1}{p}\pt{\frac1t-\frac{p}{D}}=\frac1{s}
\end{align*}
which mean that after a finite number of steps we will obtain
\[
\pt{\fint_{B_{1}}\abs{u}^{s}w}^{\frac1{s}}\leq C_{\alpha,n_0}\spt{\pt{\fint_{B_{2}}\abs{u}^{p}w}^{\frac1{p}}+M},
\]
for some \(n_0\in\NN\).
\end{proof}

\begin{proof}[Proof of \cref{harnack-thm}]
As before, we only consider the case \(R=1\). \cref{thm-local-bdd} says that \(u\) is bounded on any compact subset of \(B_{3}\) hence for any \(\beta\in\RR\) and any \(\delta>0\) the function \(\varphi=\eta^p\bar u^{\beta}\) is a valid test function provided \(\bar u=u+k+\delta\) and \(\eta\in C^\infty_c(B_3)\). Here \(k\) is defined exactly as in \cref{thm-local-bdd}.

We begin with the case which differs the most from the proof of \cite[Theorem 5]{Serrin1964}. Suppose that \(v=\log \bar u\) and by following the idea of the proof in \cite{Serrin1964} we reach to
\begin{multline}\label{esti-jn-1}
(p-1)\int_{B_3} \abs{\eta\nabla v}^pw\leq p\int_{B_3} \abs{\nabla \eta}\abs{\eta\nabla v}^{p-1}w+p\int_{B_3} \bar b\eta^{p-1}\abs{\nabla\eta}w+\int_{B_3} c\eta\abs{\eta\nabla v}^{p-1}w\\
+p\int_{B_3} \bar d\eta^pw,
\end{multline}
for any \(\eta\in C^\infty_c(B_3)\). To continue denote by \(z=\pt{\int_{B_3}\abs{\eta\nabla v}^pw}^{\frac1p}\) and with the aid of Hölder's inequality \eqref{esti-jn-1} becomes
\[
z^{p}\leq C_1z^{p-1}+C_2,
\]
where
\begin{gather*}
C_1=\frac{p}{p-1}\pt{\int_{B_3}\abs{\nabla\eta}^pw}^{\frac1p}+\frac1{p-1}\pt{\int_{B_3}\abs{c\eta}^pw}^{\frac1p},\\
C_2=\frac{p}{p-1}\int_{B_3}\bar b\eta^{p-1}\abs{\nabla\eta}w+\frac{p}{p-1}\int_{B_3}\bar d\eta^pw,
\end{gather*}
which thanks to Young's inequality implies
\[
z^p\leq C(C_1^p+C_2),
\]
for some constant \(C\). To continue we estimate \(C_1\) and \(C_2\) using appropriate \(\eta\). For any \(0<h<2\) such that \(B_h\subset B_2\) (not necessarily concentric) we have that \(B_{\frac{3h}2}\subset B_3\) and we consider \(\eta\in C^\infty_c(B_{\frac{3h}2})\) such that \(\eta\equiv 1\) in \(B_h\) and \(\abs{\nabla \eta}\leq Ch^{-1}\). We use such \(\eta\) in \eqref{esti-jn-1} and we perform the following estimates
\begin{equation*}
\int_{B_3} \abs{\nabla \eta}^pw\leq \frac{C}{h^p}w(B_{\frac{3h}2}),
\end{equation*}
\begin{align*}
\int_{B_3} \bar b\eta^{p-1}\abs{\nabla\eta}w&\leq \frac{Cw(B_{\frac{3h}{2}})}{h}\pt{\frac{w(B_3)}{w(B_{\frac{3h}{2}})}}^{\frac{p-1}{D}}\pt{\fint_{B_3} \abs{\bar b}^{\frac{D}{p-1}}w}^{\frac{p-1}{D}}\\
&\leq \frac{Cw(B_{\frac{3h}{2}})}{h^{p}}\pt{\fint_{B_3} \abs{\bar b}^{\frac{D}{p-1}}w}^{\frac{p-1}{D}},
\end{align*}
\begin{align*}
\int_{B_3} \abs{c\eta}^pw&\leq Cw(B_{\frac{3h}2})\pt{\frac{w(B_3)}{w(B_{\frac{3h}{2}})}}^{\frac{(1-\ve)p}{D}}\pt{\fint_{B_3} \abs{c}^{\frac{D}{1-\ve}}w}^{\frac{(1-\ve)p}{D}}\\
&\leq C\frac{w(B_{\frac{3h}2})}{h^{(1-\ve)p}}\pt{\fint_{B_3} \abs{c}^{\frac{D}{1-\ve}}w}^{\frac{1-\ve}{D}},
\end{align*}
\begin{align*}
\int_{B_3} \bar d\eta^pw&\leq Cw(B_{\frac{3h}2})\pt{\frac{w(B_3)}{w(B_{\frac{3h}{2}})}}^{\frac{p-\ve}{D}}\pt{\fint_{B_3} \abs{\bar d}^{\frac{D}{p-\ve}}w}^{\frac{p-\ve}{D}}\\
&\leq C\frac{w(B_{\frac{3h}2})}{h^{p-\ve}}\pt{\fint_{B_3} \abs{\bar d}^{\frac{D}{p-\ve}}w}^{\frac{p-\ve}{D}},
\end{align*}
where we have used \eqref{w-ball-estimate} repeatedly. Therefore one obtains
\begin{align*}
\fint_{B_h}\abs{\nabla v}^pw&\leq \frac{1}{w(B_h)}\int_{B_3}\abs{\eta\nabla v}^pw\\
&\leq \frac{1}{w(B_h)}\pt{C_1^p+C_2}\\
&\leq C\frac{w(B_{\frac{3h}{2}})}{w(B_h)}\pt{h^{-p}+h^{-(1-\ve)p}+h^{-(p-\ve)}}\\
&\leq C\pt{h^{-p}+h^{-(1-\ve)p}+h^{-(p-\ve)}},
\end{align*}
where now \(C\) depends on \(\fint_{B_3}\abs{\bar b}^{\frac{D}{p-1}}w,\ \fint_{B_3}\abs{c}^{\frac{D}{1-\ve}}w\) and \(\fint_{B_3}\abs{\bar d}^{\frac{D}{p-\ve}}w\).

Finally, the local Poincaré-Sobolev inequality \eqref{local-Poincare} tells us that
\begin{align*}
\fint_{B_h}\abs{v-v_{B_h}}w&\leq \pt{\fint_{B_h}\abs{v-v_{B_h}}^pw}^{\frac1p}\\
&\leq Ch\pt{\fint_{B_h}\abs{\nabla v}^pw}^{\frac1p}\\
&\leq C\pt{1+h^{p\ve}+h^{\ve}}^{\frac1p},
\end{align*}
and because \(h\leq 2\) for any ball \(B_h\subseteq B_2\) we conclude that
\[
\fint_{B_h}\abs{v-v_{B_h}}w\leq C
\]
where \(C>0\) is a constant not depending on \(h\), in other words, \(v\in \mathrm{BMO}(B_2,w\dx)\) and the John-Nirenberg lemma for doubling measures \cite[Appendix II]{HeKiMa2006} tells us that there exists constants \(p_0, C>0\) such that
\[
\fint_{B}e^{p_0\abs{v-v_{B}}}w\leq C
\]
for all balls \(B\subseteq B_2\), in particular this gives
\[
\pt{\fint_{B_2}e^{p_0v}w} \cdot \pt{\fint_{B_2}e^{-p_0v}w} \leq C^2,
\]
and because \(v=\log \bar u\) we have obtained
\[
\fint_{B_2}\bar u^{p_0}w\leq C\pt{\fint_{B_2}\bar u^{-p_0}w}^{-1}.
\]
If we denote by \(\Psi(p,h)=\pt{\fint_{B_h}\bar u^pw}^{\frac1p}\) then the above becomes
\begin{equation}\label{esti-37}
\Psi(p_0,2)\leq C\Psi(-p_0,2).
\end{equation}

The rest of the proof consists in using \(\varphi=\eta^p\bar u^\beta\) for \(\beta\neq 1-p,0\) as test function. If \(v=\bar u^\alpha\) for \(\alpha\) given by \(p\alpha=p+\beta-1\) then following the ideas from \cite{Serrin1964} leads to
\begin{itemize}[leftmargin=*]
\item If \(\beta>0\) then
\begin{equation*}
\pt{\fint_{B_3} \abs{\eta v}^{\chi p}w}^{\frac1{\chi p}}\leq C\alpha^{\frac{p}{\ve}}(1+\beta^{-1})^{\frac1\ve}\spt{\pt{\fint_{B_3} \abs{\eta v}^pw}^{\frac1p}+\pt{\fint_{B_3} \abs{\nabla\eta v}^pw}^{\frac1p}}.
\end{equation*}
If \(\eta\in C^\infty_c(B_h)\) is such that \(\eta\equiv 1\) in \(B_{h'}\) for \(1\leq h'<h\leq 2\) with \(\abs{\nabla \eta}\leq C(h-h')^{-1}\) then
\begin{equation*}
\pt{\fint_{B_{h'}} \abs{v}^{\chi p}w}^{\frac1{\chi p}}\leq C\pt{\frac{w(B_3)}{w(B_{h'})}}^{\frac1{\chi p}}\pt{\frac{w(B_{h})}{w(B_3)}}^{\frac1{p}}\frac{\alpha^{\frac{p}{\ve}}(1+\beta^{-1})^{\frac1\ve}}{h-h'}\pt{\fint_{B_h} \abs{v}^pw}^{\frac1p},
\end{equation*}
but since \(1\leq h'<h\leq 2\) we have
\[
\frac{w(B_3)}{w(B_{h'})}\leq \frac{w(B_{4h'})}{w(B_{h'})}\leq \gamma_w^2\quad \text{and}\quad \frac{w(B_h)}{w(B_{3})}\leq 1
\]
hence
\begin{equation}\label{esti-38}
\Psi(\chi p,h')
\leq C\frac{\alpha^{\frac{p}{\ve}}(1+\beta^{-1})^{\frac1\ve}}{h-h'}\Psi(p,h).
\end{equation}

\item Similarly, for \(1-p<\beta<0\) one has
\begin{equation}\label{esti-39}
\Psi(\chi p,h')
\leq C\frac{(1-\beta^{-1})^{\frac1\ve}}{h-h'}\Psi(p,h).
\end{equation}
\item If \(\beta<1-p\) then one obtains
\begin{equation}\label{esti-40}
\Psi(\chi p',h')
\leq C\frac{(1+\abs{\alpha})^{\frac{p}\ve}}{h-h'}\Psi(p,h).
\end{equation}
\end{itemize}

If we observe that \(\Psi(s,r)\sublim\limits_{s\to\infty} \max\limits_{B_r}\bar u\) and \(\Psi(s,r)\sublim\limits_{s\to-\infty} \min\limits_{B_r}\bar u\) we can repeat the iterative argument from the proof of \cite[Theorem 5]{Serrin1964} to deduce that \eqref{esti-38} and \eqref{esti-39} imply
\[
\max_{B_1}\bar u\leq C\Psi(p_0',2)
\]
for some \(p_0'\leq p_0\) chosen appropriately, whereas \eqref{esti-40} will give
\[
\min_{B_1}\bar u\geq C^{-1}\Psi(-p_0,2).
\]
Finally we can use \eqref{esti-37} to obtain a constant \(C>0\) depending on the structural parameters such that
\[
\max_{B_1}\bar u\leq C\min_{B_1}\bar u
\]
and because \(\bar u=u+k+\delta\) we conclude by letting \(\delta\to 0^+\).
\end{proof}

\begin{remark}\label{k_R-estim}
Observe that \(k_R\) in \cref{thm-local-bdd} can be further estimated as follows: if one supposes that solutions are defined in \(B_{R_0}\) for some \(R_0>0\) then because of \eqref{w-ball-estimate} we can write
\[
\pt{\fint_{B_{2R}}\abs{h}^{s}w}^{\frac1s}\leq \pt{\frac{w(B_{R_0})}{w(B_{2R})}}^{\frac1s}\pt{\fint_{B_{R_0}}\abs{h}^{s}w}^{\frac1s}\leq CR_0^{\frac{D}s}R^{-\frac{D}s}\pt{\fint_{B_{R_0}}\abs{h}^{s}w}^{\frac1s}
\]
for any \(s>0\) and \(0<2R<R_0\) then one can give a better estimate on \(C_R\) and \(k_R\) in terms of \(R>0\) because for example one can see that if \(b\in L^{\frac{D}{p-1-\ve},w}(\Omega)\) then 
\[
b_R:=R^{p-1}\pt{\fint_{B_{2R(x_0)}}\abs{b}^{\frac{D}{p-1-\ve}}w}^{\frac{p-1-\ve}{D}}\leq CR_0^{p-1-\ve}R^\ve\pt{\fint_{B_{R_0}}\abs{b}^{\frac{D}{p-1}}w}^{\frac{p-1}D}\leq C_{R_0,b}R^\ve,
\]
similarly we obtain that 
\[
f_R\leq C_{R_0,f}R^{\ve},\quad
g_R\leq C_{R_0,g}R^{\ve},
\]
so that if
\[
k_0=\pt{\pt{\fint_{B_{R_0}}\abs{e}^{\frac{D}{p-1-\ve}}w}^{\frac{p-1-\ve}{D}}+\pt{\fint_{B_{R_0}}\abs{f}^{\frac{D}{p-\ve}}w}^{\frac{p-\ve}{D}}}^{\frac{1}{p-1}}+\pt{\fint_{B_{R_0}}\abs{g}^{\frac{D}{p-\ve}}w}^{\frac{p-\ve}{Dp}}
\]
then \(k_R\leq k_0\max\set{R^{\frac{\ve}{p-1}},R^{\frac{\ve}{p}}}\). Therefore if \(R_0<1\) we have that
\[
k_R\leq k_0R^{\frac{\ve}{p}}.
\]
\end{remark}

\begin{proof}[Proof of \cref{holder-thm}]
This is standard once we know the Harnack inequality is valid. We just highlight the main steps. For \(x_0\in \Omega\) by a suitable scaling we can suppose that \(B_2=B_2(x_0)\subseteq\Omega\) and we can consider for \(0<r\leq 1\) the functions
\[
M(r)=\max_{B_r}u\quad m(r)=\min_{B_r}u
\]
If we define \(u_M=M-u\) then \(u_M\) is a weak solution of an equation of the form
\[
\di \tilde \A=\tilde \B\quad \text{in }B_2
\]
for suitable \(\tilde{\A}\) and \(\tilde\B\) which satisfy the same structural hypotheses as \(\A\) and \(\B\) because \(u\) is bounded in \(B_1\). The only thing to keep in mind is that \(\tilde b,\tilde c,\ldots,\tilde g\) also depend on \(\max_{B_1}\abs{u}\). The Harnack inequality then implies that
\[
M(r)-m\pt{\frac{r}{3}}\leq C\max_{B_{\frac{r}{3}}} u_M\leq C\pt{\min_{B_{\frac{r}3}}u_M+\tilde k}=\tilde C\pt{M(r)-M\pt{\frac{r}{3}}+\tilde k}.
\]
The same situation occurs for the function \(u_m=u-m(r)\) as it can be shown that
\[
M\pt{\frac{r}{3}}-m(r)=\max_{B_{\frac{r}3}}u_m\leq \tilde C\pt{\min_{B_{\frac{r}3}}u_m+\tilde k}=C\pt{m\pt{\frac{r}{3}}-m(r)+\tilde k}
\]
and if \(\omega(r)=M(r)-m(r)\) denotes the oscillation of \(u\) in \(B_r\) we are led to
\[
\omega\pt{\frac{r}{3}}\leq \frac{\tilde C-1}{\tilde C+1}\omega(r)+\frac{2\tilde C}{\tilde C+1}\tilde k,
\]
but \cref{k_R-estim} tells us that if \(r<1\) then \(\tilde k=\tilde k_r\leq \tilde k_0r^{\frac{\ve}{p}}\) for some \(\tilde k_0\) depending solely on \(b,c,\ldots,g\) and \(\max_{B_1}\abs{u}\). Also by increasing its value one can suppose that \(1\leq \tilde C=\tilde C_r\leq C_0\) so that we have
\[
\omega\pt{\frac{r}{3}}\leq \theta\pt{w(r)+\tau r^{\frac{\ve}{p}}}
\]
for constants \(\theta,\tau>0\). The rest of the argument to conclude that \(u\) is Hölder continuous at \(x_0\) is exactly as in the proof of \cite[Theorem 8]{Serrin1964} thus we omit it.
\end{proof}

\section{Behavior at infinity}\label{sec-infty}

In this section we obtain a decay estimate for weak solutions to the equation
\begin{equation}\label{extremal-eq}
\left\{
\begin{aligned}
-\di(w\abs{\nabla u}^{p-2}\nabla u)&=w\abs{u}^{q-2}u&&\text{in }\Omega\\
u&\in D^{1,p,w}(\Omega)&&
\end{aligned}
\right.
\end{equation}
where \(q=\chi_wp=\frac{D_wp}{D_w-p}\) and the set \(\Omega\subseteq\RR^N\) (bounded or not) is such that there exists a constant \(S_{p,w}(\Omega)=S_{p,w}>0\) for which the global weighted Sobolev inequality \eqref{sob-ineq-w} holds.

With the aid of the results regarding the equation \(\di\A =\B\) we are able to prove that weak solutions to \eqref{extremal-eq} are locally bounded.
\begin{lemma}\label{ext-bd-local}
Let \(q=\chi_wp\) and \(u\in D^{1,p,w}(\Omega)\) be a weak solution of
\[
-\di(w\abs{\nabla u}^{p-2}\nabla u)=w\abs{u}^{q-2}u\qquad\text{in }\Omega.
\]
Then for every \(R>0\) such that \(B_{4R}(x_0)\subseteq\Omega\) then there exists \(C>0\) depending on \(R>0\) and on \(\norm{u}_{D^{1,p,w}(\Omega)}\) such that
\[
\norm{u}_{L^{\infty}(B_R(x_0))} \leq C.
\]
\end{lemma}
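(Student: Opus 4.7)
The plan is to view \eqref{extremal-eq} as a special case of the general equation \eqref{model-eq} and then bootstrap through \cref{thm-local-inte} into the regime where \cref{thm-local-bdd} applies, thereby circumventing the critical growth of the right-hand side.

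First, I would set \(\A(x,u,z)=w(x)\abs{z}^{p-2}z\) and \(\B(x,u,z)=-w(x)\abs{u}^{q-2}u\), so that the structural hypotheses \eqref{hyp-A1}--\eqref{hyp-B} hold with \(a=1\), \(b=c=e=f=g=0\) and \(d(x):=\abs{u(x)}^{q-p}\). Since \(u\in D^{1,p,w}(\Omega)\), the global Sobolev inequality \eqref{sob-ineq-w} gives \(u\in L^{q,w}(\Omega)\), and the critical identity \((q-p)D_w/p=q\) yields
\[
\int_\Omega d^{D_w/p}w\dx=\int_\Omega\abs{u}^qw\dx<\infty,
\]
so that \(d\in L^{D_w/p,w}(\Omega)\). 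This is precisely the borderline integrability required by \cref{thm-local-inte}, which I would apply on the ball \(B_{2R}(x_0)\)---the hypothesis \(B_{4R}(x_0)\subseteq\Omega\) of the lemma provides the needed inclusion \(B_{2\cdot 2R}(x_0)\subseteq\Omega\)---to deduce that \(u\in L^{s,w}(B_{2R}(x_0))\) for every finite \(s\).

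The improved integrability promotes \(d=\abs{u}^{q-p}\) into \(L^{D_w/(p-\ve),w}(B_{2R}(x_0))\) for any small \(\ve>0\), so the strict integrability condition \eqref{struc-hyp} of \cref{thm-local-bdd} is now satisfied on \(B_{2R}(x_0)\). Applying \cref{thm-local-bdd} on this ball, and noting that \(k_R=0\) since \(e=f=g=0\), gives
\[
\norm{u}_{L^\infty(B_R(x_0))}\leq C_R\pt{\fint_{B_{2R}(x_0)}\abs{u}^pw\dx}^{1/p}\leq C,
\]
the last bound following from \(u\in L^{q,w}(\Omega)\) and Hölder's inequality on \(B_{2R}(x_0)\).

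The main obstacle is the criticality of the exponent \(q\): a direct appeal to \cref{thm-local-bdd} fails because \(d=\abs{u}^{q-p}\) only sits in the borderline space \(L^{D_w/p,w}\) rather than in any \(L^{D_w/(p-\ve),w}\) with \(\ve>0\). \cref{thm-local-inte} is designed to handle precisely this critical case, and its output---local integrability of \(u\) at all finite exponents---is exactly what opens the door to \cref{thm-local-bdd}.
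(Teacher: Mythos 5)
Your proof is correct and follows essentially the same path as the paper's: you interpret the equation in the framework of \eqref{model-eq} with \(d=\abs{u}^{q-p}\in L^{D_w/p,w}\) (a consequence of \eqref{sob-ineq-w} and the critical identity \((q-p)D_w/p=q\)), invoke \cref{thm-local-inte} on \(B_{2R}(x_0)\) to upgrade \(u\) to \(L^{s,w}(B_{2R}(x_0))\) for all finite \(s\), and then feed this into \cref{thm-local-bdd} once \(d\) lands in \(L^{D_w/(p-\ve),w}\). The ball bookkeeping (\(B_{4R}\to B_{2R}\to B_R\)) and the observation that \(k_R=0\) match the paper's argument as well.
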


\begin{proof}
Observe that equation \eqref{extremal-eq} can be written in the from \(\di\A=\B\) for \(b=c=e=f=g=0\) and \(d=\abs{u}^{q-p}\). We first use \cref{thm-local-inte} in the version mentioned in \cref{loc-glob-rem2} as from that result we know that if \(d\in L^{\frac{D_w}{p},w}(\Omega)\) then for every \(s\geq 1\) and \(R>0\) the weak solution \(u\) satisfies 
\[
\norm{u}_{L^{s,w}(B_{2R}(x_0))}\leq C_{R,s}\norm{u}_{L^{p,w}(B_{4R}(x_0))},
\]
and \(C_{R,s}\) depends on \(s\) and on \(R^p\pt{\int_{B_{4R}(x_0)}\abs{d}^{\frac{D_w}{p}}w}^{\frac{p}D}\). But because \(u\in D^{1,p,w}(\Omega)\) the Sobolev inequality \eqref{sob-ineq-w} tells us that \(u\in L^{q,w}(\Omega)\), hence \(d\in L^{\frac{D_w}{p},w}(B_{4R}(x_0))\Leftrightarrow q=\chi_wp\). In particular, this shows that \(u\in L^{s,w}(B_{2R}(x_0))\) and as a consequence \(d=\abs{u}^{q-p}\in L^{\frac{D_w}{p-\ve},w}(B_{2R}(x_0))\) for every \(0<\ve<p\) therefore we can use \cref{thm-local-bdd} in the version mentioned in \cref{loc-glob-rem1} to conclude that 
\[
\norm{u}_{L^{\infty}(B_R(x_0))}\leq C,
\]
where \(C\) depends on \(R>0\) and the norm of \(u\) in \(D^{1,p,w}(\Omega)\).
\end{proof}

Now we would like to estimate the decay of the \(L^{q,w}\) norm of weak solutions outside balls of large radii in \(\Omega\).
\begin{lemma}\label{lem-tau}
Suppose \(q=\chi_wp\). If \(u\in D^{1,p,w}(\Omega)\) is a weak solution of \eqref{extremal-eq}, then there exists \(R_0>0\) and \(\tau>0\) such that if \(R\geq R_0\) then
\[
\norm{u}_{L^{q,w}(\Omega\setminus B_{R})}\leq \pt{\frac{R_0}{R}}^{\tau}\norm{u}_{L^{q,w}(\Omega\setminus B_{R_0})}.
\]
Here \(B_R\) denotes an arbitrary ball of radius \(R\).
\end{lemma}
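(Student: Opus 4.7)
My plan is to run a Moser/Caffarelli--Kohn--Nirenberg style annular energy argument, using the global Sobolev inequality \eqref{sob-ineq-w} and the criticality of $q=\chi_wp$ to obtain a self-improving annular estimate, then iterate it on dyadic annuli centered at the common center of the balls.

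\emph{Step 1: test function and Young's inequality.} Fix concentric balls and let $\eta\in C^\infty_c(\RR^N)$ be a cutoff with $\eta\equiv 0$ on $B_R$, $\eta\equiv 1$ on $\RR^N\setminus B_{2R}$, and $\abs{\nabla\eta}\leq C/R$. Using $\varphi=\eta^pu\in D^{1,p,w}(\Omega)$ as test function in \eqref{extremal-eq} and applying Young's inequality to the cross term $p\int \eta^{p-1}u\,w\abs{\nabla u}^{p-2}\nabla u\cdot\nabla\eta$, I obtain
\[
\int_\Omega \eta^p\abs{\nabla u}^pw\dx\leq C\int_\Omega \eta^pw\abs{u}^q\dx+C\int_\Omega\abs{u}^p\abs{\nabla\eta}^pw\dx.
\]
Combined with $\abs{\nabla(\eta u)}^p\leq C(\eta^p\abs{\nabla u}^p+\abs{u}^p\abs{\nabla\eta}^p)$ and the global Sobolev inequality \eqref{sob-ineq-w} applied to $\eta u$, this gives
\[
S_{p,w}^p\norm{\eta u}_{L^{q,w}}^p\leq C\int_\Omega \eta^pw\abs{u}^q\dx+C\int_\Omega\abs{u}^p\abs{\nabla\eta}^pw\dx.
\]

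\emph{Step 2: absorbing the critical term and the key scaling cancellation.} Since $\supp\eta\subseteq\Omega\setminus B_R$, Hölder with exponents $q/(q-p)$ and $q/p$ yields
\[
\int_\Omega\eta^pw\abs{u}^q\dx\leq \norm{u}_{L^{q,w}(\Omega\setminus B_R)}^{q-p}\norm{\eta u}_{L^{q,w}}^p.
\]
Because $u\in D^{1,p,w}(\Omega)\hookrightarrow L^{q,w}(\Omega)$, I can choose $R_0$ large enough that $C\norm{u}_{L^{q,w}(\Omega\setminus B_{R_0})}^{q-p}\leq S_{p,w}^p/2$; then for $R\geq R_0$ this term is absorbed into the left side. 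For the remaining gradient term, Hölder gives
\[
\int_\Omega\abs{u}^p\abs{\nabla\eta}^pw\dx\leq \frac{C}{R^p}w(B_{2R})^{1-p/q}\norm{u}_{L^{q,w}(B_{2R}\setminus B_R)}^p,
\]
and the critical relation $q=D_wp/(D_w-p)$ together with the doubling bound $w(B_{2R})\leq CR^{D_w}$ from \cref{local-sobolev-remark} gives $D_w(1-p/q)=p$, so the factors of $R$ \emph{cancel exactly}. I conclude
\[
\norm{u}_{L^{q,w}(\Omega\setminus B_{2R})}\leq \norm{\eta u}_{L^{q,w}}\leq C_0\norm{u}_{L^{q,w}(B_{2R}\setminus B_R)}.
\]

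\emph{Step 3: dyadic iteration.} Setting $\phi(R)=\norm{u}_{L^{q,w}(\Omega\setminus B_R)}^q$, the last display says $\phi(2R)\leq C_0^q(\phi(R)-\phi(2R))$, i.e.\ $\phi(2R)\leq\theta\phi(R)$ with $\theta=C_0^q/(1+C_0^q)<1$. Iterating for $R\geq R_0$ of the form $R=2^kR_0$ gives $\phi(2^kR_0)\leq\theta^k\phi(R_0)$, and for general $R\geq R_0$ I interpolate between consecutive dyadic scales. Writing $\theta=2^{-\tau q}$ with $\tau>0$ yields $\phi(R)\leq C(R_0/R)^{\tau q}\phi(R_0)$, and the leftover multiplicative constant is absorbed by mildly enlarging $R_0$. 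Taking $q$-th roots produces the claimed estimate.

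The main obstacle is the absorption step, which hinges on two things that must align: the global integrability $u\in L^{q,w}(\Omega)$ (so that the tail $\norm{u}_{L^{q,w}(\Omega\setminus B_{R_0})}$ can be made arbitrarily small) and the critical scale-invariant cancellation in the $\abs{\nabla\eta}^p$ term, which is exactly why the argument succeeds at the critical Sobolev exponent and would fail below it. Everything else is standard Moser-type manipulation justified by the fact that $\eta^pu$ is an admissible test function in $D^{1,p,w}(\Omega)$.
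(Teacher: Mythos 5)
Your proof is correct and follows essentially the same route as the paper's: same test function $\eta^p u$, same energy inequality via Young's inequality combined with the global Sobolev inequality, same Hölder absorption of the critical term using the smallness of the $L^{q,w}$ tail for $R\geq R_0$, the same scale-invariant cancellation $D_w(1-p/q)=p$ in the gradient term, and the same decreasing-function iteration $\phi(2R)\leq\theta\phi(R)$ on dyadic scales. The only cosmetic difference is in the final interpolation to non-dyadic $R$, which is immaterial.
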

\begin{remark}

Observe that in the case of unbounded \(\Omega\) the above gives an estimate near infinity of the \(L^{q,w}\) norm of \(u\). 

\end{remark}

\begin{proof}
Because \(u\in D^{1,p,w}(\Omega )\) then the function \(\varphi=\eta^p u\) for \(\eta\in W^{1,\infty}(\RR^N)\) is a valid test function in
\[
\int_\Omega \abs{\nabla u}^{p-2}\nabla u\nabla\varphi w = \int_\Omega \abs{u}^{q-2}u\varphi w.
\]
On the one hand, using \(ts\leq C_p\ve^{1-p}t^p+\ve s^{p'}\) for suitable small \(\ve>0\) (depending only on \(p\)), we obtain that
\begin{align*}
\int_\Omega \abs{\nabla u}^{p-2}\nabla u\nabla\varphi w &=\int_\Omega \abs{\nabla u}^{p-2}\nabla u\nabla(\eta^p u) w \\
&=\int_\Omega \abs{\nabla u}^{p-2}\nabla u\cdot\pt{\eta^p \nabla u+p\eta^{p-1}u\nabla\eta} w \\
&=\int_\Omega \abs{\eta\nabla u}^{p} w +p\int_\Omega \eta^{p-1}\abs{\nabla u}^{p-2}\nabla u\cdot u\nabla\eta w \\
&\geq \int_\Omega \abs{\eta\nabla u}^{p} w -p\int_\Omega \pt{\ve\abs{\eta\nabla u}^{p}+C_p\ve^{1-p}\abs{u\nabla\eta}^p} w \\
&\geq \pt{1-p\ve}\int_\Omega \abs{\eta\nabla u}^{p} w -C_p\ve^{1-p}\int_\Omega \abs{u\nabla\eta}^p w\\
&=\frac12\int_\Omega \abs{\eta\nabla u}^{p} w -C_p\int_\Omega \abs{u\nabla\eta}^p w.
\end{align*}

On the other hand, since \(q>p\) we can write
\begin{align*}
\int_\Omega \abs{u}^{q-2}u\varphi w &=\int_\Omega \abs{u}^q\eta^p w\\
&=\int_\Omega \abs{u}^{q-p}\abs{\eta u}^p w\\
&\leq \pt{\int_{\supp \eta} \abs{u}^{q} w}^{1-\frac{p}{q}}\pt{\int_\Omega \abs{\eta u}^{q} w}^{\frac{p}{q}}.
\end{align*}

Hence 
\begin{align*}
\int_\Omega \abs{\nabla(\eta u)}^{p} w &=\int_\Omega \abs{\eta\nabla u+u\nabla \eta}^{p} w\\
&\leq 2^{p-1}\int_\Omega \abs{\eta\nabla u}^{p} w+2^{p-1}\int_\Omega \abs{u\nabla \eta}^{p} w\\
&\leq 2^{p-1}\pt{2\int_\Omega \abs{\nabla u}^{p-2}\nabla u\nabla\varphi w +C_p\int_\Omega \abs{u\nabla\eta}^p w} +2^{p-1}\int_\Omega \abs{u\nabla \eta}^{p} w\\
&\leq C_p\int_\Omega \abs{u\nabla\eta}^p w+2^p\pt{\int_{\supp \eta} \abs{u}^{q} w }^{1-\frac{p}{q}}\pt{\int_\Omega \abs{\eta u}^{q} w}^{\frac{p}{q}},\\
\end{align*}
and the global Sobolev inequality \eqref{sob-ineq-w} tells us that there exists a constant \(S_{p,w}\) such that
\[
S_{p,w}^p\pt{\int_\Omega \abs{\eta u}^{q} w}^{\frac{p}{q}}\leq \int_\Omega \abs{\nabla(\eta u)}^{p} w
\]
therefore we obtain
\begin{equation}\label{p*bd1}
S_{p,w}^p\pt{\int_\Omega \abs{\eta u}^{q} w}^{\frac{p}{q}}\leq C_p\int_\Omega \abs{u\nabla\eta}^p w+2^p\pt{\int_{\supp \eta} \abs{u}^{q} w}^{1-\frac{p}{q}}\pt{\int_\Omega \abs{\eta u}^{q} w}^{\frac{p}{q}}.
\end{equation}

We now choose \(\eta\). First of all, because \(\norm{u}_{q,w}\) is finite for any given \(\ve>0\) we can find \(R_0=R_0(\ve)>0\) such that if \(R\geq R_0\) then 
\[
\int_{\Omega\setminus B_{R}}\abs{u}^{q} w \leq \ve.
\]
With this in mind we choose \(R_0>0\) such that 
\[
2^p\pt{\int_{\Omega\setminus B_{R_0}}\abs{u}^{q} w }^{1-\frac{p}q}\leq \frac{S_{p,w}^p}{2},
\]
and we suppose that \(R\geq R_0\) from now on. We consider \(\eta\in W^{1,\infty}(\RR^N)\) such that \(\eta(x)=0\) for \(x\in B_{R}\), \(\eta(x)=1\) for \(x\notin B_{2R}\), and \(\abs{\nabla\eta}\leq CR^{-1}\). If we use such \(\eta\) in \eqref{p*bd1} we obtain a constant \(C>0\) independent of \(R\) such that
\begin{equation}\label{p*bd2}
\pt{\int_\Omega \abs{\eta u}^{q} w}^{\frac1{q}}\leq C\pt{\int_\Omega \abs{u\nabla\eta}^p w}^{\frac1{p}}.
\end{equation}

Additionally, by the choice of \(\eta\) we can suppose that \(\abs{\nabla\eta}\leq CR^{-1}\) and we obtain
\begin{align}
\int_\Omega \abs{u\nabla\eta}^p w &\leq CR^{-p}\int_{\Omega\cap B_{2R}\setminus B_{R}} \abs{u}^p w \notag\\
&\leq CR^{-p}\pt{w(B_{2R})}^{1-\frac{p}{q}}\pt{\int_{\Omega\cap B_{2R}\setminus B_{R}} \abs{u}^{q} w}^{\frac{p}{q}}\notag\\
&\leq CR^{-p}\pt{w(B_{R_0})\pt{\frac{2R}{R_0}}^{D_w}}^{1-\frac{p}{q}}\pt{\int_{\Omega\cap B_{2R}\setminus B_{R}} \abs{u}^{q} w}^{\frac{p}{q}}\notag\\
&=C\pt{\frac{w(B_{R_0})}{R_0^{D_w}}}^{1-\frac{p}{q}}R^{D_w(1-\frac{p}q)-p}\pt{\int_{\Omega\cap B_{2R}\setminus B_{R}} \abs{u}^{q} w}^{\frac{p}{q}}\notag\\
&\leq CR^{D_w(1-\frac{p}q)-p}\pt{\int_{\Omega\cap B_{2R}\setminus B_{R}} \abs{u}^{q} w}^{\frac{p}{q}} \notag\\
&= C\pt{\int_{\Omega\cap B_{2R}\setminus B_{R}} \abs{u}^{q} w}^{\frac{p}{q}} \label{decay1}
\end{align}
where we have used \eqref{w-ball-estimate} and the fact that \(q=\chi_wp\). From \eqref{p*bd2} and \eqref{decay1} we obtain
\[
\int_\Omega \abs{\eta u}^{q} w \leq C\int_{\Omega\cap B_{2R}\setminus B_{R}} \abs{u}^{q} w,
\]
for some constant \(C>0\) depending on \(p,q, R_0\) but independent of \(R\). To continue, observe that since \(\eta\equiv 1\) on \(B_{2R}^c\) we can write
\begin{align*}
\int_{\Omega\setminus B_{2R}}\abs{u}^{q} w &\leq \int_{\Omega}\abs{\eta u}^{q} w\\
&\leq C\int_{\Omega\cap B_{2R}\setminus B_{R}} \abs{u}^{q}w\\
&= C\int_{\Omega\setminus B_{R}} \abs{u}^{q}w -C\int_{\Omega\setminus B_{2R}} \abs{u}^{q} w,
\end{align*}
thus, if \(\theta=\frac{C}{C+1}\in(0,1)\) then we obtain
\begin{equation}\label{bdf}
\int_{\Omega\setminus B_{2R}}\abs{u}^{q} w \leq \theta\int_{\Omega\setminus B_{R}} \abs{u}^{q} w.
\end{equation}

Consider now \(f(R)=\int_{\Omega\setminus B_{R}}\abs{u}^{q} w\), then \eqref{bdf} tells us that there exists \(\theta\in(0,1)\) such that for every \(R\geq R_0\) one has
\[
f(2R)\leq \theta f(R),
\]
in particular one could take \(R=R_n=2^nR_0\) for \(n\geq 0\) and conclude that \(f(2^{n}R_0)\leq \theta f(2^{n-1}R_0)\), or after iterating
\[
f(2^{n}R_0)\leq \theta^nf(R_0).
\]
Observe that if \(R\geq R_0\) then one can find \(n\geq 1\) such that \(2^{n-1}R_0\leq R<2^{n}R_0\) then \(n> \log_2(RR_0^{-1})\) and as a consequence we obtain 
\[
f(R)\leq f(2^{n}R_0)\leq \theta^n f(R_0)\leq \theta^{\log_2(RR_0^{-1})}f(R_0).
\]
and because \(x^{\log_2 y}=y^{\log_2x}\) so we have shown
\[
\int_{\Omega\setminus B_{R}}\abs{u}^{q} w\leq \pt{\frac{R_0}{R}}^{-\log_2\theta}\int_{\Omega\setminus B_{R_0}}\abs{u}^{q} w
\]
and the result is proved for \(\tau:=-\frac1q\log_2\theta>0\).
\end{proof}

\begin{lemma}\label{lema-tau2}
Suppose that \(q=\chi_wp\) and that \(u\in D^{1,p,w}(\Omega)\) is a weak solution of
\[
-\di(w\abs{\nabla u}^{p-2}\nabla u)=w\abs{u}^{q-2}u\qquad\text{in }\Omega.
\]
Then for each \(s>q\) there exists \(R_0>0\) (depending on \(s\)) such that if \(R\geq R_0\) then there exists \(C=C(p,q,w;s)>0\) for which 
\[
\norm{u}_{L^{s,w}(\Omega\setminus B_{2R})}\leq \frac{C}{R^{\frac{D_w-p}{p}-o_s(1)}}\norm{u}_{L^{q,w}(\Omega\setminus B_{R})},
\]
where \(o_s(1)\) is a quantity that goes to \(0\) as \(s\to \infty\).
\end{lemma}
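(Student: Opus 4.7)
The plan is to run a Moser-style iteration on annular regions $B_{h_{n+1}R}\setminus B_{h_n R}$ outside $B_R$, exploiting the smallness of $\|u\|_{L^{q,w}(\Omega\setminus B_R)}$ from \cref{lem-tau} to absorb the critical nonlinearity, and tracking how the $w(B)$ prefactors produced at each step translate into powers of $R$. A preliminary observation is that the global Sobolev inequality \eqref{sob-ineq-w} combined with doubling forces the matching lower bound $w(B_R)\geq cR^{D_w}$ for all sufficiently large $R$: testing \eqref{sob-ineq-w} against a cutoff $\eta$ with $\eta\equiv 1$ on $B_{R/2}$ and $|\nabla\eta|\leq C/R$ yields $c\,w(B_{R/2})^{1/q}\leq CR^{-1}w(B_R)^{1/p}$, and combining with $w(B_R)\leq\gamma_w w(B_{R/2})$ and the identity $(q-p)/(pq)=1/D_w$ (valid because $q=\chi_w p$) produces the claim.

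Next I would take $\varphi=\eta^p u|u|^{p(\alpha-1)}$ as test function for $\alpha\geq 1$, where $\eta\in W^{1,\infty}$ is supported in $B_{h_{n+1}R}\setminus B_{h_n R}$ with $h_n=2-2^{-n}$ and $|\nabla\eta|\leq C 2^n/R$. Setting $v=|u|^\alpha$ and applying Young's inequality to the cross-term as in the proof of \cref{thm-local-inte}, I reach a Caccioppoli-type estimate
\[
\int w\eta^p|\nabla v|^p\leq C\alpha^p\int w(v|\nabla\eta|)^p+C\alpha^p\int w\eta^p|u|^{q-p}v^p.
\]
The critical term is controlled by Hölder's inequality with exponents $q/(q-p)$ and $q/p=\chi_w$, followed by \eqref{local-Sobolev} on a ball of radius comparable to $R$ containing $\supp\eta$, bounding it by $C\|u\|_{L^{q,w}(\supp\eta)}^{q-p}\cdot R^p w(B)^{-p/D_w}\int w|\nabla(\eta v)|^p$. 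Since $R^p w(B)^{-p/D_w}$ is controlled by doubling, \cref{lem-tau} lets me pick $R_0=R_0(s)$ large enough that this coefficient is less than $1/4$, allowing the $|\eta\nabla v|^p$ part to be absorbed on the left. A final application of \eqref{local-Sobolev} then produces the iteration step
\[
\|u\|_{L^{q_{n+1},w}(\Omega\setminus B_{h_{n+1}R})}\leq(C\alpha_n 2^n)^{1/\alpha_n}w(B_{h_{n+1}R})^{-1/(D_w\alpha_n)}\|u\|_{L^{q_n,w}(\Omega\setminus B_{h_n R})},
\]
with $\alpha_n=\chi_w^{n+1}$ and $q_n=p\alpha_n$, so that $q_0=q$.

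Iterating $N$ times until $q_N\geq s$, the prefactor constants multiply to a bounded quantity since $\sum\chi_w^{-n}<\infty$, while the $w$-factors telescope to $w(B_R)^{-(1-\chi_w^{-N})/(D_w(\chi_w-1))}$. Using the lower bound $w(B_R)\geq cR^{D_w}$ together with $1/(\chi_w-1)=(D_w-p)/p$ and $\chi_w^{-N}=q/q_N\leq q/s$, this factor is bounded by $CR^{-(D_w-p)/p+D_w/s}$, matching the claimed rate with $o_s(1)=D_w/s$; a standard Hölder interpolation between $L^{q_N,w}$ and $L^{q,w}$ reduces to $L^{s,w}$ when $q_N>s$. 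The main obstacle is the absorption step, whose coefficient involves $\alpha_n^p$ growing along the iteration, forcing $R_0$ to depend on $s$ (equivalently on the number of iterations $N$). Because $s$ is fixed and only finitely many iterations are carried out, and because \cref{lem-tau} gives power-type decay of $\|u\|_{L^{q,w}(\Omega\setminus B_R)}$, this dependence is harmless and matches the statement.
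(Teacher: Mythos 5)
Your plan is essentially the same one the paper implements: Moser iteration on nested exterior regions $\Omega\setminus B_{h_nR}$ with $h_n\to 2$, absorbing the critical term $|u|^{q-p}$ via H\"older together with the smallness of $\norm{u}_{L^{q,w}(\Omega\setminus B_{R_0})}$ furnished by \cref{lem-tau}, and reading off the rate in $R$ from the cutoff gradients $|\nabla\eta|\lesssim 1/R$. The final bookkeeping ($\alpha_n=\chi_w^{n+1}$, $q_n=p\alpha_n$, the geometric sum giving $p/(q-p)=(D_w-p)/p$ and $o_s(1)=D_w/s$) is consistent.

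There is, however, a genuine gap at the step where you invoke the \emph{local} Sobolev inequality \eqref{local-Sobolev} ``on a ball of radius comparable to $R$ containing $\supp\eta$''. Inequality \eqref{local-Sobolev} requires the test function to be compactly supported inside the ball, but here $\eta v$ is supported on the whole exterior region $\Omega\setminus B_{h_nR}$ (your own iteration scheme forces $\eta\equiv 1$ outside $B_{h_{n+1}R}$, otherwise the conclusion would be about a thin annulus rather than $\Omega\setminus B_{2R}$). For unbounded $\Omega$ no ball of radius comparable to $R$ contains $\supp(\eta v)$, so the estimate $\bigl(\int_\Omega|\eta v|^qw\bigr)^{p/q}\le R^pw(B)^{-p/D_w}\int_\Omega|\nabla(\eta v)|^pw$ is not justified as written. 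The paper avoids this entirely by using the \emph{global} Sobolev inequality \eqref{sob-ineq-w}, which is a standing assumption in this section and has no $w(B)$ prefactor; the $R$-dependence then comes solely from $|\nabla\eta|$. Once you make that substitution, your preliminary observation $w(B_R)\gtrsim R^{D_w}$ (which is correct, and derived from \eqref{sob-ineq-w} plus doubling) becomes unnecessary: it was only introduced to cancel the spurious $w(B)^{-p/D_w}$ factor. Also note that the description ``$\eta$ supported in $B_{h_{n+1}R}\setminus B_{h_nR}$'' should read that $\eta$ vanishes on $B_{h_nR}$, equals $1$ outside $B_{h_{n+1}R}$, and only $\nabla\eta$ is supported on that annulus. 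With those corrections the argument closes and coincides with the paper's.
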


\begin{proof}
Firstly notice that thanks to the \(L^{s,w}\) interpolation inequality it is enough to exhibit a sequence \(s_n\sublim\limits_{n\to\infty}+\infty\) for which one has
\[
\norm{u}_{L^{s_n,w}(\Omega\setminus B_{2R})}\leq \frac{C}{R^{\frac{p}{q-p}-o_n(1)}}\norm{u}_{L^{q,w}(\Omega\setminus B_{R})}.
\]
Then we observe that in the context of \eqref{model-eq} we can view the equation as
\[
\di\A=\B
\]
where \(b=c=e=f=g=0\) and \(d=\abs{u}^{q-p}\). The assumption \(u\in D^{1,p,w}(\Omega)\) tells us that \(\varphi=\eta^pG(u)\) is a valid test function and we can follow the notation of the proof \cref{thm-local-bdd}, in fact, since \(e=f=g=0\) we can further suppose that \(k>0\) is arbitrary in the definition of both \(F\) and \(G\). Starting with \eqref{bas-esti0} we now integrate over \(\Omega\) to obtain
\[
\int_{\Omega}\abs{\eta\nabla v}^pw\leq p\int_{\Omega}\abs{v\nabla \eta}\abs{\eta\nabla v}^{p-1}w+(\beta+1)\alpha^{p-1}\int_{\Omega}d\abs{v\eta}^pw,
\]
where \(v=F(\bar u)\) and \(\beta=1+p(\alpha-1)\). From the above and the global Sobolev inequality \eqref{sob-ineq-w} we obtain
\[
\pt{\int_\Omega \abs{\eta v}^qw}^{\frac{p}q}\leq C_\alpha\pt{\int_{\Omega}\abs{v\nabla \eta}^pw+\int_{\Omega}d\abs{v\eta}^pw},
\]
for some \(C_\alpha=O(\alpha^p)\). We can pass to the limit \(l\to\infty\) the above inequality to deduce that
\[
\pt{\int_\Omega \abs{\eta \bar u^\alpha}^qw}^{\frac{p}q}\leq C_\alpha\pt{\int_{\Omega}\abs{\nabla \eta }^p\abs{\bar u}^{\alpha p}w+\int_{\Omega}\eta^p\abs{u}^{q-p}\abs{\bar u}^{p\alpha}w},
\]
where we have used \(d=\abs{u}^{q-p}\). If we pass to the limit \(k\to 0^+\) then we reach 
\[
\pt{\int_\Omega \abs{\eta u^\alpha}^qw}^{\frac{p}q}\leq C_\alpha\pt{\int_{\Omega}\abs{\nabla \eta }^p\abs{u}^{\alpha p}w+\int_{\Omega}\eta^p\abs{u}^{q+p(\alpha-1)}w}.
\]

Observe that because \(q>p\) we can do the following estimate 
\begin{align*}
\int_{\Omega}\eta^pu^{q+p(\alpha-1)} w &=\int_\Omega u^{q-p}\pt{\eta u^{\alpha}}^p w\\
&\leq \pt{\int_{\supp \eta} \abs{u}^{q} w}^{1-\frac{p}{q}}\pt{\int_\Omega \abs{\eta u^{\alpha}}^{q} w}^{\frac{p}{q}},
\end{align*}
therefore we have
\[
\pt{\int_\Omega \abs{\eta u^\alpha}^qw}^{\frac{p}q}\leq C_\alpha\int_{\Omega}\abs{\nabla \eta }^p\abs{u}^{\alpha p}w+C_\alpha\pt{\int_{\supp \eta} \abs{u}^{q} w}^{1-\frac{p}{q}}\pt{\int_\Omega \abs{\eta u^{\alpha}}^{q} w}^{\frac{p}{q}}.
\]

We now select \(\eta\). Because \(u\in D^{1,p,w}(\Omega)\) and that \eqref{sob-ineq-w} holds then we know that \(u\in L^{q,w}(\Omega)\), therefore for any given \(\nu>0\) we can find \(R_0=R_0(\nu)>0\) such that
\[
\int_{\Omega\cap\set{\abs{x}\geq R}}\abs{u}^{q} w \leq \nu,\qquad\forall\, R\geq R_0.
\]
With this in mind we choose \(R_0=R_0(\alpha)>0\) such that 
\[
C_\alpha\pt{\int_{\Omega\cap\set{\abs{x}\geq R_0}} \abs{u}^{q}w}^{1-\frac{p}q}\leq \frac{1}{2},
\]
and we suppose that \(R\geq R_0\).

For \(n\geq 0\) we consider \(R_n=R(2-2^{-n})\) and a smooth non-negative function \(\eta\) such that \(\eta(x)=0\) for \(\abs{x}\leq R_{n}\), \(\eta(x)=1\) for \(\abs{x}\geq R_{n+1}\) and satisfies \(\abs{\nabla\eta}\leq \frac{C2^n}{R}\), 
\begin{gather*}
\supp\eta\subseteq \Omega\setminus B_{R_n}\\
\supp\nabla\eta \subseteq \Omega\cap B_{R_{n+1}}\setminus B_{R_{n}},
\end{gather*}
therefore
 so that
\[
\pt{\int_\Omega \abs{\eta u^\alpha}^qw}^{\frac{1}{\alpha q}}\leq 2^{\frac1{\alpha p}}C_\alpha^{\frac{1}{p\alpha}}\pt{\int_{\Omega}\abs{\nabla \eta }^p\abs{u}^{\alpha p}w}^{\frac1{p\alpha}}.
\]
and if for \(n\geq 1\) we take \(\alpha_n=\pt{\frac{q}{p}}^{n}\) then we obtain
\[
\pt{\int_{\Omega\setminus B_{R_{n+1}}}\abs{u}^{\frac{q^{n+1}}{p^{n}}} w}^{\frac{p^{n}}{q^{n+1}}}\leq 2^{\frac{p^{n-1}}{q^{n}}}C_{\alpha_n}^{\frac{p^{n-1}}{q^{n}}} R^{-\frac{p^{n}}{q^{n}}}\pt{\int_{\Omega\setminus B_{R_n}}\abs{u}^{\frac{q^{n}}{p^{n-1}}} w}^{\frac{p^{n-1}}{q^{n}}},
\]
or equivalently, if \(s_n=\frac{q^{n}}{p^{n-1}}\) and \(\mathcal U_n=\norm{u}_{L^{s_n,w}(\Omega\setminus B_{R_{n}})}\), 
\[
\mathcal U_{n+1}\leq \frac{2^{\frac{p^{n-1}}{q^{n}}}C_{\alpha_n}^{\frac{p^{n-1}}{q^{n}}}}{R^{\frac{p^{n}}{q^n}}}\mathcal U_n,
\]
which after iterating gives
\[
\mathcal U_n\leq \pt{\frac{\prod_{i=1}^{n-1} 2^{\frac{p^{i-1}}{q^{i}}}C_{\alpha_i}^{\frac{p^{i-1}}{q^{i}}}}{R^{\sum_{i=1}^{n-1}\pt{\frac{p}{q}}^i}}}\mathcal U_1.
\]

If we observe that \(C_\alpha=O(\alpha^p)\) so that for every large \(i\) we have 
\[
C_{\alpha_i}^{\frac{p^{i-1}}{q^{i}}}\leq C^{\frac{p^{i-1}}{q^{i}}} \pt{\frac{q}{p}}^{i\frac{p^{i}}{q^{i}}}
\]
for some constant \(C>0\), and thus the product \(\prod_{i=1}^{n-1} 2^{\frac{p^{i-1}}{q^{i}}}C_{\alpha_i}^{\frac{p^{i-1}}{q^{i}}}\) is convergent because \(q>p\), then the result follows by noticing that \(\mathcal U_1\leq \norm{u}_{L^{q,w}(\Omega\setminus B_R)}\), \(\mathcal U_n\geq \norm{u}_{L^{s_n,w}(B_{2R})}\), and that
\[
\sum_{i=1}^{n-1}\pt{\frac{p}{q}}^i
=\frac{p}{q-p}-\frac{q}{q-p}\pt{\frac{p}{q}}^{n}=\frac{p}{q-p}-o_n(1),
\]
because \(q=\frac{D_wp}{D_w-p}>p\).
\end{proof}

Now we are in position to prove \cref{decay-thm}:

\begin{proof}[Proof of \cref{decay-thm}]
Consider the value of \(R_0>0\) given in \cref{lem-tau}, and suppose that \(x\in \Omega\setminus B_{2R_0}\). Fix \(0<r<\frac{R_0}{4}\) so that \(B_{2r}(x)\subseteq\Omega\) and use \cref{ext-bd-local} to obtain
\[
\abs{u(x)}\leq \norm{u}_{L^{\infty}(B_r(x))}\leq C_r\norm{u}_{L^{p}(B_r(x))}\leq C_{r,s}\norm{u}_{L^{s}(B_r(x))},
\]
for any \(s>p\). If we consider \(R=\frac{\abs{x}}4\), then by geometric considerations we deduce that \(B_{2r}(x)\subseteq \Omega\setminus B_{2R}\) hence
\[
\norm{u}_{L^{s,w}(B_{2r}(x))}\leq \norm{u}_{L^{s,w}(\Omega\setminus B_{2R})}.
\]

Now we select \(s>q\) large enough so that \(o_s(1)\leq \frac{\tau}{2}\) in \cref{lema-tau2}, where \(\tau>0\) is taken from \cref{lem-tau}, by doing that we obtain
\begin{align*}
\norm{u}_{L^{s,w}(\Omega\setminus B_{2R})}&\leq \frac{C}{R^{\frac{p}{q-p}-o_s(1)}}\norm{u}_{L^{q,w}(\Omega\setminus B_{R})}\\
&\leq \frac{C}{R^{\frac{p}{q-p}-\frac{\tau}2}}\norm{u}_{L^{q,w}(\Omega\setminus B_{R})}\\
&\leq \frac{C}{R^{\frac{p}{q-p}-\frac{\tau}{2}}}\pt{\frac{R_0}{R}}^{\tau}\norm{u}_{L^{q,w}(\Omega\setminus B_{R_0})},
\end{align*}
therefore, by putting all together we obtain
\[
\abs{u(x)}\leq \frac{CR_0^{\tau}}{R^{\frac{p}{q-p}+\frac{\tau}{2}}}\norm{u}_{L^{q,w}(\Omega\setminus B_{R_0})}=\frac{C}{\abs{x}^{\frac{p}{q-p}+\lambda}},
\]
for some constant \(C>0\) independent of \(\abs{x}\geq 2R_0\), and the result is proved for \(\tilde R=2R_0\).
\end{proof}

\section{A better decay estimate for some particular weights \(w\)}\label{examples-sect}

The result from \cref{decay-thm} can be improved if one knows two facts regarding the weighted \(p\)-Laplace operator \(L_w(u)=-\di(w\abs{\nabla u}^{p-2}\nabla u)\):
\begin{itemize}
\item The operator satisfies a comparison principle, meaning that if \(L_w(u)\leq L_w(v)\) weakly in \(\Omega\) and \(u\leq v\) over \(\domega\) then \(u\leq v\) in \(\Omega\).
\item There exists a constant \(C>0\) such that
\[
L_w(\abs{x}^{s_1})\geq Cw\abs{x}^{s_2}
\]
for suitable \(s_1,s_2\) and large \(\abs{x}\).
\end{itemize}

The first condition is easily verified in general (and in a stronger version for particular weights) as one can see in \cref{sec:comparison}, however the second condition depends heavily on the type of weight. In this section we discuss some particular cases that have been of interest recently.

\subsection{The unweighted case \(w=1\)}

As we mentioned in the introduction, the decay estimate we have obtained is an adaptation of a result from \cite{CPY2012} in the unweighted case, namely in that work they prove

\begin{proposition}[Lemma B.3 in \cite{CPY2012}]
If \(u\in W^{1,p}(\RR^N)\) is a weak solution of
\[
-\Delta_p u=\abs{u}^{p^*-2}u
\]
then there exists \(\lambda>0\) such that
\[
\abs{u(x)}\leq \frac{C}{1+\abs{x}^{\frac{N-p}{p}+\lambda}}.
\]
\end{proposition}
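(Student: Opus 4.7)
The plan is to view this proposition as a direct corollary of \cref{decay-thm} specialized to the unweighted case $w \equiv 1$, combined with local boundedness to handle the bounded region. First I would verify the hypotheses of \cref{decay-thm}: the doubling property holds trivially with $\gamma_w = 2^N$, so the dimension becomes $D_w = N$ and $\chi_w = N/(N-p)$; the condition $0 < w < \infty$ is obvious; the local weighted $(1,p)$-Poincaré inequality \ref{weighted-poincare} reduces to the classical Poincaré inequality on balls; and the global weighted Sobolev inequality \eqref{sob-ineq-w} becomes the classical Sobolev embedding $W^{1,p}(\RR^N) \hookrightarrow L^{p^*}(\RR^N)$ with $q = p^* = Np/(N-p) = \chi_w p$. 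The density of $C^\infty_c(\RR^N)$ in $W^{1,p}(\RR^N)$ together with the Sobolev inequality places $u$ in $D^{1,p,1}(\RR^N)$, so the setting matches \eqref{extremal-eq}.

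Applying \cref{decay-thm} then produces constants $R_0 > 0$, $C_\infty > 0$ and $\lambda > 0$ such that
\[
|u(x)| \leq \frac{C_\infty}{|x|^{(N-p)/p + \lambda}} \quad \text{for all } |x| \geq R_0.
\]
To upgrade this tail estimate to the global form $|u(x)| \leq C/(1+|x|^{(N-p)/p+\lambda})$, I would invoke \cref{ext-bd-local} on the compact set $\overline{B_{R_0}(0)}$: covering it by finitely many balls of radius less than $R_0/5$ and applying the local $L^\infty$ bound yields $\|u\|_{L^\infty(B_{R_0})} \leq M$ for some finite $M$. For $|x| \leq R_0$ this implies $|u(x)| \leq M(1+R_0^{(N-p)/p+\lambda})/(1+|x|^{(N-p)/p+\lambda})$, while for $|x| \geq R_0$ the decay estimate is already of the desired form after multiplying numerator and denominator by a bounded factor. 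Taking the maximum of the two constants produces the claimed inequality.

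The main point is conceptual rather than technical: \cref{decay-thm} is designed to encompass precisely this unweighted result, so once $D_w$, $\chi_w$, and $q$ are identified with $N$, $N/(N-p)$, and $p^*$ respectively, the proof reduces to an application of the general theorem together with the standard $L^\infty_{\mathrm{loc}}$ estimate. The only mild verification is that the notion of weak solution for $u \in W^{1,p}(\RR^N)$ is compatible with that used in $D^{1,p,1}(\RR^N)$, which is immediate since $C^\infty_c(\RR^N)$ is a common dense set of admissible test functions and the right-hand side $|u|^{p^*-2}u$ lies in $L^{(p^*)'}(\RR^N)$ thanks to the Sobolev inequality.
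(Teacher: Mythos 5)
Your proposal is correct and matches the paper's intent: the paper states this proposition as a quoted special case (Lemma B.3 of \cite{CPY2012}) of the general machinery, and \cref{decay-thm} with \(w\equiv 1\), \(D_w=N\), \(q=p^*\) is precisely its unweighted instance, with \cref{ext-bd-local} supplying the bound on the bounded region exactly as you describe. The identifications of \(D_w\), \(\chi_w\), \(q\) and the inclusion \(W^{1,p}(\RR^N)\subset D^{1,p}(\RR^N)\) are all verified correctly, so nothing further is needed.
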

The above result is improved by using the comparison principle for \(\Delta_p\) to obtain

\begin{proposition}[Proposition B.1 in \cite{CPY2012}]
If \(u\in W^{1,p}(\RR^N)\) is a weak solution of
\[
-\Delta_p u=\abs{u}^{p^*-2}u
\]
then
\[
\abs{u(x)}\leq \frac{C}{1+\abs{x}^{\frac{N-p}{p-1}-\ve}}
\]
for every \(\ve>0\).
\end{proposition}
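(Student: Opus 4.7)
The plan is to bootstrap from the weak decay \(\abs{u(x)} \leq C(1+\abs{x})^{-(N-p)/p-\lambda}\) (given by the preceding proposition) via comparison with explicit barrier functions of the form \(v(x) = M\abs{x}^{-s}\) with \(s\) close to \((N-p)/(p-1)\) from below. The key algebraic fact is the identity \(p^{*} - p = p^{2}/(N-p)\): rewriting the equation as
\[
-\Delta_{p} u = V(x)\,\abs{u}^{p-2}u, \qquad V(x) := \abs{u(x)}^{p^{*}-p},
\]
the weak decay bound yields
\[
V(x) \leq C(1+\abs{x})^{-p-\mu}, \qquad \mu := \tfrac{\lambda p^{2}}{N-p} > 0,
\]
so \(V\) decays strictly faster than the critical rate \(\abs{x}^{-p}\).

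Next, a direct radial computation shows that, for \(s > 0\),
\[
-\Delta_{p}\bigl(M\abs{x}^{-s}\bigr) = M^{p-1}\,s^{p-1}\bigl(N-p-s(p-1)\bigr)\,\abs{x}^{-s(p-1)-p},
\]
which is strictly positive exactly when \(s < (N-p)/(p-1)\). Fixing any such \(s\), the inequality \(-\Delta_{p} v \geq V(x)\,v^{p-1}\) with \(v = M\abs{x}^{-s}\) reduces, after cancelling \(M^{p-1}\abs{x}^{-s(p-1)}\), to
\[
V(x)\abs{x}^{p} \leq s^{p-1}\bigl(N-p-s(p-1)\bigr),
\]
and by the previous step this holds on \(\set{\abs{x} \geq R}\) once \(R = R(s)\) is taken large enough; crucially, \(R\) is independent of the multiplier \(M\).

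Having fixed \(s\) and \(R\), I would use the local boundedness of \(u\) (already part of the machinery developed in \cref{sec-apriori}) to pick \(M\) so large that \(M\abs{x}^{-s} \geq \abs{u(x)}\) on the sphere \(\set{\abs{x} = R}\). Applying the weak comparison principle for the \(p\)-Laplacian on the annulus \(R < \abs{x} < R'\) to the super-solution \(v\) and the sub-solution \(\abs{u}\) of \(-\Delta_{p} w = V(x) w^{p-1}\), and sending \(R' \to \infty\) using that both \(v\) and \(u\) vanish at infinity, I obtain \(\abs{u(x)} \leq M\abs{x}^{-s}\) for all \(\abs{x} \geq R\). Writing \(\ve := (N-p)/(p-1) - s > 0\) concludes, since the region \(\abs{x}<R\) is handled by the local bound.

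The main obstacle is the comparison step itself: one must compare a potentially sign-changing \(u\) with a positive barrier \(v\) through the nonlinear operator \(w \mapsto -\Delta_{p} w - V(x)\abs{w}^{p-2}w\) on an unbounded exterior domain. The standard remedy is to apply comparison to \(\abs{u}\), which satisfies the corresponding sub-solution inequality in the weak sense by a Kato-type argument (or equivalently to treat \(u_{+}\) and \(u_{-}\) separately), and to handle the unboundedness by an exhaustion of annuli whose outer boundary contributions vanish thanks to the decay of \(v\) and \(u\) at infinity. A suitable version of this comparison principle is precisely what is announced in the paper's comparison section; specialising it to \(w \equiv 1\) supplies what is needed here.
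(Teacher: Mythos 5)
Your overall strategy --- bootstrapping the weak decay of the preceding proposition through comparison with barriers \(M\abs{x}^{-s}\), \(s<\frac{N-p}{p-1}\) --- is exactly the one the paper uses for its weighted analogues (\cref{decay-mono} and the power-weight case); the proposition itself is only quoted from \cite{CPY2012}. Your radial computation of \(-\Delta_p(M\abs{x}^{-s})\), the identity \(p^*-p=p^2/(N-p)\) giving \(V(x)\leq C(1+\abs{x})^{-p-\mu}\), and the observation that \(R\) can be chosen independently of \(M\) are all correct.

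The gap is in the comparison step. You compare \(\abs{u}\) and \(v=M\abs{x}^{-s}\) as sub- and supersolution of \(-\Delta_p w=V(x)\abs{w}^{p-2}w\) with \(V=\abs{u}^{p^*-p}\geq 0\), and you assert that a suitable comparison principle is "precisely what is announced in the paper's comparison section." It is not: \cref{comparison-w} and \cref{comparison-mono} concern the operator \(-\di(w\abs{\nabla u}^{p-2}\nabla u)\) with \emph{no} zeroth-order term, and a comparison principle for \(w\mapsto-\Delta_p w-V(x)\abs{w}^{p-2}w\) with \(V\geq0\) is false in general, since the zeroth-order term has the wrong sign for monotonicity (a first Dirichlet eigenfunction of \(-\Delta_p\) already violates comparison with the zero function). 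Concretely, testing the difference of the two inequalities with \((\abs{u}-v)_+\) leaves the term \(\int V\bigl(\abs{u}^{p-1}-v^{p-1}\bigr)(\abs{u}-v)_+\geq0\) on the right, which cannot simply be discarded; to absorb it one needs an extra ingredient (Hardy's inequality on the exterior domain combined with the smallness \(V\leq\delta\abs{x}^{-p}\) for \(\abs{x}\geq R\)) that you neither state nor supply. The clean fix --- and what the paper actually does in the proof of \cref{decay-mono} --- is to freeze the right-hand side using the a priori decay, \(-\Delta_p u_+\leq\abs{u}^{p^*-1}\leq C\abs{x}^{-t(p^*-1)}\), and to choose the barrier exponent \(s\) with \(s(p-1)+p\leq t(p^*-1)\), so that \(-\Delta_p(M\abs{x}^{-s})\geq C\abs{x}^{-t(p^*-1)}\) and the comparison runs through the \emph{pure} \(p\)-Laplacian, to which \cref{comparison-w} with \(w\equiv1\) does apply. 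The price is that one cannot reach \(s=\frac{N-p}{p-1}-\ve\) in a single step: each application only improves the decay exponent from \(t\) to \(t+\sigma\), so the argument must be iterated finitely many times, exactly as in \cref{decay-mono}.
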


\subsection{Monomial weights: \(w=x^A\)}

In the case of monomial weights
\[
w_A(x)=x^A=\abs{x_1}^{a_1}\cdot\ldots\cdot \abs{x_N}^{a_N}
\]
we can also improve the decay estimate from \cref{decay-thm}. It is clear that \(w_A\) verifies \eqref{inv-loc-int2} and that \(w_i(x):=\abs{x_i}^{a_i}\) belongs to \(L^1_{loc}(\RR)\) for \(a_i>-1\). Also  \(w_i\) is doubling (see for instance \cite[Corollary 15.35]{HeKiMa2006}), moreover, if \(a_i\geq 0\) one can obtain the best doubling constant \(\gamma_w\) as follows: observe that for \(B_R(x_0)=(x_0-R,x_0+R)\) we have
\[
w_i(B_R(x_0))=R^{1+a_i}w_i(B_1(\frac{x_0}R)),
\]
therefore
\[
\frac{w_i(B_{2R}(x_0))}{w_i(B_R(x_0))}=2^{1+a_i}\frac{w_i(B_{1}(\frac{x_0}{2R}))}{w_i(B_1(\frac{x_0}R))}\leq 2^{1+a_i}
\]
because of the following
\begin{lemma}\label{doub.const-pow}
For any \(x_0\in \RR\) and \(a>0\) we have
\[
w_a(B_1(x_0))\leq w_a(B_1(2x_0)),
\]
with equality if \(x_0=0\).
\end{lemma}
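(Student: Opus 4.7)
The plan is to reduce to the case $x_0\geq 0$ by symmetry, and then prove that the function
\[
g(x_0) := w_a(B_1(x_0)) = \int_{x_0-1}^{x_0+1}\abs{t}^a\D t
\]
is non-decreasing on $[0,\infty)$. Since the integrand $\abs{t}^a$ is even, the change of variables $t\mapsto -t$ shows that $g$ itself is even: $g(-x_0)=g(x_0)$. In particular, $g(2x_0)=g(2\abs{x_0})$ for every $x_0\in\RR$, so once monotonicity on $[0,\infty)$ is established, we conclude
\[
g(x_0)=g(\abs{x_0})\leq g(2\abs{x_0})=g(2x_0)
\]
for every $x_0\in\RR$, and the case of equality at $x_0=0$ is immediate because the two balls coincide.

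To prove monotonicity on $[0,\infty)$, I would apply the Leibniz rule (legitimate since $\abs{t}^a$ is continuous on $\RR$ for $a>0$) to obtain
\[
g'(x_0) = \abs{x_0+1}^a - \abs{x_0-1}^a.
\]
For $x_0\geq 0$ one has $x_0+1>0$, so $\abs{x_0+1}=x_0+1$, while $\abs{x_0-1}=\max(x_0-1,1-x_0)\leq x_0+1$ (the latter inequality is equivalent to $x_0\geq 0$). Since $a>0$, the function $r\mapsto r^a$ is non-decreasing on $[0,\infty)$, giving $g'(x_0)\geq 0$ as required.

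There is no genuine obstacle here: the lemma is elementary. The only point worth a moment's attention is handling the absolute value when $x_0-1$ changes sign, which is absorbed cleanly by writing the derivative in terms of $\abs{x_0\pm 1}^a$ rather than splitting into cases according to whether $x_0\geq 1$ or $0\leq x_0<1$.
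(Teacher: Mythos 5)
Your proof is correct, and it takes a genuinely different route from the paper. The paper does not prove this lemma directly: it observes that it is the $N=1$ case of \cref{doub.const-pow2} and proves that $N$-dimensional statement by a set-decomposition argument, splitting into the case $\abs{x_0}\geq 2$ (where $B_1(x_0)$ and $B_1(2x_0)$ are disjoint and one compares the supremum of $\abs{x}^a$ on the first ball with its infimum on the second) and the case $\abs{x_0}<2$ (where one compares the weight of the two pieces of the symmetric difference, using that $\abs{x}^2<1+2\abs{x_0}^2$ on $B_1(x_0)\setminus B_1(2x_0)$ and the reverse on $B_1(2x_0)\setminus B_1(x_0)$, together with the equality of their Lebesgue measures). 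Your argument instead establishes the stronger fact that $s\mapsto w_a(B_1(s))$ is even and non-decreasing on $[0,\infty)$, via the fundamental theorem of calculus and the elementary inequality $\abs{x_0-1}\leq\abs{x_0+1}$ for $x_0\geq 0$; the factor $2$ plays no role. This is shorter and yields monotonicity in $\abs{x_0}$, not just the doubling comparison, but it is intrinsically one-dimensional: the derivative computation does not transfer to $\RR^N$ with $N>1$, which is why the paper's measure-theoretic comparison is the one that carries the general case \cref{doub.const-pow2} needed for the power weights $\abs{x}^a$ on $\RR^N$.
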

We will prove this lemma in \cref{sec:doubling}, but observe that as a consequence we deduce that for the cube\footnote{So far we have worked with balls, but the geometry of \(\RR^N\) allows us to work with cubes instead by only including a geometric constant when needed.} \(B_R=\prod_{i=1}^N(x_{0,i}-R,x_{0,i}+R)\) one has
\[
\frac{w_A(B_{2R}(x_0))}{w_A(B_R(x_0))}\leq 2^{N_A}
\]
for \(N_A=N+a_1+a_2+\ldots+a_N\) and that \(N_A\) is the smallest possible choice for the exponent on the right-hand side. This shows that \(D_A=N_A\), which agrees with \cite{CR-O2013-2,Cas2016-2} where it was proved that for every \(A=(a_1,\ldots,a_N)\in\RR^N\) with \(a_i\geq 0\) there exists a constant \(S_{p,A}>0\) such that
\begin{equation}\label{sob-ineq-mono}
S_{p,A}\pt{\int_{\RR^N_A}\abs{u}^qx^A\dx}^{\frac1q}\leq \pt{\int_{\RR^N_A}\abs{\nabla u}^px^A\dx}^{\frac1p},\qquad\forall\, u\in C^\infty_c(\RR^N_A)
\end{equation}
where \(q=\frac{N_Ap}{N_A-p}\) and \(\RR^N_A\) is defined as
\[
\RR^N_A=\set{(x_1,\ldots,x_N)\in \RR^N: x_i>0 \text{ whenever } a_i>0},
\]
therefore the Sobolev exponent \(q\) from \eqref{sob-ineq-mono} coincides with the local Sobolev exponent obtained in \cref{local-sobolev-remark} from the doubling constant \(\gamma_A=2^{N_A}\).

Observe also that \cite[Section 5]{CasCo2022} tells us that the weight \(w_A\) verifies
\[
\int_{B}\abs{u-u_{B,x^A}}^px^A\D x\leq Cl(B)^p\int_{B}\abs{\nabla u}^px^A\D x \qquad \text{ for all balls }B\subset \RR^N_A,
\]
so that \ref{weighted-poincare} is readily satisfied for \(w_A\).

If \(H^{1,p,A}_0(\RR^N_A)\) denotes the closure of \(C^\infty_c(\RR^N_A)\) then the results of \cref{sec-infty} are valid, and with the aid of the comparison principle \cref{comparison-w} we obtain the following

\begin{theorem}\label{decay-mono}
Suppose \(A=(a_1,a_2,\ldots,a_N)\in\RR^N\), \(N_A>p\) with \(a_i\geq 0\) for all \(i\). If \(u\in H^{1,p,A}_0(\RR^N_A)\) is a weak solution of
\[
\left\{
\begin{aligned}
-\di(x^{A}\abs{\nabla u}^{p-2}\nabla u)&=x^A\abs{u}^{q-2}u&&\text{in }\RR^N_A,\\
u&=0&&\text{on }\partial(\RR^N_A),
\end{aligned}
\right.
\]
where \(q=\frac{N_Ap}{N_A-p}\). Then
\[
\abs{u(x)}\leq \frac{C}{1+\abs{x}^{\frac{N_A-p}{p-1}-\ve}}
\]
for every \(\ve>0\).
\end{theorem}

\begin{proof}
From \cref{decay-thm} we know that there exists \(\lambda>0\) and constants \(R_0\geq 1\), \(C_1>0\) such that
\[
\abs{u(x)}\leq C_1\abs{x}^{-t}\quad \forall\, \abs{x}\geq R_0,
\]
where \(t=\frac{N_A}{p}-1+\lambda\). We notice that there exists \(\sigma>0\) such that
\[
t+2+\sigma+(p-2)(t+\sigma+1)<t(q-1),
\]
and we observe that for any \(1<s<\frac{N_A-p}{p-1}\) we have
\begin{equation*}
-\di\pt{x^A\abs{\nabla\pt{\abs{x}^{-s}}}^{p-2}\nabla\pt{\abs{x}^{-s}}}=C_2x^A\abs{x}^{-s-2-(p-2)(s+1)},
\end{equation*}
where \(C_2=s\abs{s}^{p-2}(N_A-(p-2)(s+1)-2-s)>0\). Hence for \(\abs{x}\geq R_0\) and \(u_+=\max\set{u,0}\) we have
\begin{align*}
-\di\pt{x^A\abs{\nabla u_+}^{p-2}\nabla u_+}&\leq x^A u_+^{q-1}\\
&\leq C_1x^A\abs{x}^{-t(q-1)}\\
&\leq C_1x^A\abs{x}^{-(t+\sigma)-2-(p-2)((t+\sigma)+1)}\\
&= -\frac{C_1}{C_2}\di\pt{x^A\abs{\nabla\pt{\abs{x}^{-(t+\sigma)}}}^{p-2}\nabla\pt{\abs{x}^{-(t+\sigma)}}}.
\end{align*}
Because \(u_+=0\) over \(\partial(\RR^N_A)\) the comparison principle tells us that
\[
u_+(x)\leq C_3\abs{x}^{-t-\sigma}\quad\forall\, \abs{x}\geq R_0,
\]
for some sufficiently large constant \(C_3>0\). We can iterate this process for \(\tilde t=t+\sigma\) provided we can find \(\tilde \sigma>0\) such that \(\tilde t+2+\tilde \sigma+(p-2)(\tilde t+\tilde\sigma+1)<\frac{N_A-p}{p-1}\), hence we deduce that 
\[
u_+(x)\leq C\abs{x}^{-\frac{N_A-p}{p-1}+\ve}\quad\forall\, \abs{x}\geq R_0.
\]
for every \(\ve>0\). The above can be repeated for \((-u)_+\) and the result is proved.
\end{proof}

If we use \cref{comparison-mono} instead of \cref{comparison-w} we obtain the following variant of \cref{decay-mono}. If we observe that 
\begin{align*}
\partial(\RR^N_A)&=\bigcup_{i=1}^N\set{x\in\RR^N: x_i=0}\\
&=\bigcup_{i\in I_1}\set{x\in\RR^N: x_i=0}\cup \bigcup_{i\in I_2}\set{x\in\RR^N: x_i=0}\\
&=:\Gamma_1 \cup \Gamma_2,
\end{align*}
where \(I_1=\set{i: 0\leq a_i<1}\) and \(I_2=\set{i: a_i\geq 1}\) then we have

\begin{theorem}\label{decay-mono2}
Suppose \(A=(a_1,a_2,\ldots,a_N)\in\RR^N\), \(N_A>p\) with \(a_i\geq 0\) for all \(i\). If \(u\in H^{1,p,A}(\RR^N_A)\) is a weak solution of
\[
\left\{
\begin{aligned}
-\di(x^{A}\abs{\nabla u}^{p-2}\nabla u)&=x^A\abs{u}^{q-2}u&&\text{in }\RR^N_A,\\
u&=0&&\text{on }\Gamma_1,
\end{aligned}
\right.
\]
where \(q=\frac{N_Ap}{N_A-p}\). Then
\[
\abs{u(x)}\leq \frac{C}{1+\abs{x}^{\frac{N_A-p}{p-1}-\ve}}
\]
for every \(\ve>0\).
\end{theorem}

\begin{proof}
The proof goes exactly as the previous proof, the only difference being that \cref{comparison-mono} tells us that we do not need \(u_+(x)\leq C_3\abs{x}^{-t-\sigma}\) over the set \(\Gamma_2\) to obtain said inequality all over the set \(\set{x\in \RR^N_A: \abs{x}\geq R_0}\). We omit the details.
\end{proof}

\subsection{Power type weights: \(w=\abs{x}^a\)}
In the case of the power type weights \(w(x)=\abs{x}^a\) we can also improve the decay estimate from \cref{decay-thm}. First of all we recall that the result of Caffarelli-Kohn-Nirenberg \cite{CKN1984} which tells us that for every \(a\in \RR\) there exists a constant \(S_{p,a}>0\) such that
\begin{equation}\label{sob-ineq-power}
S_{p,a}\pt{\int_{\RR^N_a}\abs{u}^q\abs{x}^a\dx}^{\frac1q}\leq \pt{\int_{\RR^N_a}\abs{\nabla u}^p\abs{x}^a\dx}^{\frac1p},
\end{equation}
where \(q=\frac{N_ap}{N_a-p}\), \(N_a=N+a\) and \(\RR^N_0\) is the punctured plane \(\RR^N_0=\RR^N\setminus\set{0}\). Moreover, the weight \(w_a(x)=\abs{x}^a\) satisfies \(w_a(B_R(x_0))=R^{N+a}w_a(B_1(\frac{x_0}{R}))\) so that
\[
\frac{w_a(B_{2R}(x_0))}{w_a(B_R(x_0))}=2^{N+a}\frac{w_a(B_1(\frac{x_0}{2R}))}{w_a(B_1(\frac{x_0}{R}))}\leq 2^{N+a}
\]
because we have
\begin{lemma}\label{doub.const-pow2}
For any \(x_0\in \RR^N\) and \(a>0\) we have
\[
w_a(B_1(x_0))\leq w_a(B_1(2x_0)),
\]
with equality if \(x_0=0\).
\end{lemma}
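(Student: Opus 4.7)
The plan is to show the stronger statement that the map $z\mapsto u(z):=w_a(B_1(z))$ is radial and non-decreasing in $\abs{z}$. Granted this, the inequality follows at once from $\abs{2x_0}\geq\abs{x_0}$, and the case $x_0=0$ trivially yields equality.

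First, I would observe that $u$ is rotationally invariant (because both $\abs{x}^a$ and Lebesgue measure are), so there is a continuous function $v\colon[0,\infty)\to\RR$ with $u(z)=v(\abs{z})$. The one-dimensional case $N=1$ is already contained in \cref{doub.const-pow} applied to the single exponent $a$, so I restrict to $N\geq 2$. Rewriting $u$ as a convolution,
\[
u(z)=\int_{B_1(z)}\abs{x}^a\dx=\int_{B_1}\abs{z-y}^a\D y=\pt{\abs{\cdot}^a*\mathbf{1}_{B_1}}(z),
\]
and using the distributional identity $\Delta\abs{z}^a=a(a+N-2)\abs{z}^{a-2}$ (valid whenever $a>2-N$, hence for every $a>0$ when $N\geq 2$, since then $\abs{z}^{a-2}$ is locally integrable), I would deduce
\[
\Delta u=a(a+N-2)\pt{\abs{\cdot}^{a-2}*\mathbf{1}_{B_1}}\geq 0
\]
as a distribution on $\RR^N$. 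Written in polar coordinates via $u(z)=v(\abs{z})$, this amounts to $(r^{N-1}v'(r))'\geq 0$ in the sense of distributions on $(0,\infty)$, so the function $r\mapsto r^{N-1}v'(r)$ is non-decreasing.

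To upgrade this to $v'\geq 0$ I would verify that $r^{N-1}v'(r)\to 0$ as $r\to 0^+$. Because $\abs{z-y}^{a-1}\in L^1(B_1,\D y)$ whenever $a>1-N$, dominated convergence shows that $u\in C^1$ in a neighbourhood of the origin and
\[
v'(0)=\partial_r u(re_N)\big|_{r=0}=-a\int_{B_1}\abs{y}^{a-2}y_N\D y=0
\]
by the odd symmetry in $y_N$. Combined with the monotonicity of $r^{N-1}v'(r)$, this forces $r^{N-1}v'(r)\geq 0$ for all $r>0$, so $v$ is non-decreasing on $[0,\infty)$, and the lemma follows.

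The delicate step, and the one I expect to require the most care, is the regime $0<a<1$: there $\abs{z-y}^{a-2}$ is singular at $y=z$ and one cannot naively differentiate the integrand pointwise. The hypothesis $N\geq 2$ is exactly what keeps both $\abs{\cdot}^{a-1}$ and $\abs{\cdot}^{a-2}$ locally integrable, so the distributional computation of $\Delta u$ and the dominated-convergence justification of $v'(0)=0$ both go through; the $N=1$ case, which is exactly where this argument would break, is handled separately by invoking \cref{doub.const-pow}.
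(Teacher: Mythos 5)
Your proof takes a genuinely different route from the paper's, and it is worth comparing the two. The paper argues by an elementary set decomposition that is tailored to the specific pair $(x_0,2x_0)$: when $\abs{x_0}\geq 2$ the balls $B_1(x_0)$ and $B_1(2x_0)$ are disjoint and one compares $\abs{x}^a$ pointwise; when $\abs{x_0}<2$ the balls overlap, and one compares the two symmetric differences $B_1(x_0)\setminus B_1(2x_0)$ and $B_1(2x_0)\setminus B_1(x_0)$ by observing that the threshold $\abs{x}^2=1+2\abs{x_0}^2$ separates them and that their Lebesgue measures agree by reflection. Your subharmonicity argument is more conceptual and delivers more: it establishes that $z\mapsto w_a(B_1(z))$ is radial and non-decreasing in $\abs{z}$, so the doubling inequality, and indeed the comparison for any dilation factor $\geq 1$, drops out uniformly. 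The price is that the argument requires $N\geq 2$ so that $\abs{z}^{a-2}$ is locally integrable and the coefficient $a(a+N-2)$ is non-negative. Within that range the steps are sound: $u\in C^1(\RR^N)$ since $\abs{z-y}^{a-1}\in L^1_{loc}$, the distributional identity $\Delta\abs{z}^a=a(a+N-2)\abs{z}^{a-2}$ holds (the boundary terms on $\set{\abs{z}=\ve}$ vanish precisely because $a+N-2>0$), subharmonicity of the convolution follows, testing $\Delta u\geq 0$ against radial $\varphi$ gives that $r^{N-1}v'(r)$ is non-decreasing, and the limit $r^{N-1}v'(r)\to 0$ as $r\to 0^+$ (from continuity of $v'$ together with $N\geq 2$) forces $v'\geq 0$.

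There is, however, a genuine gap in your handling of $N=1$. You dispose of it by invoking \cref{doub.const-pow}, but in this paper \cref{doub.const-pow} has no independent proof: the proof of \cref{doub.const-pow2} opens by declaring that \cref{doub.const-pow} is just the case $N=1$ of \cref{doub.const-pow2}, so only the latter is proved. Your citation is therefore circular. You correctly diagnose that the mechanism breaks at $N=1$: for $0<a<1$ the function $\abs{z}^{a-2}$ is not locally integrable on $\RR$ and the formal coefficient $a(a-1)$ is negative, and for $a=1$ the distributional second derivative is $2\delta_0$, so subharmonicity cannot be read off the same way. A separate argument is genuinely required, but it is one line and you should supply it: for $N=1$ and $z>0$,
\[
\frac{\D}{\D z}\,w_a(B_1(z))=\frac{\D}{\D z}\int_{z-1}^{z+1}\abs{x}^a\dx=\pt{z+1}^a-\abs{z-1}^a>0,
\]
since $z+1>\abs{z-1}$ for every $z>0$ and $a>0$; hence $w_a(B_1(\cdot))$ is even and increasing on $(0,\infty)$, giving the same monotonicity in $\abs{z}$ as for $N\geq 2$. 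With this replacement for the circular reference, the proposal is complete and correct.
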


We prove this lemma below in \cref{sec:doubling}, and because the operator \(-\di(\abs{x}^a\abs{\nabla u}^{p-2}\nabla u)\) also satisfies the comparison principle \cref{comparison-mono} we have
\begin{theorem}
Suppose \(a>0\) and \(N_a>p\). If \(u\in H^{1,p,a}(\RR^N)\) is a weak solution of
\[
-\di(\abs{x}^{a}\abs{\nabla u}^{p-2}\nabla u)=\abs{x}^a\abs{u}^{q-2}u,
\]
where \(q=\frac{N_ap}{N_a-p}\). Then
\[
\abs{u(x)}\leq \frac{C}{1+\abs{x}^{\frac{N_a-p}{p-1}-\ve}}
\]
for every \(\ve>0\).
\end{theorem}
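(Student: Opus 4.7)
The plan mirrors the strategy of \cref{decay-mono,decay-mono2}: combine the preliminary polynomial decay coming from \cref{decay-thm} with a barrier argument based on the radial functions $v_s(x)=\abs{x}^{-s}$ and a comparison principle for the operator $L_w u = -\di(w\abs{\nabla u}^{p-2}\nabla u)$ with $w(x)=\abs{x}^a$. The setup from \cref{sec-infty} applies because $w_a(x)=\abs{x}^a$ (with $a>0$) satisfies \eqref{doubling-w} and \eqref{inv-loc-int2}, the weight has dimension $D_{w_a}=N+a=N_a$ by the doubling computation preceding \cref{doub.const-pow2}, the local Poincaré inequality \ref{weighted-poincare} is known to hold (it is a Muckenhoupt $A_p$ weight for the relevant parameters), and the global Sobolev inequality \eqref{sob-ineq-w} is exactly the Caffarelli--Kohn--Nirenberg inequality \eqref{sob-ineq-power} with exponent $q=\frac{N_a p}{N_a-p}$.

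The first step is to invoke \cref{decay-thm} to obtain $R_0\geq 1$, $\lambda>0$ and $C_1>0$ so that $\abs{u(x)}\leq C_1\abs{x}^{-t}$ with $t=\frac{N_a}{p}-1+\lambda$ for all $\abs{x}\geq R_0$. Next, I will compute the action of $L_{w_a}$ on the radial barrier $v_s(x)=\abs{x}^{-s}$: since $\nabla v_s=-s\abs{x}^{-s-2}x$ and $\di(\abs{x}^\beta x)=(\beta+N)\abs{x}^\beta$, a direct calculation yields
\[
L_{w_a}\bigl(\abs{x}^{-s}\bigr)=C_2\,\abs{x}^{a}\abs{x}^{-s-2-(p-2)(s+1)},\qquad C_2=s\abs{s}^{p-2}\bigl(N_a-(p-1)s-p\bigr),
\]
so that $C_2>0$ whenever $1<s<\frac{N_a-p}{p-1}$. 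Then, given the current decay exponent $t$, I will pick $\sigma>0$ small so that $t+2+\sigma+(p-2)(t+\sigma+1)<t(q-1)$ (possible precisely because $t>\frac{N_a}{p}-1$ and $q-1=\frac{N_a+p}{N_a-p}$), and note that on $\{\abs{x}\geq R_0\}$ the pointwise inequality
\[
L_{w_a}u_+\leq w_a u_+^{q-1}\leq C_1^{q-1}w_a\abs{x}^{-t(q-1)}\leq \tfrac{C_1^{q-1}}{C_2}\,L_{w_a}\bigl(M\abs{x}^{-(t+\sigma)}\bigr)
\]
holds for a suitably chosen $M>0$. Choosing $M$ large enough so that $M R_0^{-(t+\sigma)}\geq \norm{u_+}_{L^\infty(\partial B_{R_0})}$ (finite by \cref{ext-bd-local}, which applies to this weight), and using that both $u_+$ and $M\abs{x}^{-(t+\sigma)}$ vanish at infinity, the comparison principle produces $u_+(x)\leq M\abs{x}^{-(t+\sigma)}$ for $\abs{x}\geq R_0$. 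Iterating this procedure with $\tilde t=t+\sigma$ as long as $\tilde t+2+\tilde\sigma+(p-2)(\tilde t+\tilde\sigma+1)<\frac{N_a-p}{p-1}$ is possible, we push the exponent up to $\frac{N_a-p}{p-1}-\varepsilon$ for arbitrary $\varepsilon>0$. The same argument applied to $(-u)_+$ furnishes the lower bound, yielding the pointwise estimate claimed for $\abs{x}\geq R_0$; for $\abs{x}<R_0$ the conclusion follows from \cref{ext-bd-local}.

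The main obstacle is verifying the comparison principle on the exterior region $\set{\abs{x}\geq R_0}$ for the weighted $p$-Laplacian $L_{w_a}$. The paper advertises such a result in \cref{sec:comparison} (specifically, the referenced \cref{comparison-mono} handles the monomial case on half-spaces with boundary data prescribed only on the piece of $\partial\Omega$ where the weight is non-degenerate). For the present situation the required statement is a bit more subtle: the boundary of the exterior domain consists of the sphere $\set{\abs{x}=R_0}$, where the usual inequality $u_+\leq M\abs{x}^{-(t+\sigma)}$ has to be enforced by choice of $M$, and the ``boundary at infinity'', where both functions decay to zero. Once this is in place (by a standard truncation of the test function $\varphi=(u_+-M\abs{x}^{-(t+\sigma)})_+\chi_{\set{\abs{x}\leq R}}$ followed by $R\to\infty$, exploiting that $u\in D^{1,p,w_a}$), the iteration is mechanical and the proof concludes exactly as in \cref{decay-mono}. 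I omit the details as they are direct transcriptions from the monomial case.
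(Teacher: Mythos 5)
Your proposal is correct and follows essentially the same strategy as the paper's proof: invoke \cref{decay-thm} for the initial decay $\abs{u(x)}\leq C_1\abs{x}^{-t}$ with $t=\frac{N_a}{p}-1+\lambda$, compute $L_{w_a}(\abs{x}^{-s})=C_2\abs{x}^{a-s-2-(p-2)(s+1)}$, set up the supersolution comparison with exponent $t+\sigma$, iterate until the exponent reaches $\frac{N_a-p}{p-1}-\ve$, and repeat for $(-u)_+$. Your formula $C_2 = s\abs{s}^{p-2}(N_a-(p-1)s-p)$ agrees with the paper's after expansion. One small improvement in your write-up is that you flag and sketch a fix for a point the paper glosses over: the paper cites \cref{comparison-mono} (stated for monomial weights $x^A$ on a fixed domain, with a proof that relies on the weighted Sobolev–Hardy inequality) but actually needs a comparison principle for the power weight $\abs{x}^a$ on the exterior region $\set{\abs{x}>R_0}$, where the ``boundary at infinity'' must be handled. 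Your suggested truncation argument using $\varphi_R=(u_+-M\abs{x}^{-(t+\sigma)})_+\chi_R^p$ and letting $R\to\infty$ works because $t+\sigma>\frac{N_a-p}{p}$ guarantees $M\abs{x}^{-(t+\sigma)}\in D^{1,p,w_a}$ on the exterior region, so the cross terms vanish; this is a legitimate and slightly more careful treatment than what the paper records.
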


\begin{proof}
This is analogous to the proof of \cref{decay-mono}. We again deduce the existence of \(\lambda>0\) and constants \(R_0\geq 1\), \(C_1>0\) such that
\[
\abs{u(x)}\leq C_1\abs{x}^{-t}\quad \forall, \abs{x}\geq R_0,
\]
where \(t=\frac{N_a}{p}-1+\lambda\). Now for any \(1<s<\frac{N_a-p}{p-1}\) we have
\begin{align*}
-\di\pt{\abs{x}^a\nabla\pt{\abs{x}^{-s}}}&=s\abs{s}^{p-2}(N_a-(p-2)(s+1)-2-s)\abs{x}^{a-(p-2)(s+1)-s-2}\\
&=C_2\abs{x}^{a-(p-2)(s+1)-s-2},
\end{align*}
where \(C_2>0\). Hence for \(\abs{x}\geq R_0\) and \(u_+=\max\set{u,0}\) we have
\[
-\di\pt{\abs{x}^a\abs{\nabla u_+}^{p-2}\nabla u_+}\leq -\frac{C_1}{C_2}\di\pt{\abs{x}^a\abs{\nabla\pt{\abs{x}^{-(t+\sigma)}}}^{p-2}\nabla\pt{\abs{x}^{-(t+\sigma)}}},
\]
where \(\sigma>0\) verifies 
\[
t+2+\sigma+(p-2)(t+\sigma+1)<t(q-1).
\]
We use the comparison principle \cref{comparison-mono} and the same iterative argument from the proof \cref{decay-mono} to obtain a constant \(C>0\) for which
\[
u_+(x)\leq C\abs{x}^{-\frac{N_a-p}{p-1}+\ve}\quad\forall\, \abs{x}\geq R_0.
\]
holds for every \(\ve>0\). Finally, because the same estimate can be obtained for \((-u)_+\) the proof is completed.
\end{proof}

\appendix

\section{Doubling constant of power type weights}\label{sec:doubling}

Observe that \cref{doub.const-pow} is just \cref{doub.const-pow2} for the case \(N=1\), so we only need to prove the latter.
\begin{proof}[Proof of \cref{doub.const-pow2}]
We separate the proof into two cases: \(\abs{x_0}\geq2\) and \(0\leq \abs{x_0}<2\). Observe that
\begin{gather*}
x\in B_1(x_0)\Rightarrow \abs{x}< 1+\abs{x_0},\\
x\in B_1(2x_0)\Rightarrow \abs{x}> 2\abs{x_0}-1,
\end{gather*}
so \(1+\abs{x_0}\leq 2\abs{x_0}-1\Leftrightarrow \abs{x_0}\geq 2\) then \(B_1(x_0)\) and \(B_1(2x_0)\) are disjoint and we can write
\begin{align*}
w_a(B_1(x_0))&=\int_{B_1(x_0)}\abs{x}^a\dx\\
&\leq (1+\abs{x_0})^a\lambda(B_1(x_0))\\
&\leq (2\abs{x_0}-1)^a\lambda(B_1(2x_0))\\
&\leq \int_{B_1(2x_0)}\abs{x}^a\dx\\
&=w_a(B_1(2x_0)),
\end{align*}
due to the invariance of the Lebesgue measure with respect to translations. If \(0\leq \abs{x_0}<2\) then \(E=B_1(x_0)\cap B_1(2x_0)\neq \varnothing\) and we have
\begin{align*}
w_a(B_1(x_0))&=w_a(B_1(x_0)\setminus B_1(2x_0))+w_a(E)\\
&=w_a(B_1(2x_0))+w_a(B_1(x_0)\setminus B_1(2x_0))-w_a(B_1(2x_0)\setminus B_1(x_0)),
\end{align*}
and if \(x\in B_1(x_0)\setminus B_1(2x_0)\) then \(\abs{x-x_0}<1\) with \(\abs{x-2x_0}\geq 1\) therefore
\[
\abs{x}^2<1+2\abs{x_0}^2.
\]
Similarly, if \(x\in B_1(2x_0)\setminus B_1(x_0)\) then \(\abs{x-2x_0}<1\) with \(\abs{x-x_0}\geq 1\) so that
\[
\abs{x}^2>1+2\abs{x_0}^2,
\]
and by the invariance of the Lebesgue measure under reflections we see that
\begin{align*}
w_a(B_1(x_0)\setminus B_1(2x_0))&\leq (1+2\abs{x_0}^2)^{\frac{a}2}\lambda(B_1(x_0)\setminus B_1(2x_0))\\
&=(1+2\abs{x_0}^2)^{\frac{a}2}\lambda(B_1(2x_0)\setminus B_1(x_0))\\
&\leq w_a(B_1(2x_0)\setminus B_1(x_0)),
\end{align*}
so that
\[
w_a(B_1(x_0))\leq w_a(B_1(2x_0)),
\]
and the proof is completed.
\end{proof}

\section{Comparison principles}\label{sec:comparison}

A simple comparison principle can be easily obtained for the operator \(L_w(u)=-\di(w\abs{\nabla u}^{p-2}\nabla u)\) as it can be seen in the following
\begin{proposition}\label{comparison-w}
For a connected open set \(\Omega\) suppose \(u,v\in D^{1,p,w}(\Omega)\) satisfy \(u\leq v\) over \(\domega\) and 
\[
-\di\pt{w\abs{\nabla u}^{p-2}\nabla u}\leq -\di\pt{w\abs{\nabla v}^{p-2}\nabla v},
\]
weakly, that is
\[
\int_{\Omega}\pt{\abs{\nabla u}^{p-2}\nabla u-\abs{\nabla v}^{p-2}\nabla v}\nabla \varphi w\dx\leq 0\quad\forall\, \varphi\in H^{1,p,w}_0(\Omega),\ \varphi\geq 0,
\]
then \(u\leq v\) in \(\Omega\).
\end{proposition}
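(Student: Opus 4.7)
The plan is to test the weak differential inequality against $\varphi = (u-v)_+$ and use the strict monotonicity of the map $\xi \mapsto |\xi|^{p-2}\xi$.

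First, I would check that $\varphi = (u-v)_+$ is an admissible test function in the weak formulation, i.e.\ that $\varphi \in H^{1,p,w}_0(\Omega)$ and $\varphi \geq 0$. Since $u,v \in D^{1,p,w}(\Omega)$ and the $p$-admissibility of $w$ ensures that truncations and the operations $f \mapsto f_+$ are continuous on $H^{1,p,w}(\Omega)$ (see \cite[Chapter~1]{HeKiMa2006}), this gives $(u-v)_+ \in H^{1,p,w}(\Omega)$ with
\[
\nabla (u-v)_+ = \bigl(\nabla u - \nabla v\bigr)\chi_{\{u>v\}} \qquad \text{a.e.}
\]
The boundary condition $u \leq v$ on $\partial\Omega$ shows $(u-v)_+$ vanishes on $\partial\Omega$ in the trace sense, placing it in $H^{1,p,w}_0(\Omega)$.

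Second, I substitute $\varphi = (u-v)_+$ into the weak inequality to obtain
\[
\int_{\{u>v\}} \bigl(\abs{\nabla u}^{p-2}\nabla u - \abs{\nabla v}^{p-2}\nabla v\bigr)\cdot (\nabla u - \nabla v)\, w\dx \leq 0.
\]
Here one invokes the standard monotonicity inequality: for all $\xi,\eta \in \RR^N$ and $p>1$,
\[
\bigl(\abs{\xi}^{p-2}\xi - \abs{\eta}^{p-2}\eta\bigr)\cdot(\xi - \eta) \geq 0,
\]
with equality if and only if $\xi = \eta$ (one uses $c_p |\xi-\eta|^p$ as a lower bound when $p\geq 2$, and $c_p |\xi-\eta|^2/(|\xi|+|\eta|)^{2-p}$ when $1<p<2$). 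Since $w \geq 0$, the integrand is nonnegative, so it must vanish $w\dx$-a.e.\ on $\{u > v\}$. Using \eqref{inv-loc-int2}, this forces $\nabla u = \nabla v$ Lebesgue-a.e.\ on $\{u>v\}$, that is, $\nabla (u-v)_+ = 0$ a.e.\ in $\Omega$.

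Finally, to conclude $(u-v)_+ \equiv 0$, I combine the vanishing of the gradient with the boundary condition. Since $(u-v)_+ \in H^{1,p,w}_0(\Omega)$ and $\nabla (u-v)_+ = 0$, the local weighted Poincaré inequality \ref{weighted-poincare} applied on a covering by balls of a sub-domain, together with the zero boundary trace and the connectedness of $\Omega$, yields $(u-v)_+ = 0$ a.e., hence $u \leq v$ a.e.\ in $\Omega$. The main obstacle is handling the two regimes $p\geq 2$ and $1<p<2$ uniformly in the monotonicity step; the case $1<p<2$ is slightly delicate because the strict convexity estimate degenerates where $|\nabla u|+|\nabla v|=0$, but the vanishing of the (nonnegative) integrand still implies $\nabla u = \nabla v$ pointwise a.e.\ on $\{u>v\}$, which is what is needed.
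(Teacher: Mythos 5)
Your proof is correct and follows essentially the same route as the paper: test the weak inequality with $\varphi=(u-v)_+$, use the strict monotonicity of $\xi\mapsto\abs{\xi}^{p-2}\xi$ to force $\nabla u=\nabla v$ a.e.\ on $\set{u>v}$, and conclude from the vanishing gradient together with the boundary condition. The only cosmetic difference is in the last step, where the paper argues that $u=v+C$ on $\set{u>v}$ and derives a contradiction from $u=v$ on $\partial\set{u>v}$, while you observe that $(u-v)_+$ is a constant in $H^{1,p,w}_0(\Omega)$ and hence zero; both are equivalent.
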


\begin{proof}
The function \(\varphi=(u-v)_+\) is a valid test function hence we obtain
\[
\int_{\Omega^+} \pt{\abs{\nabla u}^{p-2}\nabla u-\abs{\nabla v}^{p-2}\nabla v}\cdot\pt{\nabla u-\nabla v}w\leq 0,
\]
where \(\Omega^+=\set{x\in\Omega : u(x)>v(x)}\). However the integrand is non-negative due to the convexity of the function \(s\mapsto \abs{s}^p\), so there are two possibilities, either \(\Omega^+=\varnothing\) in which case we obtain that \(u\leq v\) and the result is proven, or \(\nabla u=\nabla v\) in which case \(u=v+C\) on \(\Omega^+\) for some constant \(C\). But since \(u=v\) on \(\partial\Omega^+\) we conclude that \(u=v\) on \(\Omega^+\) which is impossible.
\end{proof}

This proposition is enough for the proof of \cref{decay-mono} but it is not enough for \cref{decay-mono2}. A standard comparison principle for an operator \(L\) could be: If \(Lu\leq Lv\) in \(\Omega\) and \(u\leq v\) over \(\domega\) then \(u\leq v\) in \(\Omega\), however if the operator \(Lu\) is \(-\di\pt{x^A\abs{\nabla u}^{p-2}\nabla u}\) then it is only needed to impose the condition \(u\leq v\) over a \emph{portion of the boundary}. Recall that the monomial weight is defined as
\[
x^A=\prod_{i=1}^N\abs{x_i}^{a_i},
\]
where \(a_i\geq 0\) for all \(i\in\set{1,2,\ldots,N}\) and that the study made in \cite{Cas2016-2,Cas2021} pointed out that the cases where \(i\) is such that \(a_i\geq 1\) might require a different treatment, therefore we consider the following partition of the set \(\set{1,2,\ldots,N}\):
\[
I_1=\set{i\in \set{1,2,\ldots,N} : 0\leq a_i<1},\quad I_2=\set{i\in \set{1,2,\ldots,N} : a_i\geq 1},
\]
and define
\[
\Gamma_1=\set{x\in \RR^N: x_i=0\text{ for some }i\in I_1},\quad \Gamma_2=\set{x\in \RR^N: x_i=0\text{ for some }i\in I_2}.
\]
The following result says that in order to have a comparison principle it is enough to impose the inequality \(u\leq v\) over the portion of \(\domega\) not intersecting \(\Gamma_2\).

\begin{proposition}\label{comparison-mono}
For a connected open set \(\Omega\) suppose \(u,v\in D^{1,p,A}(\Omega)\) satisfy \(u\leq v\) over \(\domega\setminus \Gamma_2\) and 
\[
-\di\pt{x^A\abs{\nabla u}^{p-2}\nabla u}\leq -\di\pt{x^{A}\abs{\nabla v}^{p-2}\nabla v},
\]
weakly then \(u\leq v\) in \(\Omega\).
\end{proposition}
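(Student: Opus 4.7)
The strategy mirrors \cref{comparison-w}: use \(\varphi=(u-v)_+\) as a non-negative test function and exploit the strict monotonicity of the map \(z\mapsto\abs{z}^{p-2}z\). The novelty is that \(u\leq v\) is only imposed on \(\domega\setminus\Gamma_2\), so the main obstacle is to justify that \((u-v)_+\) is an admissible test function in spite of having no a priori boundary control on \(\Gamma_2\).

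\textbf{Step 1 (Admissibility of the test function).} I would first show that \((u-v)_+\) lies in an appropriate subspace of \(D^{1,p,A}(\Omega)\) consisting of functions that vanish on \(\domega\setminus\Gamma_2\) but are not constrained on \(\Gamma_2\). The structural input is that for \(i\in I_2\) (i.e.\ \(a_i\geq 1\)) the monomial weight \(x^A\) admits no meaningful trace on the hyperplane \(\set{x_i=0}\); equivalently, there exist cutoffs \(\eta_\varepsilon\in C^\infty(\cl{\Omega})\) with \(\eta_\varepsilon\equiv 0\) in a neighborhood of \(\Gamma_2\), \(\eta_\varepsilon\to 1\) pointwise on \(\Omega\setminus\Gamma_2\), and
\[
\int_\Omega x^A\abs{\nabla\eta_\varepsilon}^p\dx\sublim\limits_{\varepsilon\to 0} 0.
\]
This construction uses \(a_i\geq 1\) in an essential way and can be assembled from one-dimensional cutoffs of the form \(\eta_\varepsilon(x_i)=\min\set{1,\varepsilon^{-1}\log(\abs{x_i}/\delta_\varepsilon)}_+\) along each \(i\in I_2\), exactly as in the density arguments of \cite{Cas2016-2,Cas2021}. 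Then \(\varphi_\varepsilon:=\eta_\varepsilon(u-v)_+\) vanishes on all of \(\domega\) and can be approximated in \(D^{1,p,A}(\Omega)\) by \(C^\infty_c(\Omega)\)-functions.

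\textbf{Step 2 (Monotonicity inequality and passage to the limit).} Using \(\varphi_\varepsilon\) in the weak inequality gives
\[
\int_\Omega\pt{\abs{\nabla u}^{p-2}\nabla u-\abs{\nabla v}^{p-2}\nabla v}\cdot\nabla\varphi_\varepsilon\,x^A\dx\leq 0.
\]
Writing \(\nabla\varphi_\varepsilon=\eta_\varepsilon\chi_{\Omega^+}\nabla(u-v)+(u-v)_+\nabla\eta_\varepsilon\) and controlling the second term by H\"older's inequality together with the energy-vanishing property of \(\eta_\varepsilon\), I would send \(\varepsilon\to 0^+\) to obtain
\[
\int_{\Omega^+}\pt{\abs{\nabla u}^{p-2}\nabla u-\abs{\nabla v}^{p-2}\nabla v}\cdot\nabla(u-v)\,x^A\dx\leq 0,
\]
where \(\Omega^+=\set{x\in\Omega: u(x)>v(x)}\).

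\textbf{Step 3 (Conclusion).} The integrand is pointwise non-negative by convexity of \(z\mapsto\abs{z}^p\), so it vanishes a.e.\ on \(\Omega^+\), forcing \(\nabla u=\nabla v\) a.e.\ on \(\Omega^+\). Thus \(u-v\) is constant on each connected component of \(\Omega^+\); since \(u=v\) on \(\partial\Omega^+\cap\Omega\) by definition and \(u\leq v\) on \(\partial\Omega^+\cap(\domega\setminus\Gamma_2)\) by hypothesis, each such component must have that constant equal to zero. Hence \(\Omega^+=\varnothing\), i.e.\ \(u\leq v\) in \(\Omega\).

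The main obstacle is Step~1: producing the cutoffs \(\eta_\varepsilon\) with vanishing weighted Dirichlet energy. This is precisely the technical point that distinguishes \(I_2\) from \(I_1\) and explains why no boundary data is needed on \(\Gamma_2\); once it is in place, Steps~2 and~3 are routine adaptations of the proof of \cref{comparison-w}.
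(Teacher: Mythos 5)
Your Steps 2 and 3 are exactly the paper's conclusion, and your global architecture — cutoff away from $\Gamma_2$, pass to the limit, monotonicity — is the right one. But Step 1, which you correctly identify as the crux, is where your mechanism would fail and where it diverges from the paper's.

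You propose to build cutoffs $\eta_\ve$ with vanishing weighted Dirichlet energy, $\int_\Omega x^A\abs{\nabla\eta_\ve}^p\dx\to 0$, and then to control the error term by making that factor small under H\"older. Two problems. First, this vanishing-energy property is \emph{not} available under the hypothesis $a_i\geq 1$ for $i\in I_2$: a one-dimensional computation shows that cutoffs concentrated on $\{\delta_\ve<\abs{x_i}<\delta_\ve e^\ve\}$ (logarithmic) or $\{\ve<\abs{x_i}<2\ve\}$ (linear) have $\int\abs{\eta_\ve'}^p\abs{x_i}^{a_i}\dx_i$ tending to zero only when $a_i\geq p-1$ (logarithmic case, with equality allowed) or $a_i>p-1$ (linear case). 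For $p>2$ and $1\leq a_i<p-1$, the hyperplane $\{x_i=0\}$ has positive weighted $p$-capacity and no such sequence exists, so your Step 1 collapses. Second, even when the energy does vanish, your H\"older split would need $\norm{(u-v)_+\nabla\eta_\ve}_{L^{p,x^A}}\to 0$, which does not follow from $\norm{\nabla\eta_\ve}_{L^{p,x^A}}\to 0$ unless $(u-v)_+$ were bounded — which is not assumed.

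The paper sidesteps both issues. It uses the \emph{simple linear} cutoff $\eta_\ve(x)=\prod_{i\in I_2}\rho_\ve(x_i)$ with $\abs{\rho_\ve'}\leq C\ve^{-1}$, and pairs the error term as
\[
\abs{\int u\abs{\nabla u}^{p-2}\nabla u\cdot\nabla\eta_\ve\,x^A}\leq\pt{\int\abs{u\nabla\eta_\ve}^px^A}^{\frac1p}\pt{\int_{\supp\nabla\eta_\ve}\abs{\nabla u}^px^A}^{1-\frac1p}
\]
(and similarly for the other three cross terms). The first factor is only shown to be \emph{bounded uniformly in $\ve$}, not small: on $\supp\nabla\eta_\ve$ one has $x_{i_0}\leq 2\ve$ for some $i_0\in I_2$, hence $\abs{\nabla\eta_\ve}\leq C/\ve\leq 2C/x_{i_0}$, and the weighted Hardy inequality $\int\abs{u/x_{i_0}}^px^A\leq C\norm{u}_{1,p,A}^p$ of \cite[Theorem 1]{Cas2016-2} gives the uniform bound. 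The smallness that drives the limit comes from the \emph{second} factor: $\supp\nabla\eta_\ve$ shrinks to $\Gamma_2$, a Lebesgue-null set, so $\int_{\supp\nabla\eta_\ve}\abs{\nabla u}^px^A\to 0$ by absolute continuity of the integral. In other words, the role of $a_i\geq 1$ is to make the Hardy inequality available, \emph{not} to make the cutoff energy vanish — and it is the Hardy inequality, not a capacity-zero argument, that explains why no boundary data is needed on $\Gamma_2$. You should replace your Step 1 with this Hardy-plus-absolute-continuity argument.
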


\begin{proof}
Consider \(\eta_\ve\in C^\infty(\RR^N)\) defined as \(\eta_\ve(x)=\prod_{i\in I_2}\rho_\ve(x_i)\) where \(\rho_\ve\in C^\infty(\RR)\) satisfies
\[
\rho(x)=0\text{ if }x\leq \ve,\quad \rho(x)=1\text{ if }x\geq 2\ve,\quad \abs{\rho'(x)}\leq C\ve^{-1}.
\]

Observe that \(\varphi=\eta_\ve (u-v)^+\) is a valid test function because \(u-v\leq 0\) over \(\domega\setminus \Gamma_2\) and \(\eta_\ve=0\) over \(\Gamma_2\). Hence one obtains
\begin{multline*}
\int_{\Omega^+} \pt{\abs{\nabla u}^{p-2}\nabla u-\abs{\nabla v}^{p-2}\nabla v}\cdot\pt{\nabla u-\nabla v}\eta_\ve x^A\dx\\
+\int_{\Omega^+} \pt{\abs{\nabla u}^{p-2}\nabla u-\abs{\nabla v}^{p-2}\nabla v}\cdot\nabla\eta_\ve\pt{u-v}x^A\dx\leq 0
\end{multline*}
where \(\Omega^+=\set{x\in\Omega : u(x)>v(x)}\). As a consequence we get
\begin{multline*}
\int_{\Omega^+} \pt{\abs{\nabla u}^{p-2}\nabla u-\abs{\nabla v}^{p-2}\nabla v}\cdot\pt{\nabla u-\nabla v}\eta_\ve x^A\dx\\
\leq \abs{\int_{\Omega^+} \pt{\abs{\nabla u}^{p-2}\nabla u-\abs{\nabla v}^{p-2}\nabla v}\cdot\nabla\eta_\ve\pt{u-v} x^A\dx}.
\end{multline*}
However
\begin{align*}
\abs{\int_{\Omega^+} u\abs{\nabla u}^{p-2}\nabla u\cdot\nabla\eta_\ve x^A\dx}&\leq \pt{\int_{\Omega^+} \abs{u\nabla\eta_\ve}^px^A\dx}^{\frac1p}\pt{\int_{\supp \nabla\eta_\ve} \abs{\nabla u}^{p}x^A\dx}^{1-\frac1p}\\
&\leq C\pt{\int_{\supp \nabla\eta_\ve} \abs{\frac{u}\ve}^px^A\dx}^{\frac1p}\pt{\int_{\supp \nabla\eta_\ve} \abs{\nabla u}^{p}x^A\dx}^{1-\frac1p}
\end{align*}
but if \(x\in \supp\nabla \eta_\ve\) then \(x_{i_0}\leq 2\ve\) for some \(i_0\in I_2\) and as a consequence
\[
\int_{\supp\nabla\eta_\ve}\abs{\frac{u}{\ve}}^px^A\dx\leq C\sum_{i\in I_2}\int_{\supp\nabla\eta_\ve}\abs{\frac{u}{x_i}}^px^A\dx\leq C\sum_{i\in I_2}\int_{\Omega}\abs{\frac{u}{x_i}}^px^A\dx\leq C\norm{u}_{1,p,A}^p,
\]
by the weighted Sobolev-Hardy inequality \cite[Theorem 1]{Cas2016-2}. Hence
\[
\abs{\int_{\Omega^+} u\abs{\nabla u}^{p-2}\nabla u\cdot\nabla\eta_\ve x^A\dx}\leq C\norm{u}_{1,p,A}\pt{\int_{\supp \nabla\eta_\ve} \abs{\nabla u}^{p}x^A\dx}^{1-\frac1p}.
\]
Similarly we have
\begin{gather*}
\abs{\int_{\Omega^+} v\abs{\nabla u}^{p-2}\nabla u\cdot\nabla\eta_\ve x^A\dx}\leq C\norm{v}_{1,p,A}\pt{\int_{\supp \nabla\eta_\ve} \abs{\nabla u}^{p}x^A\dx}^{1-\frac1p}\\
\abs{\int_{\Omega^+} u\abs{\nabla v}^{p-2}\nabla v\cdot\nabla\eta_\ve x^A\dx}\leq C\norm{u}_{1,p,A}\pt{\int_{\supp \nabla\eta_\ve} \abs{\nabla v}^{p}x^A\dx}^{1-\frac1p}\\
\abs{\int_{\Omega^+} v\abs{\nabla v}^{p-2}\nabla v\cdot\nabla\eta_\ve x^A\dx}\leq C\norm{v}_{1,p,A}\pt{\int_{\supp \nabla\eta_\ve} \abs{\nabla v}^{p}x^A\dx}^{1-\frac1p}.
\end{gather*}
In summary we have obtained
\begin{multline*}
\int_{\Omega^+} \pt{\abs{\nabla u}^{p-2}\nabla u-\abs{\nabla v}^{p-2}\nabla v}\cdot\pt{\nabla u-\nabla v}\eta_\ve x^A\dx\\\leq C\pt{\norm{u}_{1,p,A}+\norm{v}_{1,p,A}}\cdot\spt{\pt{\int_{\supp \nabla\eta_\ve}\abs{\nabla u}^px^A\dx}^{1-\frac1p}+\pt{\int_{\supp \nabla\eta_\ve}\abs{\nabla u}^px^A\dx}^{1-\frac1p}}
\end{multline*}
and as \(\ve\to 0^+\) we deduce that
\[
\int_{\Omega^+} \pt{\abs{\nabla u}^{p-2}\nabla u-\abs{\nabla v}^{p-2}\nabla v}\cdot\pt{\nabla u-\nabla v}x^A\dx\leq 0
\]
so we can proceed as in the proof of \cref{comparison-w} to conclude.
\end{proof}

\providecommand{\bysame}{\leavevmode\hbox to3em{\hrulefill}\thinspace}
\providecommand{\MR}{\relax\ifhmode\unskip\space\fi MR }
% \MRhref is called by the amsart/book/proc definition of \MR.
\providecommand{\MRhref}[2]{%
	\href{http://www.ams.org/mathscinet-getitem?mr=#1}{#2}
}
\providecommand{\href}[2]{#2}


\begin{thebibliography}{10}
	
	\bibitem{AlGoHa2020}
	Ryan Alvarado, Przemys\l aw G\'{o}rka, and Piotr Haj\l asz, \emph{{S}obolev
		embedding for {$M^{1, p}$} spaces is equivalent to a lower bound of the
		measure}, J. Funct. Anal. \textbf{279} (2020), no.~7, 108628, 39.
	\MR{4107806}
	
	\bibitem{Assouad1979}
	Patrice Assouad, \emph{\'{E}tude d'une dimension m\'{e}trique li\'{e}e \`a la
		possibilit\'{e} de plongements dans {${\bf R}\sp{n}$}}, C. R. Acad. Sci.
	Paris S\'{e}r. A-B \textbf{288} (1979), no.~15, A731--A734. \MR{532401}
	
	\bibitem{Bon2018}
	Salvatore Bonafede, \emph{H\"{o}lder continuity of bounded generalized
		solutions for some degenerated quasilinear elliptic equations with natural
		growth terms}, Comment. Math. Univ. Carolin. \textbf{59} (2018), no.~1,
	45--64. \MR{3783808}
	
	\bibitem{CR-O2013-2}
	Xavier Cabr{\'e} and Xavier Ros-Oton, \emph{Sobolev and isoperimetric
		inequalities with monomial weights}, J. Differential Equations \textbf{255}
	(2013), no.~11, 4312--4336. \MR{3097258}
	
	\bibitem{CKN1984}
	Luis~A. Caffarelli, Robert~V. Kohn, and Louis Nirenberg, \emph{First order
		interpolation inequalities with weights}, Compositio Math. \textbf{53}
	(1984), no.~3, 259--275. \MR{768824 (86c:46028)}
	
	\bibitem{CPY2012}
	Daomin Cao, Shuangjie Peng, and Shusen Yan, \emph{Infinitely many solutions for
		{$p$}-{L}aplacian equation involving critical {S}obolev growth}, J. Funct.
	Anal. \textbf{262} (2012), no.~6, 2861--2902. \MR{2885967}
	
	\bibitem{Cas2016-2}
	Hern\'an Castro, \emph{Hardy-{S}obolev-type inequalities with monomial
		weights}, Ann. Mat. Pura Appl. (4) \textbf{196} (2017), no.~2, 579--598.
	\MR{3624966}
	
	\bibitem{Cas2021}
	Hern\'{a}n Castro, \emph{Extremals for {H}ardy-{S}obolev type inequalities with
		monomial weights}, J. Math. Anal. Appl. \textbf{494} (2021), no.~2, 124645,
	31. \MR{4158747}
	
	\bibitem{CasCo2022}
	Hernán Castro and Marco Cornejo, \emph{{P}oincaré's inequality and {S}obolev
		spaces with monomial weights}, Mathematische Nachrichten \textbf{296} (2023),
	no.~10, 4500--4522.
	
	\bibitem{ChaWhe1986}
	Sagun Chanillo and Richard~L. Wheeden, \emph{Harnack's inequality and
		mean-value inequalities for solutions of degenerate elliptic equations},
	Comm. Partial Differential Equations \textbf{11} (1986), no.~10, 1111--1134.
	\MR{847996}
	
	\bibitem{DaSc2006}
	Lucio Damascelli and Berardino Sciunzi, \emph{Harnack inequalities, maximum and
		comparison principles, and regularity of positive solutions of
		{$m$}-{L}aplace equations}, Calc. Var. Partial Differential Equations
	\textbf{25} (2006), no.~2, 139--159. \MR{2188744}
	
	\bibitem{dFLN82}
	Djairo~Guedes de~Figueiredo, Pierre-Louis Lions, and Roger~D. Nussbaum, \emph{A
		priori estimates and existence of positive solutions of semilinear elliptic
		equations}, J. Math. Pures Appl. (9) \textbf{61} (1982), no.~1, 41--63.
	\MR{664341 (83h:35039)}
	
	\bibitem{DeGio1957}
	Ennio De~Giorgi, \emph{Sulla differenziabilit\`a e l'analiticit\`a delle
		estremali degli integrali multipli regolari}, Mem. Accad. Sci. Torino. Cl.
	Sci. Fis. Mat. Nat. (3) \textbf{3} (1957), 25--43. \MR{0093649}
	
	\bibitem{FaKeSe1982}
	Eugene~B. Fabes, Carlos~E. Kenig, and Raul~P. Serapioni, \emph{The local
		regularity of solutions of degenerate elliptic equations}, Comm. Partial
	Differential Equations \textbf{7} (1982), no.~1, 77--116. \MR{643158}
	
	\bibitem{Fer2006}
	Fausto Ferrari, \emph{Harnack inequality for two-weight subelliptic
		{$p$}-{L}aplace operators}, Math. Nachr. \textbf{279} (2006), no.~8,
	815--830. \MR{2228656}
	
	\bibitem{Fra1991}
	Bruno Franchi, \emph{Weighted {S}obolev-{P}oincar\'{e} inequalities and
		pointwise estimates for a class of degenerate elliptic equations}, Trans.
	Amer. Math. Soc. \textbf{327} (1991), no.~1, 125--158. \MR{1040042}
	
	\bibitem{FraHow2020}
	Jonathan~M. Fraser and Douglas~C. Howroyd, \emph{On the upper regularity
		dimensions of measures}, Indiana Univ. Math. J. \textbf{69} (2020), no.~2,
	685--712. \MR{4084184}
	
	\bibitem{Garain2020}
	Prashanta Garain, \emph{Properties of solutions to some weighted
		{$p$}-{L}aplacian equation}, Opuscula Math. \textbf{40} (2020), no.~4,
	483--494. \MR{4129761}
	
	\bibitem{GS1981}
	B.~Gidas and J.~Spruck, \emph{Global and local behavior of positive solutions
		of nonlinear elliptic equations}, Comm. Pure Appl. Math. \textbf{34} (1981),
	no.~4, 525--598. \MR{615628}
	
	\bibitem{GT01}
	David Gilbarg and Neil~S. Trudinger, \emph{Elliptic partial differential
		equations of second order}, Classics in Mathematics, Springer-Verlag, Berlin,
	2001, Reprint of the 1998 edition. \MR{1814364 (2001k:35004)}
	
	\bibitem{Gut1989}
	Cristian~E. Guti\'{e}rrez, \emph{Harnack's inequality for degenerate
		{S}chr\"{o}dinger operators}, Trans. Amer. Math. Soc. \textbf{312} (1989),
	no.~1, 403--419. \MR{948190}
	
	\bibitem{HaKo2000}
	Piotr Haj\l~asz and Pekka Koskela, \emph{Sobolev met {P}oincar\'{e}}, Mem.
	Amer. Math. Soc. \textbf{145} (2000), no.~688, x+101. \MR{1683160}
	
	\bibitem{Hei2001}
	Juha Heinonen, \emph{Lectures on analysis on metric spaces}, Universitext,
	Springer-Verlag, New York, 2001. \MR{1800917}
	
	\bibitem{HeKiMa2006}
	Juha Heinonen, Tero Kilpel\"{a}inen, and Olli Martio, \emph{Nonlinear potential
		theory of degenerate elliptic equations}, Dover Publications, Inc., Mineola,
	NY, 2006, Unabridged republication of the 1993 original. \MR{2305115}
	
	\bibitem{Lin2006}
	Peter Lindqvist, \emph{Notes on the {$p$}-{L}aplace equation}, Report.
	University of Jyv\"{a}skyl\"{a} Department of Mathematics and Statistics,
	vol. 102, University of Jyv\"{a}skyl\"{a}, Jyv\"{a}skyl\"{a}, 2006.
	\MR{2242021}
	
	\bibitem{Lions82}
	Pierre-Louis Lions, \emph{On the existence of positive solutions of semilinear
		elliptic equations}, SIAM Rev. \textbf{24} (1982), no.~4, 441--467.
	\MR{678562 (84a:35093)}
	
	\bibitem{MRW2012}
	D.~D. Monticelli, S.~Rodney, and R.~L. Wheeden, \emph{Boundedness of weak
		solutions of degenerate quasilinear equations with rough coefficients},
	Differential Integral Equations \textbf{25} (2012), no.~1-2, 143--200.
	\MR{2906551}
	
	\bibitem{MRW2015}
	\bysame, \emph{Harnack's inequality and {H}\"{o}lder continuity for weak
		solutions of degenerate quasilinear equations with rough coefficients},
	Nonlinear Anal. \textbf{126} (2015), 69--114. \MR{3388872}
	
	\bibitem{Muck1972}
	Benjamin Muckenhoupt, \emph{Weighted norm inequalities for the {H}ardy maximal
		function}, Trans. Amer. Math. Soc. \textbf{165} (1972), 207--226. \MR{293384}
	
	\bibitem{MuckWhee1974}
	Benjamin Muckenhoupt and Richard Wheeden, \emph{Weighted norm inequalities for
		fractional integrals}, Trans. Amer. Math. Soc. \textbf{192} (1974), 261--274.
	\MR{340523}
	
	\bibitem{MS68}
	M.~K.~Venkatesha Murthy and Guido Stampacchia, \emph{Boundary value problems
		for some degenerate-elliptic operators}, Ann. Mat. Pura Appl. (4) \textbf{80}
	(1968), 1--122. \MR{0249828 (40 \#3069)}
	
	\bibitem{Sci2016}
	Berardino Sciunzi, \emph{Classification of positive
		{$\mathcal{D}^{1,p}(\mathbb{R}^N)$}-solutions to the critical {$p$}-{L}aplace
		equation in {$\mathbb{R}^N$}}, Adv. Math. \textbf{291} (2016), 12--23.
	\MR{3459013}
	
	\bibitem{Serrin1964}
	James Serrin, \emph{Local behavior of solutions of quasi-linear equations},
	Acta Math. \textbf{111} (1964), 247--302. \MR{170096}
	
	\bibitem{Tru1971}
	Neil~S. Trudinger, \emph{On the regularity of generalized solutions of linear,
		non-uniformly elliptic equations}, Arch. Rational Mech. Anal. \textbf{42}
	(1971), 50--62. \MR{344656}
	
	\bibitem{Vet2016}
	J\'{e}r\^{o}me V\'{e}tois, \emph{A priori estimates and application to the
		symmetry of solutions for critical {$p$}-{L}aplace equations}, J.
	Differential Equations \textbf{260} (2016), no.~1, 149--161. \MR{3411668}
	
	\bibitem{Zam2002}
	Pietro Zamboni, \emph{H\"{o}lder continuity for solutions of linear degenerate
		elliptic equations under minimal assumptions}, J. Differential Equations
	\textbf{182} (2002), no.~1, 121--140. \MR{1912072}
	
\end{thebibliography}
\end{document}